\newtheorem{lem}{Lemma}
\newtheorem{thm}[lem]{Theorem}
\newtheorem{cor}[lem]{Corollary}
\newtheorem{defi}[lem]{Definition}
\newtheorem{con}[lem]{Conjecture}
\begin{document}
	
	\title{Hamiltonian cycles in $ 15 $-tough ($ P_{3}\cup 3P_{1} $)-free graphs}

	\author{Hui Ma, Lili Hao, Weihua Yang\footnote{Corresponding author. E-mail: ywh222@163.com; yangweihua@tyut.edu.cn}\\
		\\ \small Department of Mathematics, Taiyuan University of
		Technology,\\
		\small  Taiyuan Shanxi-030024,
		China\\
	}
	
	\date{}
	\maketitle
	
	\emph{\textbf{Abstract.}} 
	A graph $ G $ is called $ t $-tough if $  \left|S\right|\geq t\cdot w\left(G-S\right)$ for every cutset $ S $ of $G$.  Chv\'atal conjectured that there exists a constant $ t_{0} $ such that every $ t_{0} $-tough graph has a hamiltonian cycle. Gao and Shan have proved that every $7$-tough $(P_{3}\cup 2P_{1})$-free grah is hamiltonian. 
	In this paper, we confirm this conjecture for $ (P_{3}\cup 3P_{1}) $-free graphs.

	\vskip 0.5cm  \emph{\textbf{Keywords.}} Toughness; hamiltonian cycles;  $(P_{3}\cup 3P_{1}) $-free graphs
	
	\section{Introduction}

	All graphs considered  in the paper are finite, undirected and simple. Let $ V(G) $ and $ E(G) $  denote the vertex set and the edge set of the graph $G$, respectively. For a vertex $ v\in V(G) $, let $  N_{G}(v) $ denote the set of neighbors of $ x $ in $ G $. Let  $ \delta(G)=\min\left\{\left|N_{G}(v): v\in V(G)\right|\right\} $. We denote by $ \alpha(G) $ the independence number of $ G $ and by $ \kappa(G) $ the  connectivity of $ G $. For any set $ X\subseteq V(G) $, we denote by $ G[X] $ the induced subgraph on $ X $, by $ G-X $ the subgraph $ G[V(G)\backslash X] $, and $ w(G-X) $ denotes the number of connected components of $ G-X $. For any two sets $ X, Y\subseteq V(G)$ or any two subgraphs $ X, Y$  of $ G $,  $ N_{X}(Y) $ denotes the set of all vertices in $X-Y$ adjacent to some vertices in $ Y $, and $E[X, Y]$ denotes the set of edges with one end in $X$ and the other in $Y$.  Let $ Q $ be a cycle or a path of $G$. Assume $ Q $ has a given direction. For $ x\in V(Q) $, $x^{+}$ and $x^{-}$ denote its successor and prodecessor, respectively. For $x, y\in V(Q)$,  $x\mathop{Q}\limits^{\rightarrow}y$ denotes the path  from $x$ to $y$ following  the given direction of $Q$. Likewise, $x\mathop{Q}\limits^{\leftarrow}y$ denotes the inverse path from $x$ to $y$. For a subgraph $H$ of $G$ and a subset $U$ of $V(G)$, if each vertex in $U$ is adjacent to all the vertices in $H$, we write $U\sim H$;  otherwise, we write $U\nsim H$. If $U=\left\{u\right\}$, we write $u\sim H$ and $u\nsim H$ for $U\sim H$ and $U\nsim H$, respectively. For two integers $a$ and $b$ with $a\leq b$, define $[a, b]=\left\{i\in \mathbb{Z} : a\leq i\leq b\right\}$.



\begin{defi}
	Let $ t $ be a nonnegative real number.	A graph $ G $ is called $ t $-tough if $ \lvert S\rvert\geq t\cdot w(G-S) $ holds for every cutset $ S $ of $ G $. The toughness of a graph $ G $, denoted by $ \tau(G) $, is the largest $ t $ such that $ G $ is $ t $-tough, if $ G $ is noncomplete, and is $ +\infty $ if $G$ is complete.		\end{defi}

It is easy to see that every $t$-tough noncomplete graph $ G $ is $\lceil 2t\rceil $-connected and every hamiltonian graph is 1-tough.  In 1973,  Chv\'atal posed a conjecture.

\begin{con}[\cite{Chvatal}]\label{C1}
	There exists a constant $ t_{0} $ such that every $ t_{0} $-tough graph on at least three vertices is hamiltonian.
\end{con}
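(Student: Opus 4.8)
The plan is to attack the conjecture by a contradiction argument built around a longest cycle, trying to convert the global toughness hypothesis into a contradiction through the structure of the cycle's complement. Suppose $G$ is $t_{0}$-tough on at least three vertices but admits no hamiltonian cycle, and let $C$ be a longest cycle of $G$ with a fixed cyclic orientation. Non-hamiltonicity means $H:=G-V(C)$ is nonempty; fix a component $D$ of $H$. Since $G$ is $t_{0}$-tough it is $\lceil 2t_{0}\rceil$-connected, so the attachment set $S:=N_{C}(D)$ is large, and maximality of $C$ forbids many local rerouting patterns. The aim is to show that these maximality constraints, combined with the toughness inequality applied to a cutset assembled from the attachment points, are jointly impossible once $t_{0}$ is taken large enough.

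First I would record the local consequences of maximality via the standard rotation and hopping-lemma machinery: for each $v\in D$ and each $x\in N_{C}(v)$ the successor $x^{+}$ is nonadjacent to $v$, and more generally the successors of vertices in $S$ form a set with no edges into $D$, for otherwise $C$ could be lengthened through $D$. Next I would try to extract from $C$ a family of pairwise ``non-insertible'' segments, one gap lying strictly between each pair of consecutive attachment vertices, arranged so that deleting $S$ separates these gaps from one another and from $D$. If that separation held, then $G-S$ would have at least $\lvert S\rvert$ gap-components together with $D$, giving $w(G-S)\geq\lvert S\rvert+1$, and the toughness inequality $\lvert S\rvert\geq t_{0}\cdot w(G-S)$ would force $\lvert S\rvert\geq t_{0}(\lvert S\rvert+1)$, which is impossible for $t_{0}\geq 1$.

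The hard part, and precisely the reason the conjecture remains open in full generality, is the middle step: guaranteeing that the gaps around distinct attachment vertices survive as distinct components of $G-S$, that is, ruling out long-range chords of $C$ that re-knit several gaps into one component and collapse the count $w(G-S)$. In restricted families such as $(P_{3}\cup 3P_{1})$-free graphs one forbids exactly the induced configurations that would permit such chords, which is why a finite toughness constant can be pinned down there; for arbitrary graphs no mechanism is known to suppress these chords, so the component count cannot be driven above $\lvert S\rvert$ by toughness alone. Moreover, the non-hamiltonian graphs of Bauer, Broersma and Veldman, whose toughness approaches $9/4$, show that any successful argument must tolerate toughness up to at least that value while still producing the contradiction, so the crude inequality above is far from sufficient. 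I therefore expect the decisive obstacle to be the design of a genuinely new tool that turns the global toughness hypothesis into the \emph{local independence} of the gap-components, and it is this gap, rather than any routine estimate, that currently blocks a complete proof.
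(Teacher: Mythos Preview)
The statement you are addressing is Chv\'atal's toughness conjecture, stated in the paper as Conjecture~\ref{C1}. The paper does \emph{not} prove this conjecture; it is quoted from~\cite{Chvatal} as motivation, and it remains open in full generality. What the paper actually proves is Theorem~\ref{result 12}, namely that the conjecture holds within the restricted class of $(P_{3}\cup 3P_{1})$-free graphs with $t_{0}=15$. There is therefore no ``paper's own proof'' of this statement to compare against.

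Your proposal is not really a proof attempt but an outline of why the standard longest-cycle/hopping approach stalls, and your diagnosis is accurate: the obstruction is precisely that chords of $C$ can merge the gap segments so that deleting the attachment set $S$ does not create enough components, and no general mechanism is known to prevent this. You also correctly note the Bauer--Broersma--Veldman constructions showing toughness near $9/4$ is not enough. This is all sound commentary, but it is an explanation of an open problem, not a proof. If your intention was to prove the theorem the paper actually establishes (Theorem~\ref{result 12}), you would need to exploit the $(P_{3}\cup 3P_{1})$-free hypothesis directly, as the paper does through the structural Lemma~\ref{result 0} and the sequence of lemmas that follow; the forbidden-subgraph condition is exactly what supplies the missing control over how components of $G-S$ can interact.
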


A graph is said to be  $ R $-\textit{free} if it contains no copy of  $ R $ as an  induced subgraph. Take $\left(R_{1}\cup R_{2}\right)$ as the vertex-disjoint union of two graphs $R_{1}$ and $R_{2}$, and $kP_{1}$ as $k$ isolatd vertices. Recent research has confirmed the conjecture  for many classes of $R$-free graphs including $2K_{2}$-free graphs \cite{Broersma 2, Shan2020, Ota}, $(P_{2}\cup P_{3})$-free graphs \cite{Shan}, $(P_{2}\cup kP_{1})$-free graphs \cite{Shi, Xu},  $(P_{4}\cup P_{1})$-free graphs \cite{Shan3} and $ (P_{3}\cup \ell P_{1}) $-free graphs for $\ell\leq 2$ \cite{Li, Gao}. Note that when $\left|R\right|\geq 6$, there are very few results proving the conjecture \ref{C1} for $R$-free graphs. In this paper, we prove that the conjecture \ref{C1} is true for $(P_{3}\cup 3P_{1})$-free graphs.

\begin{thm}\label{result 12}
	Every $ 15 $-tough $ (P_{3}\cup 3P_{1}) $-free graph on at least 3 vertices is hamiltonian.
\end{thm}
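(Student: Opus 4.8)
The plan is to argue by contradiction, along the lines of the longest-cycle method used for the $(P_{3}\cup 2P_{1})$-free case. Suppose $G$ is a $15$-tough $(P_{3}\cup 3P_{1})$-free graph on at least three vertices with no hamiltonian cycle. Since a $t$-tough noncomplete graph is $\lceil 2t\rceil$-connected, $G$ is $30$-connected, so $\delta(G)\ge 30$ and $G$ has a cycle; fix a longest cycle $C$ and orient it. As $G$ is not hamiltonian, $R:=V(G)\setminus V(C)\neq\emptyset$. Pick a component $H$ of $G[R]$ and set $A:=N_{C}(H)$. The standard crossing arguments for a longest cycle apply: if $x,y\in A$ had $x^{+}y^{+}\in E(G)$, or if some vertex of $A^{+}:=\{x^{+}:x\in A\}$ had a neighbour in $H$, we could reroute $C$ through a path inside $H$ to obtain a longer cycle; hence $A^{+}$ is an independent set, $A^{+}\cap A=\emptyset$, and no vertex of $A^{+}$ has a neighbour in $H$, and symmetrically for $A^{-}$. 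Also $A$ is a cutset, so $|A|\ge\kappa(G)\ge 30$, and each segment of $C$ between cyclically consecutive vertices of $A$ has at least $|V(H)|$ internal vertices.

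The first genuine use of the hypothesis shows that every component of $G[R]$ is a clique. If a component $H$ contained an induced $P_{3}$ on $\{a,b,c\}$, then the associated set $A^{+}$ is an independent set of size at least $30$ lying on $C$, hence disjoint from $\{a,b,c\}$ and with no neighbour in $\{a,b,c\}$; any three of its vertices together with $a,b,c$ form an induced $P_{3}\cup 3P_{1}$, a contradiction. Since a connected $P_{3}$-free graph is complete, each component of $G[R]$ is a clique; in particular, if $H=\{h\}$ then $A=N_{G}(h)$.

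The next step extracts dense adjacency from the hypothesis. For $a\in A$, choose $h\in H$ with $ha\in E(G)$; then $\{h,a,a^{+}\}$ induces a $P_{3}$ (as $aa^{+},ha\in E(G)$ but $ha^{+}\notin E(G)$), so after deleting $\{h,a,a^{+}\}$ and all of their neighbours the remaining graph has independence number at most $2$. Because $A^{+}$ is independent, disjoint from $\{h,a\}$, and anticomplete to $H$, the only vertices of $A^{+}$ removed by this deletion are those in $N_{G}(a)$; hence $a$ is non-adjacent to at most two vertices of $A^{+}$, and symmetrically of $A^{-}$. Running the same scheme with singletons of the other components of $G[R]$ and with interior vertices of the segments of $C$ in the role of the three ``free'' vertices should yield a package of structural restrictions: the bipartite graphs between $A$ and $A^{+}$ and between $A$ and $A^{-}$ are almost complete, the number of components of $G[R]$ is bounded, and the chords of $C$ joining distinct segments are constrained.

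The endgame is to feed this structure into the toughness condition and exhibit a cutset $S$ with $|S|<15\cdot w(G-S)$. The natural attempt is to take $S$ to be $A$, or $A$ together with a small set of vertices that cuts the chords of $C$ linking different segments, so that $G-S$ retains the components of $G[R]$ together with many pieces of $C$ as distinct components; the arithmetic of $|S|$ against the number of such pieces is where the constant $15$ (roughly twice the $7$ for $(P_{3}\cup 2P_{1})$-free graphs, absorbing the extra isolated vertex) is spent. I expect this last step to be the real difficulty: the previous paragraph already shows $A$ is almost completely joined to $A^{+}$, so the chords of $C$ are plentiful and one cannot simply delete $A$ and count segments; indeed the obvious large independent set $A^{+}\cup\{h\}$ only yields $n\ge 16(|A|+1)$ from toughness, not a contradiction. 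The argument must therefore branch according to whether $G[R]$ has one component or several, whether the components are singletons, and how densely the segments of $C$ are cross-linked, and in each branch locate the cutset that breaks $15$-toughness.
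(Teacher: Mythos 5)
Your proposal is not a proof: it is a plan whose decisive step is missing, and you say so yourself. The preliminary structure you derive from a longest cycle $C$ --- that the components of $G[R]$ are cliques, that $A^{+}$ is independent and anticomplete to $H$, and that each $a\in A$ misses at most two vertices of $A^{+}$ --- is correct but is the routine part of any such argument; the entire difficulty of the theorem lies in the ``endgame'' you leave open, namely producing either a longer cycle or a cutset $S$ with $|S|<15\,w(G-S)$. As you yourself observe, the natural candidates fail: $A$ cannot be deleted and counted against the segments because $A$ is almost completely joined to $A^{+}$, so the chords of $C$ prevent the segments from splitting into many components, and the independent set $A^{+}\cup\{h\}$ only gives a harmless lower bound on $n$. ``The argument must therefore branch \dots and in each branch locate the cutset'' is precisely the content of the theorem, not a proof of it, and there is no evidence in your sketch that the longest-cycle/rotation framework can be pushed through for $(P_{3}\cup 3P_{1})$-free graphs without substantial further machinery.

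For comparison, the paper does not use a longest cycle at all. It first invokes an Ore-type theorem for tough graphs to reduce to the case of two nonadjacent vertices $u,v$ with $d(u)+d(v)\le \frac{n}{8}-2$, takes a proper cutset $S$ extracted from $N_{G}(uv)$, and exploits the structure of $(P_{3}\cup kP_{1})$-free graphs relative to a cutset (Lemma \ref{result 0}), generalized $K_{1,2s}$-matchings centered at the components (Lemma \ref{result 6}), path covers of $(P_{3}\cup 2P_{1})$-free graphs (Lemma \ref{result 9}), hamiltonian connectivity of more-than-$1$-tough $(P_{3}\cup P_{1})$-free graphs (Lemma \ref{result 5}), and a vertex-insertion lemma (Lemma \ref{result 4}). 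The technical core is Lemma \ref{result 8}, which either yields hamiltonicity directly or produces a clique $Q_{1}$ with $|V(Q_{1})|-2|N_{G}(Q_{1})|\ge 2$; the proof then builds a system of disjoint paths with ends in an auxiliary set $S^{\ast}$ covering everything outside $Q_{1}\cup S^{\ast}$ and closes these into a hamiltonian cycle. None of this machinery, nor any substitute for it, appears in your sketch, so the gap is not a matter of omitted routine detail but of the main argument being absent.
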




\section{Proof of Theorem \ref{result 12}}
By the structure of $(P_{3}\cup kP_{1})$-free graphs, we can obtain the following lemma.
\begin{lem}\label{result 0}
	Let $k\geq 1$ be an integer, $ G $ be a $ (P_{3}\cup kP_{1}) $-free graph, and $S\subseteq V(G)$. Let \( S_1  \) denote the set of vertices in \( S \) that are adjacent to vertices in exactly one component of \( G - S \), and \( S_2  \) denote the set of vertices in \( S \) that are adjacent to vertices in at least two components of \( G - S \). 
	
	{\rm(i)} Assume $ w\left(G-S\right)=k $. Then at most one component of $ G-S $ is noncomplete. Moreover, if $k\geq 2$ and $G-S$ has a noncomplete component, then the noncomplete component of $G-S$ is $(P_{3}\cup P_{1})$-free.
	
	{\rm(ii)} If $ w(G-S)\geq k+1 $, then each component of $ G-S $ is complete and
 each vertex in $ S_{1}\cup S_{2} $ is adjacent to all the vertices in some component of $ G-S $.
	
	{\rm(iii)} Assume $ w\left(G-S\right)\geq k+2 $. Then $ w(G-S_{2})\geq w(G-S) $ and each vertex in $ S_{2} $ is adjacent to all the vertices in at least $ w\left(G-S_{2}\right)-k+1 $ components of $ G-S_{1} $. Consequently, any two vertices in $S_{2}$ are adjacent to all the vertices in $ w\left(G-S\right)-2(k-1) $ common components of $G-S$.
\end{lem}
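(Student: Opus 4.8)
My plan is to prove all three parts by the same device: a $(P_3\cup kP_1)$-free graph cannot contain an induced $P_3$ together with $k$ pairwise non-adjacent vertices that are all non-adjacent to that $P_3$, so I assume a conclusion fails and build exactly such a configuration. Two trivial facts do almost all of the work: (a) a connected noncomplete graph has an induced $P_3$; and (b) if a vertex $v$ has a neighbour $a$ and a non-neighbour $b$ inside a common clique (so $a$ and $b$ are adjacent), then $v,a,b$ induce a $P_3$. In every case the $k$ isolated vertices are collected at a rate of at most one per component of $G-S$, each chosen to be a non-neighbour of the three vertices of the induced $P_3$.

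For (i), if $w(G-S)=k$ and a component $C_1$ is noncomplete, it contains an induced $P_3$ by (a); were a second component also noncomplete, it would contain two non-adjacent vertices, and these together with the $P_3$ and one vertex from each of the remaining $k-2$ components would be an induced $P_3\cup kP_1$, a contradiction. The ``moreover'' part is the same count, using an induced $P_3\cup P_1$ inside the unique noncomplete component in place of a $P_3$ inside one component and an isolated vertex inside another. For (ii), if $w(G-S)\ge k+1$ then no component can be noncomplete (its $P_3$ plus one vertex from each of $k$ other components would be forbidden), so every component is complete; and if $v\in S_1\cup S_2$ had a non-neighbour in every component $C_1,\dots,C_m$ it touches, then, fixing a neighbour $a_1$ and non-neighbour $b_1$ in $C_1$, the triple $v,a_1,b_1$ is an induced $P_3$, and the non-neighbours $b_2,\dots,b_m$ together with one vertex from each component $v$ does not touch form $w(G-S)-1\ge k$ pairwise non-adjacent vertices non-adjacent to $\{v,a_1,b_1\}$, a contradiction; hence $v$ is adjacent to every vertex of some component.

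For (iii), split $S=S_0\cup S_1\cup S_2$, with $S_0$ the vertices adjacent to no component of $G-S$. To see $w(G-S_2)\ge w(G-S)$, I show that no vertex of $S_0\cup S_1$ bridges two distinct components of $G-S$ inside $G-S_2$: choosing two such components joined in $G-S_2$ by a path internally disjoint from all components, that path leaves a component $C_i$ via some $u\in S_1$, which is adjacent to all of $C_i$ by (ii), and its next internal vertex $x\in S_0\cup S_1$ is (by minimality) non-adjacent to the exit vertex of $C_i$, hence non-adjacent to all of $C_i$ since $x$ is adjacent to all or none of $C_i$; then for $c_i\in C_i$ the triple $c_i,u,x$ induces a $P_3$, and one vertex from each of the $\ge w(G-S)-2\ge k$ components touched by neither $u$ nor $x$ completes a forbidden configuration. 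So each component of $G-S$ lies in its own component of $G-S_2$. The heart of (iii) is: every $v\in S_2$ is adjacent to all vertices of all but at most $k-1$ components of $G-S$. Suppose $v$ fails this for $k$ distinct components, with non-neighbour $b_\ell$ in the $\ell$-th. If $v$ is adjacent to all of two distinct components $C_f,C_g$, then for $a_f\in C_f$, $a_g\in C_g$ the triple $a_f,v,a_g$ is an induced $P_3$ and $b_1,\dots,b_k$ complete $P_3\cup kP_1$; hence $v$ is adjacent to all of exactly one component $C_f$ (at least one by (ii)). Since $v\in S_2$ it touches a second component $C_i$, in which it has a neighbour $a_i$ and non-neighbour $b_i$, so $v,a_i,b_i$ induce a $P_3$; as $v$ is not adjacent to all of any of the $w(G-S)-1\ge k+1$ components other than $C_f$, a non-neighbour of $v$ from each of the $w(G-S)-2\ge k$ of these other than $C_i$ again yields the forbidden graph. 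This proves the claim, and the ``consequently'' is immediate (two vertices of $S_2$ each fail to dominate at most $k-1$ components, so they are simultaneously adjacent to all vertices of at least $w(G-S)-2(k-1)$ common components); the reformulation in terms of components of $G-S_1$ and $w(G-S_2)$ I expect to follow by matching each component of $G-S$ that $v$ dominates to the component of $G-S_1$ containing it.

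The real difficulty is entirely in the counting in (iii): after one component of $G-S$ has been absorbed into the induced $P_3$, one must still be left with at least $k$ pairwise non-adjacent non-neighbours of the distinguished vertex of $S$, and this is exactly why (iii) needs $w(G-S)\ge k+2$ whereas $k+1$ suffices for (ii). The step I would most carefully check is the $S_2$ case analysis above — in particular ruling out that a vertex of $S_2$ is adjacent to all of exactly one component — and the bookkeeping needed to state the bound in the $G-S_1$/$w(G-S_2)$ form rather than the cruder ``components of $G-S$'' form.
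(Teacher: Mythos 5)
The paper never actually proves this lemma --- it is stated as an immediate consequence of the structure of $(P_{3}\cup kP_{1})$-free graphs --- so there is no official argument to compare against; judged on its own, your proof is correct in all the places where the lemma carries real content, and it is surely the argument the authors intend. Your two facts (a connected noncomplete graph contains an induced $P_3$; a vertex with a neighbour and a non-neighbour inside a clique creates an induced $P_3$) plus the ``one isolated vertex per untouched component'' count do give (i), the completeness and domination statements in (ii), and the two substantive claims in (iii): the counts check out ($2+(k-2)$, $1+(k-1)$, $w(G-S)-1\geq k$, $w(G-S)-2\geq k$ respectively), and in the bridging argument for $w(G-S_{2})\geq w(G-S)$ the shortest-path choice correctly forces the second internal vertex $x$ to miss $C_i$ entirely, using the all-or-nothing behaviour of $S_{0}\cup S_{1}$ vertices from (ii). One point you leave implicit and should state: the connecting path has at least two internal vertices, since a single internal vertex would be adjacent to two components of $G-S$ and hence lie in $S_{2}$, which internal vertices avoid. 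Your case split in the main claim of (iii) (either $v$ fully dominates two components, or exactly one) is sound, and the ``consequently'' clause follows at once from it.

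The one genuine mismatch is the middle clause of (iii) as literally printed: ``adjacent to all the vertices in at least $w(G-S_{2})-k+1$ components of $G-S_{1}$.'' Your closing remark that this ``follows by matching each dominated component of $G-S$ to the component of $G-S_{1}$ containing it'' does not work: dominating a component $D$ of $G-S$ says nothing about the $S_{0}$, $S_{2}$ and other $G-S$ vertices merged with $D$ inside $G-S_{1}$, and in fact a vertex $v\in S_{2}$ is itself a vertex of $G-S_{1}$, so all of its neighbours lie in the single component of $G-S_{1}$ containing $v$ and it cannot dominate several components of $G-S_{1}$ at all. So the clause as printed appears to be a misstatement rather than something you failed to prove; what you did prove --- $w(G-S_{2})\geq w(G-S)$, each vertex of $S_{2}$ dominates at least $w(G-S)-k+1$ components of $G-S$, and the common-domination consequence --- is exactly the form in which (iii) is invoked everywhere else in the paper (e.g.\ in the proofs of Lemmas \ref{result 6}, \ref{result 9} and \ref{result 8}), so your argument supports all later uses.
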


\begin{lem}[\cite{V}]\label{result 1}
	Let $ G $ be a bipartite graph with partite sets $ X $ and 	$ Y$, and $ f $ be a function from $ X $ to the set of positive integers. If for every $ S\subseteq X $, $ \left|N_{G}(S)\right|\geq \sum_{v\in S}f(v) $, then $ G $ has a subgraph $ H $ such that $ X\subseteq V(H) $, $ d_{H}(v)=f(v) $ for every $ v\in X $, and $ d_{H}(u)=1 $ for every $ u\in Y\cap V(H) $. 
\end{lem}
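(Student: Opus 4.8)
The plan is to deduce the statement from the classical Hall marriage theorem by a standard vertex-splitting (blow-up) construction. First I would build an auxiliary bipartite graph $G'$ on parts $X'$ and $Y$ as follows: replace each vertex $v\in X$ by $f(v)$ pairwise distinct copies $v^{(1)},\dots,v^{(f(v))}$, set $X'=\{v^{(i)}: v\in X,\ 1\le i\le f(v)\}$, and join each copy $v^{(i)}$ to every vertex of $N_G(v)$ in $Y$ (so all copies of $v$ inherit the same neighborhood). Since each $f(v)$ is a positive integer, $X'$ is well defined and $|X'|=\sum_{v\in X}f(v)$.

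Next I would verify that $G'$ satisfies Hall's condition, i.e. $|N_{G'}(S')|\ge|S'|$ for every $S'\subseteq X'$. Given such an $S'$, let $S=\{v\in X:\ v^{(i)}\in S'\ \text{for some }i\}$ be the set of originals appearing in $S'$. Because every copy of $v$ has neighborhood exactly $N_G(v)$, one has $N_{G'}(S')=N_G(S)$, while $|S'|\le\sum_{v\in S}f(v)$ because $S'$ contains at most $f(v)$ copies of each $v\in S$. Combining these with the hypothesis $|N_G(S)|\ge\sum_{v\in S}f(v)$ gives $|N_{G'}(S')|=|N_G(S)|\ge\sum_{v\in S}f(v)\ge|S'|$, which is precisely Hall's condition for $G'$.

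By Hall's theorem there is then a matching $M'$ of $G'$ saturating $X'$, and I would translate $M'$ back into the desired subgraph $H$ of $G$. For each $v\in X$ its $f(v)$ copies are matched under $M'$ to $f(v)$ \emph{distinct} vertices $y_1,\dots,y_{f(v)}\in N_G(v)$, and I place the edges $vy_1,\dots,vy_{f(v)}$ into $H$. Then $d_H(v)=f(v)$ for every $v\in X$, and $X\subseteq V(H)$ since each such $v$ has positive degree (here positivity of $f$ is used). Finally, because $M'$ is a matching no vertex of $Y$ is covered twice, so each $u\in Y\cap V(H)$ lies on exactly one edge of $H$, giving $d_H(u)=1$, as required.

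I expect no genuine obstacle: the result is a routine degree-prescribed strengthening of Hall's theorem, and the only points needing care are purely bookkeeping — checking that the neighborhoods of the copies collapse correctly to $N_G(S)$ so that the blown-up Hall condition follows from the given one, and noting that positivity of $f$ guarantees every vertex of $X$ actually appears in $H$. The same conclusion can alternatively be obtained by an integral max-flow computation on the network formed by adding a source that feeds $f(v)$ units into each $v\in X$ and a sink that draws one unit from each $y\in Y$, but the Hall reduction above is the most economical route.
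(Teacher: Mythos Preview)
Your argument is correct: the blow-up reduction to Hall's theorem is the standard way to prove this degree-constrained matching statement, and your bookkeeping (that $N_{G'}(S')=N_G(S)$ and $|S'|\le\sum_{v\in S}f(v)$) is exactly what is needed.

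There is nothing to compare against, however: the paper does not prove this lemma at all. It is stated with a citation to the literature and used as a black box throughout (in the proof of Lemma~\ref{result 6} and in the construction of the $K_{1,2}$-matching after Claim~1 in the proof of Theorem~\ref{result 12}). So your proposal supplies a proof where the paper simply imports the result.
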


A $ K_{1, s} $-\textit{matching} in a graph is a set of vertex-disjoint copies of $ K_{1, s} $. The vertices of degree at least 2 in such a matching are called \textit{centers.}
Let \( G \) be a graph, \( X \subseteq V(G) \) be nonempty, and \( Y \subseteq N_G(X) \). For some integer \( s \geq 1 \), \( G[X, Y] \) is defined as a \textit{generalized} \( K_{1, 2s} \) with center \( X \) if \( |Y| = 2s \), and if \( |X| \geq 2 \), there exist partitions \( \{X_1, X_2\} \) of \( X \) and \( \{Y_1, Y_2\} \) of \( Y \) such that \( Y_i \subseteq N_G(X_i) \cap Y \) and \( |Y_i| = s \)  for each \( i \in \{1, 2\} \). Here, vertices in \( Y \) are called \textit{partners} of vertices in \( X \). Let \( S \) be a cutset of \( G \), and let \( D_1, D_2, \ldots, D_{\ell} \) be the components of \( G - S \) for some integer \( \ell \geq 2 \). For any \( i \in [1, \ell] \), if there exists a subset \( S_i \subseteq N_G(D_i) \cap S \) such that \( G[V(D_i), S_i] \) is a generalized \( K_{1, 2s} \) with \( V(D_i) \) as its center, and if \( S_i \cap S_j = \emptyset \) for distinct \( i, j \in [1, \ell] \), then \( G \) is said to have a \textit{generalized \( K_{1, 2s} \)-matching} centered at the components of \( G - S \). In the following, we call $S$ a \textit{proper cutset} of $G$ if each vertex in $S$ has neighbors in at least two components of $G-S$.	

\begin{lem}\label{result 6}
Let $ G $ be a  $ t $-tough $ (P_{3}\cup 3P_{1}) $-free graph for some $ t\geq 2 $, $S$ be a  cutset of $G$, and let $s=\lfloor \frac{t}{2}\rfloor$.
If $w\left(G-S\right)\geq 5$, then $ G $ has a generalized $ K_{1, 2s} $-matching centered at the components of \( G - S \).
\end{lem}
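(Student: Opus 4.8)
The plan is to reduce the existence of the generalized $K_{1,2s}$-matching to a bipartite factor problem handled by Lemma~\ref{result 1}, and then to establish the required Hall-type inequality by deriving from any violator a cutset that contradicts $t$-toughness. Write $\ell=w(G-S)\ge 5$ and let $D_1,\dots,D_\ell$ be the components of $G-S$. Since $\ell>3$, Lemma~\ref{result 0}(ii) gives that every $D_i$ is complete and that every vertex of $S$ with a neighbour outside $S$ is adjacent to all vertices of some component (these are the only vertices of $S$ we ever use). As $G$ is $t$-tough with $t\ge 2$, it is $\lceil 2t\rceil$-connected, so $\delta(G)\ge 4s$, and, since $N_G(D_i)$ separates $D_i$ from the nonempty rest of $G$, also $|N_G(D_i)|\ge 4s$ for every $i$; in particular each $u\in V(D_i)$ satisfies $|N_G(u)\cap S|\ge 4s-(|V(D_i)|-1)$.

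\textbf{Balanced bipartitions.} For each $D_i$ with $|V(D_i)|\ge 2$ I first claim there is a partition $\{X_i^1,X_i^2\}$ of $V(D_i)$ with $|N_G(X_i^j)\cap S|\ge s$ for $j=1,2$. If some $z\in V(D_i)$ has $|N_G(z)\cap S|\ge s$, put $X_i^1=\{z\}$; then $|N_G(X_i^2)\cap S|\ge s$, since otherwise $\{z\}\cup(N_G(X_i^2)\cap S)$ is a cutset of size $<s+1<2t$ with at least two components (the clique $X_i^2$ and, say, some $D_j$, $j\ne i$), contradicting $t$-toughness. Otherwise every vertex of $D_i$ has fewer than $s$ neighbours in $S$, and building $X_i^1$ by adding vertices one at a time until $|N_G(X_i^1)\cap S|\ge s$ yields $|N_G(X_i^1)\cap S|\le 2s-2<4s\le|N_G(D_i)\cap S|$, so $X_i^2\ne\emptyset$ and $|N_G(X_i^2)\cap S|\ge|N_G(D_i)\cap S|-|N_G(X_i^1)\cap S|\ge s$. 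Now form the auxiliary bipartite graph $B$ on parts $S$ and $X$: for each singleton $D_i=\{w_i\}$ add $x_i\in X$ with demand $f(x_i)=2s$ and $N_B(x_i)=N_G(w_i)\cap S$; for each $D_i$ with $|V(D_i)|\ge 2$ add $x_i^1,x_i^2\in X$ with demand $s$ and $N_B(x_i^j)=N_G(X_i^j)\cap S$. If the hypothesis of Lemma~\ref{result 1} holds for $(B,f)$, the resulting subgraph $H$ assigns to each $x\in X$ a set of $f(x)$ pairwise disjoint private $S$-vertices: for a singleton these $2s$ vertices form the desired $S_i$; for $|V(D_i)|\ge 2$, letting $Y_i^j$ be the $s$ leaves at $x_i^j$, the partitions $\{X_i^1,X_i^2\}$ and $\{Y_i^1,Y_i^2\}$ satisfy $Y_i^j\subseteq N_G(X_i^j)$ and $|Y_i^j|=s$, so $S_i:=Y_i^1\cup Y_i^2$ is a generalized $K_{1,2s}$ centred at $V(D_i)$; and $S_i\cap S_j=\emptyset$ for $i\ne j$ is immediate since $d_H\equiv 1$ on $S$.

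\textbf{Hall's condition (the main obstacle).} It remains to verify $|N_B(\mathcal F)|\ge\sum_{x\in\mathcal F}f(x)$ for all $\mathcal F\subseteq X$, and this is where all the work lies. Take a minimal counterexample $\mathcal F$ and set $T=N_B(\mathcal F)\subseteq S$. Classify the components represented in $\mathcal F$ into $\mathcal C_0$ (singletons with $x_i\in\mathcal F$), $\mathcal C_2$ (cliques with both $x_i^1,x_i^2\in\mathcal F$), and $\mathcal C_1$ (cliques with exactly one of $x_i^1,x_i^2$ in $\mathcal F$), so $\sum_{x\in\mathcal F}f(x)=2s|\mathcal C_0|+2s|\mathcal C_2|+s|\mathcal C_1|$. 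For $D_i\in\mathcal C_0\cup\mathcal C_2$ one has $N_G(D_i)\subseteq T$ (using $N_G(X_i^1)\cup N_G(X_i^2)=N_G(D_i)$), so each such $D_i$ is a whole component of $G-T$; if $T$ is a cutset then $w(G-T)\ge|\mathcal C_0|+|\mathcal C_2|+1$, so $t$-toughness forces $|T|\ge 2s(|\mathcal C_0|+|\mathcal C_2|+1)$, which already contradicts $|T|<\sum_{x\in\mathcal F}f(x)$ unless $|\mathcal C_1|\ge 3$. The crux is thus the partially represented cliques of $\mathcal C_1$: for $D_i\in\mathcal C_1$ with active half $X_i^1$ we still have $N_G(X_i^1)\cap S\subseteq T$ with $|N_G(X_i^1)\cap S|\ge s$, and one must argue either that $X_i^1$ can be split off from $G$ at small extra cost (adding $X_i^{2}$ to the cutset) or that the $\ge s$ vertices these cliques put into $T$ are sufficiently non-overlapping; this, and the treatment of the subcase $\mathcal C_0=\mathcal C_2=\emptyset$ where $T$ need not be a cutset, is exactly where Lemma~\ref{result 0}(ii)--(iii)---which controls how the vertices of $S$, in particular of $S_2$, are shared among the components---has to be used, together with a careful count that does not double-count shared vertices of $T\cap S_2$.

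In short, I expect the reduction via Lemma~\ref{result 1} and the existence of the balanced bipartitions to be routine (short connectivity/toughness arguments), and essentially the entire difficulty to be concentrated in the Hall verification, namely in converting a minimal Hall violator into a cutset $U$ with $|U|<t\cdot w(G-U)$ while correctly handling the partially represented large cliques of $\mathcal C_1$ and the overlaps among the sets $N_G(D_i)$ within $S_2$.
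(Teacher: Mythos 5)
Your reduction to Lemma \ref{result 1} and the overall architecture (contract/split each component into two ``half'' vertices with demands, then verify a Hall-type condition) is exactly the paper's strategy, but the proposal has a genuine gap: the Hall verification, which you yourself identify as ``where all the work lies,'' is never carried out. You only dispose of the easy subcase in which every represented component contributes both halves (so that $N_G(D_i)\subseteq T$ and $T$ is a cutset), and you explicitly leave open both the case of partially represented cliques ($|\mathcal C_1|\ge 3$) and the case in which $T$ is not a cutset; no argument is given for either, only a description of what ``one must argue.'' The paper's key idea for violators meeting three or more components --- namely that every vertex of the set $S_{1}$ of vertices with neighbours in at least two components is, by Lemma \ref{result 0}(iii), adjacent to all vertices of all but at most two components, hence $S_{1}\subseteq N_{G_1}(T')$ and $|N_{G_1}(T')|\ge |S_1|\ge t\ell\ge 2s\ell\ge s|T'|$ --- does not appear in your proposal, and without it (or a substitute) the violator-to-cutset conversion you envisage does not go through.

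The gap is made worse by a quantitative weakening earlier in your argument: you only build bipartitions $\{X_i^1,X_i^2\}$ with $|N_G(X_i^j)\cap S|\ge s$ per half, whereas the paper's exchange/minimality argument produces $\ge 2s$ neighbours per half. With only $s$ per half, the Hall condition for your fixed bipartite graph can fail already for a two-element set consisting of one half from each of two different nontrivial components whose $s$-element neighbourhoods coincide; nothing in your construction or in toughness as you invoke it rules this out, and the lemma only asserts existence for \emph{some} choice of bipartitions, so one must be prepared to re-choose them. The paper front-loads precisely this difficulty: the minimality argument on $\bigl|\,|W^{1j}|-|W^{2j}|\,\bigr|$ (re-balancing by moving a vertex $u$ and its neighbours across the partition) is what guarantees $2s$ neighbours on each side, after which the Hall check splits into the trivial cases $|T'|\le 2$, sets drawn from at most two components (covered by $|N_S(D_i)|\ge 4s$), and sets meeting three components (covered by the $S_{1}$ argument above). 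To repair your proof you would either need to strengthen your bipartition claim to $2s$ per half, or supply the missing non-overlap/cutset argument for $\mathcal C_1$, and as written neither is done.
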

\begin{proof}
 Let $\mathcal{l}=w\left(G-S\right)$, and let $S_{1}$ be the set of vertices that are adjacent to vertices in at least two components of $G-S$. Then by Lemma \ref{result 0}(iii), we have $w\left(G-S_{1}\right)\geq w\left(G-S\right)=\mathcal{l}$.  Since $ G $ is $ t $-tough, we have $ \left|S\right|\geq \left|S_{1}\right|\geq  t\mathcal{l}\geq 2s\mathcal{l} $ and $ \lvert N_{S}(D_{i})\rvert\geq\kappa(G)\geq 2t\geq 4s $ for each $ i\in [1, l] $. 

Let $ D $ be a nontrivial component of $ G-S $ (if exists), and let $ W=N_{S}(D) $. We claim that there exist two partitions $ \left\{D^{1}, D^{2} \right\} $ of $ V(D) $ and $ \left\{W^{1},  W^{2}\right\} $ of $ W $ such that $ \min\left\{\left|W^{1}\right|, \left|W^{2}\right|\right\}\geq 2s $, where $ W^{1}\subseteq N_{S}(D^{1}) $ and $ W^{2}\subseteq N_{S}(D^{2}) $. If $ \left|V(D)\right|\geq 4s $, then by $ \kappa(G)\geq 4s $, there exist at least $4s$ pairwise disjoint edges between $V(D)$ and $W$, thus the claim holds. Now we assume $ 2\leq \left|D\right|\leq 4s-1 $. Let $ \left\{D^{1j}, D^{2j} \right\} $ and $ \left\{W^{1j}, W^{2j}\right\} $ be two partitions of $ V(D) $ and $ W $, respectively,  such that $ \left|\left|W^{1j}\right|-\left|W^{2j}\right|\right| $ is minimum, where $ W^{1j}\subseteq N_{S}(D^{1j}) $ and $ W^{2j}\subseteq N_{S}(D^{2j}) $.  Without loss of generality, we assume $\left|W^{1j}\right|\geq \left|W^{2j}\right|$. Suppose that $\left|W^{2j}\right|\leq 2s-1 $. By $ \left|W\right|\geq \kappa(G)\geq 4s $, we have $\left|W^{1j}\right|\geq 2s+1 $. By the minimum of $ \left|\left|W^{1j}\right|-\left|W^{2j}\right|\right| $, we have 
$ N_{S}(D^{2j})\cap W^{1j}=\emptyset $. So $ \left|D^{1j}\cup W^{2j}\right|\geq \left|N_{G}(D^{2j})\right|\geq \kappa(G)\geq 4s $ and then $ \left|D^{1j}\right|\geq 2s+1 $. As $\left|V(D)\right|\leq 4s-1$, we have $ \left|D^{2j}\right|\leq 2s-2 $.  Let $ u $ be a vertex in $ D^{1j} $ such that $ \left|N_{W^{1j}}(u)\right|\geq 1 $, $ D^{11}=D^{1j}\backslash\left\{u\right\} $, $ D^{21}=D^{2j}\cup\left\{u\right\} $, $W^{11}=W^{1j}\backslash N_{W^{1j}}(u) $ and $ W^{21}=W^{2j}\cup N_{W^{1j}}(u) $. Then $\left|D^{21}\right|\leq 2s-1$. By the minimum of $ \left|\left|W^{1j}\right|-\left|W^{2j}\right|\right| $, this implies that $\left|W^{11}\right|\leq 2s-1 $. Let $\left|W^{11}\right|= 2s-k  $. Then $\left|W^{21}\right|\geq 2s+k$. By $ \left|N_{G}(D^{11})\right|\geq \kappa(G)\geq 4s$, we can obtain$$ \left|N_{S}(D^{11})\cap W^{21}\right|\geq 4s-\left(\left|W^{11}\right|+\left|D^{21}\right|\right)\geq 4s-(2s-k)-(2s-1)=k+1 .$$ 
Let $ X $ be a set of $ k $ vertices in $N_{S}(D^{11})\cap W^{21}$,  $ W^{13}=W^{11}\cup X $ and $ W^{23}=W^{21}\backslash X $. Clearly, $ \left\{W^{13}, W^{23}\right\} $ is a partition of $W$ with $ W^{13}\subseteq N_{S}(D^{11}) $ and $ W^{23}\subseteq N_{S}(D^{21}) $. Note that $ \left|W^{13}\right|=2s $ and $ \left|W^{23}\right|\geq 2s $, this implies that $\left|\left|W^{13}\right|-\left|W^{13}\right|\right|<\left|\left|W^{1j}\right|-\left|W^{2j}\right|\right|$, a contradiction.	  

Let $ D_{1}, D_{2}, \cdots, D_{\mathcal{l}} $ be  the components of $ G-S $. For each $ i\in [1, \mathcal{l}]$, if $ D_{i} $ is nontrivial, let $ \left\{D_{i}^{1}, D_{i}^{2}\right\} $ and $ \left\{W_{i}^{1}, W_{i}^{2}\right\} $ be  partitions of $ D_{i} $ and $ N_{S}(D_{i}) $, respectively, such that $ \min\left\{\left|W_{i}^{1}\right|, \left|W_{i}^{2}\right|\right\}\geq 2s $, where $W_{i}^{1}\subseteq N_{S}(D_{i}^{1})$ and $W_{i}^{2}\subseteq N_{S}(D_{i}^{2})$.
We will construct a bipartite graph in the following. 
For each nontrivial component $ D_{i} $, we contract all vertices in $ D_{i}^{1} $ and $ D_{i}^{2} $ into two vertices $ a_{i} $ and   $ b_{i} $, respectively. For each $ D_{i} $ containing exactly one vertex $ w_{i} $, we split $ w_{i} $ into vertices $ a_{i} $ and  $ b_{i} $, and distribute the neighbors of $ w_{i} $ in $ G $ between $ a_{i} $ and $ b_{i} $ such that $ \left|N_{S}(a_{i})\right|\geq 2s $, $ \left|N_{S}(b_{i})\right|\geq 2s $ and $ N_{S}(a_{i})\cap N_{S}(b_{i})=\emptyset $.	  
Let $ T=\left\{a_{i}, b_{i}: 1\leq i\leq \mathcal{l}\right\} $ and let $ G_{1}=G[S, T] $ be a bipartite graph with $ V(G_{1})=S\cup T $ and  $ E(G_{1})=E(S,  \bigcup_{i=1}^{\ell} V(D_{i}))  $. Note that each vertex in $T$ has at least $2s$ neighbors in $ G_{1} $. To complete the proof, it is sufficient to prove that $ G_{1} $ has a $ K_{1, s} $-matching saturating $ T $. Let $ T^{\prime}\subseteq T $. If $ \lvert T^{\prime}\rvert\leq 2 $, then $ \lvert  N_{G_{1}}(T^{\prime})\rvert\geq 2s\geq s\lvert T^{\prime}\rvert $. Now assume $ \lvert T^{\prime}\rvert\geq 3 $. 
If there exist three vertices in $ T^{\prime} $ obtained from three components of $ G-S $, then Lemma \ref{result 0}(iii) implies that $ S_{1}\subseteq N_{G_{1}}(T^{\prime}) $, so $ s\lvert T^{\prime}\rvert\leq 2s\ell\leq t\ell\leq \left|S_{1}\right|\leq \left| N_{G_{1}}(T^{\prime})\right|$. If the vertices in $ T^{\prime} $ obtained from at most 2 components of $ G-S $, then $ \lvert T^{\prime}\rvert\leq 4  $. Since each component of $ G-S $ has at least $ 4s $ neighbors in $S$,   it is easy to see that $ \lvert  N_{G_{1}}(T^{\prime})\rvert\geq 4s\geq s\lvert T^{\prime}\rvert $. Thus by Lemma \ref{result 1}, $ G_{1} $ has a $ K_{1, s} $-matching saturating $T$.
\end{proof}

As $\kappa(G)\geq \lceil 2t\rceil$, we can obtain the following corollary.
\begin{cor}\label{C2}
Let $G$ be $t$-tough $(P_{3}\cup 3P_{1})$-free graph with $t\geq 4$, and $S$ be a cutset of $G$. Then $G$ has a generalized $K_{1, 2}$-matching centered at the components of $G-S$.
\end{cor}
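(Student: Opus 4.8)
The plan is to distinguish two cases according to $\ell:=w(G-S)$, exploiting that $t\ge 4$ forces $G$ to be noncomplete (it has the cutset $S$) and connected, so $\kappa(G)\ge\lceil 2t\rceil\ge 8$; in particular, for every component $D$ of $G-S$ the set $N_S(D)$ is a cutset of $G$ (it separates $D$ from the other components, of which there is at least one since $\ell\ge 2$), whence $|N_S(D)|\ge\kappa(G)\ge 8$.

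When $\ell\ge 5$, Lemma \ref{result 6} applies with $s=\lfloor t/2\rfloor\ge 2$ and produces pairwise disjoint sets $S_i\subseteq N_G(D_i)\cap S$ with $G[V(D_i),S_i]$ a generalized $K_{1,2s}$ centered at $V(D_i)$. I would simply prune each $S_i$ to a two-element subset $S_i'$: if $|V(D_i)|\ge 2$, take the witnessing partitions $\{X_1,X_2\}$ of $V(D_i)$ and $\{Y_1,Y_2\}$ of $S_i$ and let $S_i'$ consist of one vertex of $Y_1$ and one of $Y_2$ (legitimate since $|Y_j|=s\ge 1$); if $|V(D_i)|=1$, let $S_i'$ be any two vertices of $S_i$. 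Then $G[V(D_i),S_i']$ is a generalized $K_{1,2}$ centered at $V(D_i)$, and the $S_i'$ remain pairwise disjoint, giving the desired matching.

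The substantive case is $2\le\ell\le 4$, handled directly by a greedy construction. Process $D_1,\dots,D_\ell$ in turn; when $D_i$ is reached let $U\subseteq S$ be the (at most $2(\ell-1)\le 6$) vertices already committed. Since $|N_S(D_i)|\ge 8$ we have $|R|\ge 2$ for $R:=N_S(D_i)\setminus U$. If $|V(D_i)|=1$, any two vertices of $R$ form an admissible $S_i$. If $|V(D_i)|\ge 2$, I claim the bipartite graph on parts $V(D_i)$ and $R$ has a matching of size $2$: a pair of independent edges $x_1y_1,x_2y_2$ in it yields $S_i=\{y_1,y_2\}$ together with the partition $D_i^1=\{x_1\}$, $D_i^2=V(D_i)\setminus\{x_1\}$ of $V(D_i)$, which exhibits $G[V(D_i),S_i]$ as a generalized $K_{1,2}$ centered at $V(D_i)$ and disjoint from all earlier choices. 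To prove the claim, assume the maximum matching has size $\le 1$; by K\"onig's theorem all edges between $V(D_i)$ and $R$ are covered by a single vertex $c$. If $c\in R$, then some vertex of $R\setminus\{c\}$ (nonempty) lies in $N_S(D_i)$ yet has no neighbor in $V(D_i)$, which is absurd. If $c\in V(D_i)$, then each vertex of $R$ meets $V(D_i)$ only at $c$, so every vertex of $V(D_i)\setminus\{c\}$ has all its neighbors in $S$ inside $N_S(D_i)\cap U$; hence $\{c\}\cup(N_S(D_i)\cap U)$ is a cutset of $G$ of size at most $7$ separating the nonempty set $V(D_i)\setminus\{c\}$ from the remaining components of $G-S$, contradicting $\kappa(G)\ge 8$.

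I expect the small-$\ell$ case to be the only real obstacle: the delicate point is converting the purely local requirement (two independent edges into the still-unused part of $N_S(D_i)$) into an honest cutset of $G$ of size at most $7$, so that $8$-connectivity can be invoked, while keeping the bookkeeping $|U|\le 6\le |N_S(D_i)|-2$ in force throughout the greedy process. Once that is done, the large-$\ell$ case is an immediate consequence of Lemma \ref{result 6}.
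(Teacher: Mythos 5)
Your proposal is correct and takes essentially the approach the paper itself gestures at: the paper gives no written proof beyond the remark that $\kappa(G)\ge\lceil 2t\rceil$, and your two-case argument (Lemma \ref{result 6} with pruning of each $S_i$ to two partners when $w(G-S)\ge 5$, and a connectivity-based greedy choice when $2\le w(G-S)\le 4$) supplies exactly the details that remark leaves implicit. In particular your small-$w(G-S)$ case is sound: $|N_S(D_i)|\ge\kappa(G)\ge 8$ while at most $2(w(G-S)-1)\le 6$ vertices are already committed, and the K\"onig step correctly turns the failure of two independent edges into a cutset $\{c\}\cup\bigl(N_S(D_i)\cap U\bigr)$ of size at most $7$, contradicting $8$-connectivity.
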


\begin{thm}[\cite{Li}]\label{result 10}
Let $ R $ be an induced subgraph of $P_{4}$, $ P_{3}\cup P_{1} $ or $P_{2}\cup 2P_{1}$. Then every $1$-tough $R$-free graph on at least three vertices is hamiltonian.
\end{thm}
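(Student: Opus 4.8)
The plan is to reduce the statement to its three maximal cases and then, in each, to verify the Chvátal–Erdős condition $\kappa(G)\ge\alpha(G)$, after which hamiltonicity follows from the classical Chvátal–Erdős theorem (every graph on at least three vertices with $\kappa\ge\alpha$ is hamiltonian). For the reduction, note that if $R'$ is an induced subgraph of $R$, then every $R'$-free graph is $R$-free, so the class of $R'$-free graphs is contained in the class of $R$-free graphs; hence it suffices to treat the three four-vertex graphs $R\in\{P_4,\,P_3\cup P_1,\,P_2\cup 2P_1\}$, as every $R$ listed is an induced subgraph of one of these. Complete graphs on at least three vertices are hamiltonian, so I assume $G$ is noncomplete; by the remark in the excerpt $G$ is then $2$-connected, so $\kappa(G)\ge 2$ and $\alpha(G)\ge 2$. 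If $\alpha(G)\le 2$ then $\kappa(G)\ge 2\ge\alpha(G)$ and we are done, so throughout I assume $\alpha(G)\ge 3$.

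First I would record the domination-type lemma that each forbidden graph forces. If $G$ is $(P_3\cup P_1)$-free, then every induced $P_3$ is dominating; consequently, for any maximum independent set $I$, a vertex adjacent to two vertices of $I$ is adjacent to all of $I$: if $x\sim u$ and $x\sim v$ with $u,v\in I$, then $u\,x\,v$ is an induced $P_3$, and any $w\in I$ with $w\nsim x$ would give an induced $P_3\cup P_1$ on $\{u,x,v,w\}$. If $G$ is $(P_2\cup 2P_1)$-free, then for every edge the set of common non-neighbors of its two ends is a clique; hence every vertex outside $I$ is adjacent to all but at most one vertex of $I$ (two missed vertices of $I$ would be nonadjacent yet forced into a common clique).

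Now fix a maximum independent set $I$ and let $K$ be the set of vertices adjacent to every vertex of $I$. In the $(P_3\cup P_1)$ case the lemma shows each vertex of $V(G)\setminus(I\cup K)$ meets $I$ in exactly one vertex, and that no edge joins the private neighborhoods $L_u=\{x:N_G(x)\cap I=\{u\}\}$ of distinct $u$ (a crossing edge $xy$ with $x\in L_u$, $y\in L_{u'}$ together with a third vertex of $I$ yields $P_3\cup P_1$). Thus in $G-K$ the $\alpha(G)$ clusters $\{u\}\cup L_u$ are pairwise nonadjacent, so $w(G-K)\ge\alpha(G)$, and $1$-toughness gives $|K|\ge\alpha(G)$. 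Moreover, whenever a cutset $S$ leaves vertices of $I$ in two different components of $G-S$, each vertex of $K$ has neighbors in both and hence lies in $S$, so $K\subseteq S$ and $|S|\ge\alpha(G)$. Ruling out the degenerate configuration where the independent-set vertices surviving a minimum cutset all collapse into one component then yields $\kappa(G)\ge\alpha(G)$. The $(P_2\cup 2P_1)$ case runs in parallel with the weaker ``all but one'' control, $K$ again being forced into separators and a short counting argument recovering $|S|\ge\alpha(G)$. For $P_4$-free $G$ I would instead use that a connected cograph is a join $G_1+G_2$: any cutset must contain one side entirely, and combining $\alpha(G)=\max\{\alpha(G_1),\alpha(G_2)\}$ with the spreading of a maximum independent set across the components of $G-S$ again gives $\kappa(G)\ge\alpha(G)$.

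The main obstacle is the bookkeeping that upgrades ``$|K|\ge\alpha(G)$ for the particular separator $K$'' to ``$\kappa(G)\ge\alpha(G)$ for the minimum separator,'' since toughness only forces \emph{certain} separators to be large. Concretely, I must exclude the case where, after deleting a small cutset $S$, every vertex of $I\setminus S$ lies in a single component of $G-S$; there the domination lemmas have to be applied to an induced $P_3$ (respectively an edge) straddling $S$ and a second component in order to manufacture the forbidden $P_3\cup P_1$ (respectively $P_2\cup 2P_1$). The ``all but one'' slack makes the $(P_2\cup 2P_1)$ case the most delicate part of this step. As an alternative endgame that sidesteps Chvátal–Erdős, one could feed the cluster decomposition (the set $I$, the large set $K$ complete to $I$, and the private clusters $\{u\}\cup L_u$) into the bipartite degree-prescription result, Lemma~\ref{result 1}, to route the clusters through distinct vertices of $K$ and assemble a Hamilton cycle directly.
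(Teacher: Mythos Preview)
This theorem is not proved in the paper; it is quoted from \cite{Li} as a known result, and no argument for it appears anywhere in the text. There is therefore nothing in the paper to compare your attempt against.

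Viewed on its own, your proposal is a plan rather than a proof, and you say so yourself. In the $(P_3\cup P_1)$-free case the argument that $|K|\ge\alpha(G)$ is fine: the private clusters $\{u\}\cup L_u$ are pairwise nonadjacent in $G-K$, so $w(G-K)\ge\alpha(G)$ and $1$-toughness gives the bound. But the Chv\'atal--Erd\H{o}s condition needs $|S|\ge\alpha(G)$ for a \emph{minimum} cutset $S$, and you explicitly leave open the case where $I\setminus S$ lies entirely in one component of $G-S$, saying only that one ``must'' manufacture a forbidden $P_3\cup P_1$ straddling $S$ without actually doing it. For $(P_2\cup 2P_1)$-free graphs you call the analogous step ``the most delicate part'' and stop; the cograph case is a two-sentence gesture at a join decomposition whose conclusion $\kappa\ge\alpha$ is asserted rather than derived. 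The closing ``alternative endgame'' via Lemma~\ref{result 1} is a suggestion, not an argument. In short, each of the three branches ends at precisely the step you have flagged as the main obstacle, so the proposal does not yet constitute a proof.
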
	
\begin{lem}[\cite{Gao}]\label{result 5}
Let $ G $ be a more than $1$-tough $(P_{3}\cup P_{1}) $-free graph on $ n\geq 3 $ vertices. Then $ G $ is hamiltonian connected.
\end{lem}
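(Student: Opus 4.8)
The plan is to upgrade the hamiltonicity result of Theorem~\ref{result 10} to hamiltonian-connectedness via a vertex gadget. If $G$ is complete the claim is trivial, so assume $G$ is noncomplete; then $\tau(G)>1$ forces $\kappa(G)\ge 3$, and $G$ is in particular $1$-tough and hence already hamiltonian by Theorem~\ref{result 10}. Fix distinct $u,v\in V(G)$; we must produce a Hamilton path between $u$ and $v$. Let $G^{*}$ arise from $G$ by deleting the edge $uv$ (if present) and adding a new vertex $w$ with $N_{G^{*}}(w)=\{u,v\}$. It is standard that $G$ has a Hamilton $u$--$v$ path if and only if $G^{*}$ has a Hamilton cycle: any Hamilton cycle of $G^{*}$ must use both edges $wu,wv$, so deleting $w$ leaves a Hamilton $u$--$v$ path of $G-uv\subseteq G$, and conversely a Hamilton $u$--$v$ path of $G$ avoids $uv$ and so closes to a Hamilton cycle of $G^{*}$ upon inserting $w$ between its two ends. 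A short case check on a cutset $S^{*}$ of $G^{*}$ (splitting on $w\in S^{*}$ and on $\{u,v\}\subseteq S^{*}$), using $\tau(G)>1$, gives $\lvert S^{*}\rvert\ge w(G^{*}-S^{*})$; so $G^{*}$ is $1$-tough, and $\lvert V(G^{*})\rvert\ge 4$. Hence, whenever $G^{*}$ is $(P_{3}\cup P_{1})$-free, Theorem~\ref{result 10} applies to $G^{*}$ and we are done.

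The remaining task is to handle the pairs $u,v$ for which $G^{*}$ is not $(P_{3}\cup P_{1})$-free. As $G$ is $(P_{3}\cup P_{1})$-free, any induced $P_{3}\cup P_{1}$ in $G^{*}$ contains $w$, which has degree $2$ in $G^{*}$. Running through the three possible roles of $w$ in such a copy --- the isolated vertex, an endpoint of the $P_{3}$, or its centre (note $u\nsim v$ in $G^{*}$, so $u,w,v$ always induce a $P_{3}$) --- one sees that this forces a short, explicit list of local configurations in $G$, the essential ones being: (a) $u$ and $v$ have a common non-neighbour in $V(G)\setminus\{u,v\}$; (b) $G-\{u,v\}$ contains an induced $P_{3}$; or (c) some $x\in\{u,v\}$ has a neighbour $p\notin\{u,v\}$ and there is a vertex $q\in V(G)\setminus(N_{G}[x]\cup\{u,v\})$ with $pq\notin E(G)$. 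In each such case a Hamilton $u$--$v$ path must be exhibited by hand.

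For this I would use the rigid structure of $(P_{3}\cup P_{1})$-free graphs together with the full hypothesis $\tau(G)>1$. We may assume $\alpha(G)\ge 3$, since for $\alpha(G)\le 2$ we have $\kappa(G)\ge 3=\alpha(G)+1$ and $G$ is hamiltonian-connected by the Chv\'atal--Erd\H{o}s theorem. Fix a maximum independent set $I$. Maximality of $I$ and $(P_{3}\cup P_{1})$-freeness give (as one checks): every vertex with two neighbours in $I$ is adjacent to all of $I$; and for each $x\in I$, the set $A_{x}$ of vertices whose only $I$-neighbour is $x$ induces a clique and has no edge to $A_{y}$ for $y\ne x$. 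Writing $F$ for the set of vertices adjacent to all of $I$, we obtain $V(G)=I\cup F\cup\bigcup_{x\in I}A_{x}$ with every block $B_{x}:=G[\{x\}\cup A_{x}]$ a clique, $N_{G}(B_{x})\subseteq F$, and $x$ adjacent to all of $F$ for each $x\in I$. Connectivity gives $F\ne\emptyset$; applying $\tau(G)>1$ to the cutset $F$ (which separates the $B_{x}$) yields $\lvert F\rvert>\alpha(G)$, and $\kappa(G)\ge 3$ forces, in each nontrivial block, a vertex other than $x$ with a neighbour in $F$ (else $\{x\}$ would be a cutset). A Hamilton path between $u$ and $v$ is then built along $F$: each block $B_{x}$ is entered from $F$ through one portal, traversed as a path in the clique $B_{x}$ with any two prescribed endpoints, and left to $F$ through a second portal, all chosen portal vertices of $F$ being globally distinct; the unused vertices of $F$ are absorbed into the $F$-part of the path, and $u,v$ are placed at the two ends (if $u$ or $v$ lies in a block $B_{x}$, that block's subpath is routed to end there). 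Making the portal choices globally consistent is a bipartite Hall-type problem, exactly of the kind governed by Lemma~\ref{result 1}, in the spirit of the generalized matchings of Lemma~\ref{result 6}.

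The hard part is this last routing. With $G^{*}$ no longer $(P_{3}\cup P_{1})$-free, Theorem~\ref{result 10} is unavailable and the Hamilton path must be assembled directly; the strict inequality $\tau(G)>1$ (not merely $\tau(G)\ge 1$) is exactly what supplies room for the two prescribed ends and keeps $F$ large enough to shuttle among all blocks, but the bookkeeping of where $u$ and $v$ sit --- in $I$, in $F$, or deep inside some clique $B_{x}$ (so the path must both start there and still reach every other block through the bounded set $F$) --- is delicate. A cleaner alternative, probably closer to the original argument, is to dispense with the gadget and re-run the longest-path proof behind Theorem~\ref{result 10} in the rooted setting, with $u,v$ as prescribed endpoints, replacing each use of $1$-toughness by a use of $\tau(G)>1$; the extra slack should absorb the two root vertices throughout.
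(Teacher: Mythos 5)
There is no internal proof in the paper to compare against: Lemma~\ref{result 5} is imported verbatim from \cite{Gao} (Gao--Shan), so your attempt has to stand on its own, and as written it does not. The parts you do carry out are sound: the gadget $G^{*}$ (add $w$ with $N(w)=\{u,v\}$, delete $uv$) is indeed $1$-tough when $\tau(G)>1$ and $\kappa(G)\geq 3$, the equivalence between Hamilton $u$--$v$ paths of $G$ and Hamilton cycles of $G^{*}$ is standard, and the clique-block decomposition $V(G)=I\cup F\cup\bigcup_{x\in I}A_{x}$ with each $B_{x}=G[\{x\}\cup A_{x}]$ a clique, $N_{G}(B_{x})\subseteq F$, and $|F|>\alpha(G)$ is a correct consequence of $(P_{3}\cup P_{1})$-freeness once $\alpha(G)\geq 3$.

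The gap is that the reduction via Theorem~\ref{result 10} covers almost nothing, and exactly where it fails you stop at a sketch. For $G^{*}$ to be $(P_{3}\cup P_{1})$-free you need, among other things, that $G-\{u,v\}$ contain no induced $P_{3}$ and that $u,v$ have no common non-neighbour; for a generic pair $u,v$ in a graph of the class under consideration these conditions fail, so your ``remaining task'' is in fact the whole lemma. There the argument is only asserted: the routing of a Hamilton $u$--$v$ path through $F$ and the blocks $B_{x}$ is described as ``a bipartite Hall-type problem, exactly of the kind governed by Lemma~\ref{result 1},'' but no Hall condition is verified and no path is constructed; $G[F]$ need not be a clique (nonadjacent pairs inside $F$ are perfectly compatible with $(P_{3}\cup P_{1})$-freeness, since the other vertices of $I$ are adjacent to both), so ``absorbing the unused vertices of $F$ into the $F$-part of the path'' and placing $u$ and $v$ at the two ends when they lie deep inside blocks are unsupported steps. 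You acknowledge this yourself and then propose a second, also unexecuted, alternative (re-running a rooted longest-path argument with $\tau(G)>1$). A proposal whose central construction is deferred in both of its suggested forms is a plan, not a proof; to complete it you would have to actually build the $u$--$v$ Hamilton path in the block structure (or carry out the rooted longest-path argument), which is precisely the content of the cited result in \cite{Gao}.
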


\begin{lem}\label{result 9}
Let $ G $ be a  $ (P_{3}\cup 2P_{1}) $-free graph, and $k(G)$ be the minimum number of pairwise disjoint paths covering all the vertices in $G$.
If $\tau(G)\geq 1$, then $k(G)\leq 2$. If $\tau(G)<1$, there exists a cutset $W\subseteq V(G)$ such that $k(G)\leq w\left(G-W\right)-\left|W\right|\leq \alpha(G)$. 
\end{lem}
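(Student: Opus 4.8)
The engine of the argument is the following consequence of $(P_{3}\cup 2P_{1})$-freeness, which I would isolate first: \emph{for every independent pair $\{y,z\}$ of $G$, the graph $G-N_{G}[\{y,z\}]$ is a disjoint union of complete graphs} --- any induced $P_{3}$ lying inside $G-N_{G}[\{y,z\}]$ is non-adjacent to both $y$ and $z$ and hence, together with $y$ and $z$, forms an induced $P_{3}\cup 2P_{1}$.

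Next I would clear away the degenerate cases. If $G$ is $(P_{3}\cup P_{1})$-free or $P_{4}$-free, or if $\alpha(G)\le 2$ (so $G$ is $3P_{1}$-free, hence $(P_{2}\cup 2P_{1})$-free), then when $\tau(G)\ge 1$ Theorem~\ref{result 10} gives a hamiltonian cycle (for $|V(G)|\ge 3$), so $k(G)=1$; and when $\tau(G)<1$ the relevant separators are small (for $\alpha(G)\le 2$ the only obstruction to $1$-toughness is a cut vertex $v$ with $w(G-v)=2$), and a short direct argument --- splitting at such a cut vertex, using that at least one side is complete, and building hamiltonian paths there via the path version of Chv\'atal--Erd\H{o}s, or, when a non-complete side is $(P_{3}\cup P_{1})$-free, via Lemma~\ref{result 0}(i) together with Theorem~\ref{result 10}/Lemma~\ref{result 5} --- produces a cutset $W$ with $k(G)\le w(G-W)-|W|$. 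So from now on assume $\alpha(G)\ge 3$, that $G$ has an induced $P_{3}\cup P_{1}$, and fix an independent triple $\{x,y,z\}$.

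The plan in the main case is to decompose $G$ along $W_{0}:=N_{G}(\{y,z\})$. Since $N_{G}(y),N_{G}(z)\subseteq W_{0}$, the vertices $y$ and $z$ are isolated in $G-W_{0}$, and by the structural fact $G-N_{G}[\{y,z\}]=C_{1}\cup\cdots\cup C_{m}$ is a disjoint union of cliques with $x$ in one of them, so $w(G-W_{0})=m+2\ge 3$ and $W_{0}$ is a cutset to which Lemma~\ref{result 0}(ii)--(iii) applies: every vertex of $W_{0}$ is complete to all of some component of $G-W_{0}$, and every vertex of $W_{0}$ seeing at least two components is complete to all but at most one of the $m+2$ components. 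I would then set up a bipartite auxiliary graph between $W_{0}$ and a set of two ``ports'' for each component (the two ends of a hamiltonian path of $C_{i}$ --- the single vertex counted with multiplicity two if $|C_{i}|=1$ --- and likewise for $\{y\}$ and $\{z\}$) and apply Lemma~\ref{result 1}: when $\tau(G)\ge 1$ the inequality $|W_{0}|\ge w(G-W_{0})$ (after discarding the few vertices of $W_{0}$ that see only one component) verifies Hall's condition and yields a system of vertex-disjoint ``links'' through $W_{0}$ splicing the $m+2$ components into at most two paths; when $\tau(G)<1$ the same scheme, run with a cutset $W$ that realises the scattering number, gives a path cover of size at most $w(G-W)-|W|$. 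Finally, $w(G-W)-|W|\le\alpha(G)$ is immediate, since a single vertex from each component of $G-W$ is an independent set of size $w(G-W)\ge w(G-W)-|W|$.

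The step I expect to be the real obstacle is controlling the \emph{number} of paths that the splicing produces: getting boundedly many paths is routine, but getting at most two (respectively at most $w(G-W)-|W|$) requires the ``link graph'' on the components to be sufficiently connected, which is exactly where Lemma~\ref{result 0}(iii) --- any two vertices of $W_{0}$ seeing at least two components share many common complete components --- and $\kappa(G)\ge\lceil 2\tau\rceil$ are used, together with a careful analysis of the vertices of $W_{0}$ adjacent to exactly one component: such a vertex is adjacent to $y$ or to $z$ and to no clique, so it, with its siblings, must be absorbed into a hamiltonian path of the ``cone'' on $\{y\}$ or $\{z\}$ formed by these vertices, and bounding how many such vertices there can be (so that this cone has path-cover number at most two) needs one more appeal to $(P_{3}\cup 2P_{1})$-freeness. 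The remaining work is the bookkeeping for single-vertex cliques and for the choice of which (cut) vertex is allowed to lie between the two paths.
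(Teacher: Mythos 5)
Your route is genuinely different from the paper's (the paper handles the $\tau\geq 1$ case by taking a longest path of the unique noncomplete component and showing, via a toughness violation built from the segments between attachment vertices, that the uncovered vertices form a single clique; the $\tau<1$ case is handled by chaining the components of a tough set using Lemma \ref{result 0}(ii)--(iii)). However, your main case has a concrete gap: the two paths you produce must cover \emph{every} vertex of $W_{0}=N_{G}(\{y,z\})$, and nothing in the sketch does this. Lemma \ref{result 1}, applied to the bipartite graph between $W_{0}$ and the ports of the $m+2$ components, only yields a system of distinct link vertices (roughly one or two per component); after the splicing there may remain many vertices of $W_{0}$ -- typically far more than $m$ -- that are on no path. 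Absorbing them is exactly the hard part: an $S_{2}$-type vertex can only be inserted into a free slot between two consecutive already-covered vertices it is joined to, the components may all be singletons (so the cliques provide few slots), and $G[W_{0}]$ carries no guaranteed structure you can lean on. The toughness hypothesis enters your argument only as $|W_{0}|\geq w(G-W_{0})$, which is a \emph{lower} bound on $|W_{0}|$ and hence points the wrong way for absorption; what is needed is an upper bound on $|W_{0}|$ relative to the insertion capacity, or a structural argument replacing it. (Your ``cone'' analysis is fine -- in fact those vertices together with $y$, resp.\ $z$, form a single clique by one more application of $(P_{3}\cup 2P_{1})$-freeness -- but it only treats the $S_{1}$-vertices, not the surplus $S_{2}$-vertices.) This is precisely what the paper's longest-path argument avoids: a longest path automatically contains all such problematic vertices, and the only thing left to control is the set of vertices off the path, which the toughness contradiction shows is one clique.

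A smaller but related issue arises in your $\tau(G)<1$ case: to reach the bound $k(G)\leq w(G-W)-|W|$ you need \emph{every} vertex of $W$ to act as a link merging two pieces (and the leftover vertices of $W$ again to be absorbed), which requires each vertex of $W$ to see at least two components and to be complete to suitable ones; a cutset realizing the scattering number need not have this property. The paper gets it by taking a tough set, observing it is a proper cutset, and invoking Lemma \ref{result 0}(ii)--(iii) (with a further reduction, via a tough set of the unique noncomplete component, when $H-S$ is not a disjoint union of cliques). As written, your proposal needs these two points -- covering the surplus of $W_{0}$ (resp.\ $W$) and the properness of the chosen cutset -- to be filled in before it constitutes a proof.
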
  

\begin{proof}
We assume there exists a noncomplete component in $G$. By Lemma \ref{result 0} (i) and (ii), we have that $1\leq w\left(G\right)\leq 2$ and there exists exactly one noncomplete component $H$ in $G$. Let $k(H)$ be the minimum number of pairwise disjoint paths in $H$ covering all the vertices in $H$.
If $ w\left(G\right)=1 $, then $H=G$. If $w\left(G\right)=2$, then $k(G)=k(H)+1$, $ \alpha(G)=\alpha(H)+1 $ and $w\left(G-Y\right)=w\left(H-Y\right)+1$ for any subset $Y\subseteq Y\subseteq V(H)$. Thus it is sufficient to prove that this lemma holds for $H$. 

\noindent{\bf Case 1.} $\tau(H)\geq 1$.

Let $ P $ be a longest path of $ H $, and let two ends of $P$ be $x_{0}$ and $x_{l+1}$. Assume the direction of $P$ is from $x_{1}$ to $x_{l-1}$. We will prove that if $ V(H)\backslash V(P)\neq\emptyset $, then  $G\left[ V(H)\backslash V(P)\right]$ is a clique.
Let $X=\left\{x_{1}, \dots,  x_{l}\right\}$ be a set of neighbors in $P$ of vertices in $H-V(P)$.
Clearly,  $l\geq \kappa(H)\geq 2\tau(H)\geq 2 $.  Since $ P $ is a longest path of $ H $, we have that $ x_{0}x_{l+1}\notin E(H) $ and $X\cap \left\{x_{1}, x_{l+1}\right\}=\emptyset $.
Since $ H $ is $ \left(P_{3}\cup 2P_{1}\right) $-free, each component of $ H-V(P) $ is complete.  Suppose to the contrary that $ w\left(H-V(P)\right)\geq 2 $.

Let $ X^{+}=\left\{x_{1}^{+}, \cdots,  x_{l}^{+}\right\} $ and $ X^{-}=\left\{x_{1}^{-}, \cdots,  x_{l}^{-}\right\} $.
Note that $ N_{H}(x_{0})\cap X^{+}=\emptyset $. Otherwise, there exists $ x_{j}\in X $ such that $ x_{j}^{+}x_{0}\in E(G) $. Let $u_{j}\in N_{H-V(P)}(x_{j})$. Then $ u_{j}x_{j}Px_{0}x_{j}^{+}Px_{l+1} $ is a longer path than $P$ in $H$, a contradiction. Similarly, we can obtain $ N_{H}(x_{l+1})\cap X^{-}=\emptyset $.

Let $ V_{0}=V(x_{0}Px_{1}^{-}) $,  $ V_{l}=V(x_{l}^{+}Px_{l+1}) $ and  $ V_{i}=V(x_{i}P_{1}x_{i+1})\backslash\left\{x_{i}, x_{i+1}\right\} $  for each $ i\in [1, l-1] $.	We claim that $ V_{i}\neq \emptyset $ for each $ i\in [0, l] $.  For otherwise, 
there exists $ {j}\in [1, l]$ such that $x_{j}^{+}=x_{j+1} $. Let $ u_{j}\in N_{H-V(P)}(x_{j}) $ and $u_{j+1}\in N_{H-V(P)}(x_{j+1})$.
Since $ P $ is a lonest path of $H$, 
this implies that $ u_{j} $ and $u_{j+1}$ are in two distinct components of $H-V(P)$.
Consider two paths $ u_{j}x_{j}x_{j+1} $ and $u_{j+1}x_{j+1}x_{j}$, and two nonadjacent vertices $ x_{0}, x_{l+1} $. Since $ H $ is $ (P_{3}\cup 2P_{1}) $-free, $ N_{G}(x_{0})\cap X^{+}=\emptyset $ and $ N_{H}(x_{l+1})\cap X^{-}=\emptyset $, we have $ x_{0}x_{j}\in E(H) $ or $ x_{l+1}x_{j+1}\in E(H) $.
Then the path $u_{j}x_{j}x_{0} $ and two vertices $ u_{j+1}, x_{l+1} $, or the path $u_{j+1}x_{j+1}x_{l+1} $ and two vertices $ u_{j}, x_{0} $ induce a copy of $ P_{3}\cup 2P_{1} $ in $ H $, a contradiction.

We will prove that for distinct $ i, j\in [0, l] $, we have $E[V_{i}, V_{j}]=\emptyset $.	Suppose to the contrary that there exist $i, j\in [0, l]$ such that $E\left(V_{i}, V_{j}\right)\neq \emptyset  $. By the $ \left(P_{3}\cup 2P_{1}\right) $-freeness of $ G $ and $ w\left(H-V(P)\right)\geq 2 $, we know that $ H[V_{i}\cup V_{j}] $ is complete. Since $ x_{0}x_{l+1}\notin E(H) $,  this implies that $i\neq 0$ or $ j\neq l $. Moreover, since $ N_{H}(x_{0})\cap X^{+}=\emptyset $ and $ N_{H}(x_{l+1})\cap X^{-}=\emptyset $, we have $ i, j\in [1, l-1] $. Assume $ i<j $.
If there exists a component $D$ of $H-V(P)$ such that $ N_{D}(x_{i})\neq \emptyset $ and $ N_{D}(x_{j})\neq \emptyset $, then there exists a path $Q$ in $H-\left(V(P)\backslash\left\{x_{i}, x_{j}\right\}\right)$ from $x_{i}$ to $x_{j}$. Since $ H[V_{i}\cup V_{j}] $ is complete, we have  $x_{i}^{+}x_{j}^{+}\in E(H)$. Then $ x_{0}Px_{i}Qx_{j}Px_{i}^{+}x_{j}^{+}Px_{l+1}$ is a longer than $ P $, a contradiction. Otherwise, let $ u_{i}\in N_{H-V(P)}(x_{i}) $ and $u_{j}\in N_{H-V(P)}(x_{i})$, then we have $ x_{j}u_{i}, x_{i}u_{j}\notin E(G) $. Consider the path $u_{i}x_{i}x_{i}^{+}$ and two vertices $x_{0}, u_{j}$. Since $ H $ is $ \left(P_{3}\cup 2P_{1}\right) $-free and $ N_{H}(x_{0})\cap X^{+}=\emptyset $, we have  $ x_{i}x_{0}\in E(H) $. Consider the path $u_{i}x_{i}x_{0}$ and two vertices $u_{j}, x_{l+1}$. Since $ H $ is $ \left(P_{3}\cup 2P_{1}\right) $-free and $ N_{H}(x_{0})\cap X^{+}=\emptyset $, we have  $ x_{i}x_{l+1}\in E(H) $. Now $ u_{j}x_{j}Px_{i}^{+}x_{j}^{+}Px_{l+1}x_{i}Px_{0} $ is a longer path in $H$  than $P $, a contradiction.

Since  $V_{i}\neq\emptyset$ and $E[V_{j}, V_{j}]=\emptyset$ for any two distinct numbers $i, j\in [0, l]$, this implies that $w\left(H-X\right)\geq l+3 $, so $ \frac{\left|X\right|}{w\left(H-X\right)}\leq \frac{l}{l+3}<1 $, a contradiction.

\noindent{\bf Case 2. $0<\tau(H)<1$.}

Let $S$ be a tough set of $H$. Then $ \left|S\right|\leq w\left(H-S\right). $ $S$ is a proper cutset of $H$.

\noindent{\bf Case 2.1.} Each component of $H-S$ is complete. 

	Let $ \ell=w\left(H-S\right) $. Then $\left|S\right|\leq \mathcal{l}$. If $\ell=2$, then clearly, we have $ k(H)=1\leq \ell-\left|S\right|<\alpha(H) $. If $ \ell=3$, then as $S$ is a proper cutset of $H$ and each vertex in $S$ is adjacent to all the vertices in some component of $H-S$ by Lemma \ref{result 0}(ii), we can obtain $k(H)\leq \ell-\left|S\right|<\alpha(H)$. 	If $ \ell\geq 4 $, then by Lemma \ref{result 0}(iii), each vertex in $S$ is adjacent to all the vertices in at least $ \ell-1$ components of $H-S$, so we have $k(H)\leq\ell-\left|S\right|<\alpha(H)$. 

\noindent{\bf Case 2.2} There exists a noncomplete component in $H-S$. 

By Lemma \ref{result 0}(i) and (ii), we have that $w\left(H-S\right)=2$ and there exists exactly one noncomplete component $D$ of $H-S$ and $D$ is $(P_{3}\cup P_{1})$-free.  $\tau(H)<1$ implies that $\left|S\right|=1$.  If $\tau(D)\geq 1$, then by Theorem \ref{result 10}, we have $k(H)\leq 1\leq\alpha(H)$. If $\tau(D)<1$, then let $T$ be a tough set of $D$. Let $S_{1}=\left\{x\in S: N_{D}(x)\cap (V(D)-T)\neq\emptyset\right\}$ and $T_{1}=S_{1}\cup T$. Note that $w\left(H-T_{1}\right)\geq \max\left\{3, \left|T_{1}\right| \right\}$ and each vertex in $T_{1}$  has neighbors in at least two components of $H-T_{1}$. By Lemma \ref{result 0}(ii), each component of $H-T_{1}$ is complete. By a similar argument as Case 2.1, we  obtain $ k(H)\leq w\left(H-T_{1}\right)-\left|T_{1}\right|<\alpha(H) $. 
\end{proof}


\begin{lem}[\cite{Shan1}]\label{result 4}
Let $ t>0 $ be a real number,  $ G $ be
a  $ t $-tough graph on $ n\geq 3 $ vertices and $ C $ be a nonhamiltonian cycle of $ G $. If $ x\in V(G)\backslash V(C) $ satisfies that $ d_{C}(x)>\frac{n}{t+1}-1 $, then $ G $ has a cycle $ C^{\prime} $ such that $ V(C^{\prime})=V(C)\cup\left\{x\right\} $. 
\end{lem}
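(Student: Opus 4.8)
The plan is to argue by contradiction: assume $G$ has no cycle $C'$ with $V(C')=V(C)\cup\{x\}$, and deduce an upper bound on $d_C(x)$ contradicting the hypothesis. A complete graph on $n\ge 3$ vertices trivially admits such a $C'$, so I will assume $G$ is non-complete, whence $\alpha(G)\ge 2$. Fix the given orientation of $C$, set $N=N_C(x)$ and $k=d_C(x)=|N|$, and let $Y=\{u^{+}:u\in N\}$ be the set of successors along $C$ of the neighbours of $x$. Taking successors around $C$ is injective, so $|Y|=k$, and $x\notin Y$ since $Y\subseteq V(C)$. The crux is to show that $Y\cup\{x\}$ is an independent set of $G$, which will give $\alpha(G)\ge k+1$.

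To see that $x$ is adjacent to no vertex of $Y$: if $u^{+}\in N$ for some $u\in N$, then $u$ and $u^{+}$ are adjacent vertices of $C$ both joined to $x$, and replacing the edge $uu^{+}$ by the path $uxu^{+}$ produces a cycle on $V(C)\cup\{x\}$, a contradiction. To see that $Y$ itself is independent: suppose $u^{+}v^{+}\in E(G)$ for distinct $u,v\in N$; using the previous point we may assume $v\ne u^{+}$ and $u\ne v^{+}$ (otherwise $u,v$ are consecutive on $C$ and we finish as above). Then the arcs $u\mathop{C}\limits^{\leftarrow} v^{+}$ and $u^{+}\mathop{C}\limits^{\rightarrow} v$ partition $V(C)$, and $x\,u\mathop{C}\limits^{\leftarrow} v^{+}\,u^{+}\mathop{C}\limits^{\rightarrow} v\,x$ is a cycle whose vertex set is exactly $V(C)\cup\{x\}$, again a contradiction. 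Hence $Y\cup\{x\}$ is independent of size $k+1$, so $\alpha(G)\ge d_C(x)+1$.

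To close, I would use the classical toughness bound on the independence number: take a maximum independent set $I$; since $G$ carries a cycle it is not edgeless, and since $\alpha(G)\ge 2$ the set $V(G)\setminus I$ is a cutset with $w\big(G-(V(G)\setminus I)\big)=\alpha(G)$, so $t$-toughness gives $n-\alpha(G)=|V(G)\setminus I|\ge t\,\alpha(G)$, i.e.\ $\alpha(G)\le \frac{n}{t+1}$. Combining, $d_C(x)+1\le \alpha(G)\le \frac{n}{t+1}$, so $d_C(x)\le \frac{n}{t+1}-1$, contradicting the hypothesis. Therefore the cycle $C'$ exists.

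I expect the only delicate point to be the crossover step: one must check that the displayed closed walk $x\,u\mathop{C}\limits^{\leftarrow} v^{+}\,u^{+}\mathop{C}\limits^{\rightarrow} v\,x$ really is a cycle visiting every vertex of $V(C)\cup\{x\}$ exactly once, in every relative position of $u$ and $v$ along $C$. The potentially degenerate configurations (where $u$ and $v$ are consecutive on $C$) are precisely those already disposed of by the elementary insertion move, so after that case distinction the verification is routine; the toughness bound on $\alpha(G)$ and the injectivity of $u\mapsto u^{+}$ are standard.
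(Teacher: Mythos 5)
This paper does not prove Lemma~\ref{result 4} at all; it is imported verbatim from \cite{Shan1}, so there is no internal proof to compare against. Your argument is correct and is the standard proof of such insertion lemmas: the two reductions (inserting $x$ between consecutive neighbours, and the crossover cycle $x\,u\mathop{C}\limits^{\leftarrow}v^{+}\,u^{+}\mathop{C}\limits^{\rightarrow}v\,x$ when $u^{+}v^{+}\in E(G)$) force $\{x\}\cup N_{C}^{+}(x)$ to be independent, and the toughness bound $\alpha(G)\leq\frac{n}{t+1}$ (valid once $\alpha(G)\geq 2$, which you secure by disposing of the complete case) yields $d_{C}(x)\leq\frac{n}{t+1}-1$, the desired contradiction; this matches the approach taken in the cited source, and I see no gap.
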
	

\begin{lem}\label{result 11}
Let $ t\geq 8 $ be a real number, $ G $ be a $ t $-tough $ (P_{3}\cup 3P_{1}) $-free graph, and $ S $ be a proper cutset of $ G $. If there are at least 5 components in $G-S$, three of which are nontrivial, and
each vertex in $ S $ has more than $ \frac{n}{t+1}-1 $ neighbors in $ G-S $,  then 
$G$ is hamiltonian.
\end{lem}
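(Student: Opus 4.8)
Write $\ell=w(G-S)\ge 5$, let $D_1,\dots,D_\ell$ be the components of $G-S$ with $D_1,D_2,D_3$ nontrivial, and set $s=\lfloor t/2\rfloor\ge 4$. First I would record the coarse structure. Since $\ell\ge k+2$ for $k=3$ and $S$ is proper (so $S=S_2$ and $S_1=\emptyset$), Lemma \ref{result 0}(ii)--(iii) gives that every $D_i$ is complete, that each vertex of $S$ is adjacent to all vertices of at least $\ell-2$ of the $D_i$, and that any two vertices of $S$ are adjacent to all vertices of at least $\ell-4\ge 1$ common $D_i$. Toughness together with $\kappa(G)\ge\lceil 2t\rceil\ge 16$ gives $|S|\ge t\ell\ge 8\ell$ and $|N_G(D_i)\cap S|\ge 16$ for every $i$, so in particular no component is isolated from $S$.

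The crucial reduction is that it suffices to build a single cycle $C$ with $V(G)\setminus S\subseteq V(C)$. Indeed, for any $x\in S\setminus V(C)$ the hypothesis gives $d_C(x)\ge |N_G(x)\cap(V(G)\setminus S)|>\tfrac{n}{t+1}-1$, so Lemma \ref{result 4} produces a cycle on $V(C)\cup\{x\}$; iterating this (and stopping if $C$ ever becomes Hamiltonian) absorbs all remaining vertices of $S$ and yields a Hamiltonian cycle. This route is essentially forced: vertices of $G-S$ need not satisfy the degree bound, so they cannot be absorbed, and hence $C$ must already contain $V(G)\setminus S$.

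To build $C$ I would invoke Lemma \ref{result 6} (valid since $t\ge 2$ and $\ell\ge 5$) to obtain pairwise disjoint sets $S_i\subseteq N_G(D_i)\cap S$ with $|S_i|=2s$ and $G[V(D_i),S_i]$ a generalized $K_{1,2s}$ centered at $V(D_i)$. Splitting $S_i$ into the two $s$-sets $Y_i^1,Y_i^2$ from the definition and using that $D_i$ is complete, for any $y^-\in Y_i^1$ and $y^+\in Y_i^2$ one gets a Hamiltonian path $R_i$ of $D_i$ with one end adjacent to $y^-$ and the other to $y^+$ (distinct ends when $|D_i|\ge 2$). It then remains to splice $R_1,\dots,R_\ell$ into one cycle: fix a cyclic order of the $D_i$ and, for each consecutive pair, join the relevant $Y$-sets by a short path whose interior lies in $S$, keeping all these connectors pairwise disjoint. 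Since every vertex of $S$ misses at most $2$ components, most pairs of components share a vertex of $S$ joined to all of both, which serves as a single-vertex connector; for the few ``bad'' pairs one reroutes, either by inserting between them in the cyclic order a component joined to all of $S$, or by walking through $G[S]$. Here toughness is used a second, more substantial time: if $G[S]$ were too poorly connected, then $\bigl(\bigcup_iV(D_i)\bigr)\cup T$ for a suitable $T\subseteq S$ would be a cutset with too many components relative to its size, contradicting $t$-toughness; combined with $|S|\ge 8\ell$ this provides the room and connectivity to carry out every splice, and the three nontrivial components supply the extra flexibility needed at the splices meeting them. The resulting cycle $C$ contains $V(G)\setminus S$, and the reduction above completes the proof.

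The main obstacle is exactly this last step, i.e.\ assembling the $R_i$ into a single cycle. The delicate points are choosing the cyclic order while keeping the connectors disjoint, handling trivial components and ``bad'' pairs of components, and extracting from $t$-toughness precisely the amount of connectivity of $G[S]$ needed to route (or reroute) every connection; this is a case analysis rather than a one-line argument, whereas the structural reductions and the final absorption via Lemma \ref{result 4} are routine.
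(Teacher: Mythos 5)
Your overall architecture matches the paper's: reduce to building one cycle $C\supseteq V(G)\setminus S$ and absorb the leftover vertices of $S$ via Lemma \ref{result 4} (each such vertex has more than $\frac{n}{t+1}-1$ neighbours on $C$ by hypothesis), get the structural facts from Lemma \ref{result 0}(ii)--(iii), take two matching partners per component from Lemma \ref{result 6}, and splice Hamiltonian paths of the complete components using single vertices of $S$ as connectors. Up to this point the proposal is sound and is exactly the paper's Step 1, which yields a cycle $C_1$ through all but at most two components.

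The genuine gap is in your treatment of the leftover (``bad'') components, which is the actual crux of the lemma. Your two fallback mechanisms are not justified: (a) there need not exist a component joined to all of $S$ (Lemma \ref{result 0}(iii) says each vertex of $S$ dominates many components, not that any component is dominated by all of $S$), and (b) ``walking through $G[S]$'' presupposes connectivity of the induced subgraph $G[S]$, but $t$-toughness of $G$ gives no lower bound on the connectivity of $G[S]$ --- it could even be edgeless, so a connector with all interior vertices in $S$ may simply not exist, and the cutset-counting argument you sketch only bounds cutsets of $G$, not of $G[S]$. The paper's actual mechanism for the hard cases ($k=l-1$ with $l=5$, and $k=l-2$ with the leftover partners attached only to the leftover components) is different: it deletes the already-used connectors and path-ends to form a graph $G_1$ that is still highly connected ($\kappa(G)\ge 16$, few deletions), extracts two disjoint paths $Q_1,Q_2$ from the uncovered component(s) to a \emph{nontrivial} component $D_1$, chooses a Hamiltonian path of the complete graph $D_1$ containing the edge $ww'$ between the two attachment points, and replaces that edge by the detour $wQ_1\cdots Q_2w'$ picking up the missing component(s), re-routing the Hamiltonian paths of any components that $Q_1,Q_2$ pass through. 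This is precisely where the hypothesis ``three of the components are nontrivial'' is used, and your remark that the nontrivial components ``supply extra flexibility'' does not identify this construction; without it (or an equivalent device), the splicing step of your plan can fail.
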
 
\begin{proof}
We will construct a cycle $ C $ containing all the vertices in  $ G-S$. By Lemma \ref{result 4}, we can insert all the vertices of $S-V(C)$ into $C$. The resulting cycle is a hamiltonian cycle of $G$.

Let $ D_{1}, D_{2}, \dots, D_{l} $ be the components of $ G-S $. By $ \tau(G)\geq 8 $ and Lemma \ref{result 6}, there exist distinct vertices $ x_{1}, y_{1}, x_{2}, y_{2}, \dots,$ $ x_{l}$, $ y_{l} $ in $ S $ such that $ x_{i}, y_{i} $ are partners of $ D_{i} $ for each $i\in [1, l]$. As each $ D_{i} $ is complete, there exists a hamiltonian path $ P_{i} $ in $ D_{i} $ with
the two ends $ a_{i}, b_{i} $ of $ P_{i} $  adjacent to $ x_{i}$ and $y_{i} $, respectively. Note that $ a_{i}=b_{i} $ if $ \left|V(D_{i})\right|=1 $. By Lemma \ref{result 0}(iii), assume $ y_{i}a_{i+1}\in E(G) $ for each $i\in [1, l-3]$, and we can obtain a cycle 
$C_{1}=a_{r}P_{r}b_{r}y_{r}a_{r+1}P_{r+1}b_{r+1}y_{r+1}\dots y_{k-1}a_{k}P_{k}b_{k}y_{k}a_{r}$ (if necessary, swap the labels of components $D_{l-1}$ and $D_{l}$), where $ r\in [1, 3] $ and $k\geq l-3+r\geq l-2$.
Assume $ r=1 $ and $k\leq l-1$.

\noindent{\bf Case 1.} $k=l-1$.

If $ l\geq 6 $, then by Lemma \ref{result 0}(iii), there exists $ j\in [1, l-2] $ such that $ x_{l}\sim D_{j} $ (resp. $D_{j+1}$) and $ y_{l}\sim D_{j+1} $ (resp. $D_{j}$), or $ x_{l}\sim D_{1} $ (resp. $D_{l-1}$) and  $ y_{l}\sim D_{l-1} $ (resp. $D_{1}$). Now replace the path $ b_{j}y_{j}a_{j+1} $ or $a_{1}y_{l-1}b_{l-1}$ on $C_{1}$ with $ b_{j}x_{l}a_{l}P_{l}b_{l}y_{l}a_{j+1} $ or $a_{1}x_{l}a_{l}P_{l}b_{l}y_{l}b_{l-1} $, respectively. Note that the resulting cycle is a desired cycle. 

Now we assume $l=5$. Since $G-S$ has at least 3 nontrivial components, we assume $D_{1}$ is nontrivial.  If $\left|V(D_{5})\right|\geq 3$, let $G_{1}=G-\bigcup_{i=2}^{4}\left\{a_{i}, b_{i}, y_{i}\right\}\cup \left\{y_{1}, a_{1}\right\}$; otherwise, let $G_{1}=G-\bigcup_{i=2}^{4}\left\{a_{i}, b_{i}, y_{i}\right\}\cup \left\{y_{1}\right\}$. Then as $\kappa(G)\geq \lceil 2t\rceil\geq 16$, we have $\kappa(G_{1})\geq 5$. If $\left|V(D_{1})\right|=1$, then there exist 2 internally disjoint paths in $G_{1}$ from $V(D_{5})$ to $V(D_{1})$, whose terminal vertices are distinct and internal vertices belong to $V(G)\backslash\left(V(D_{5})\cup V(D_{1})\right)$. If $\left|V(D_{5})\right|\geq 2$, then there exist 2 disjoint paths in $G_{1}$ from $V(D_{5})$ to $V(D_{1})$, whose internal vertices belong to $V(G)\backslash\left(V(D_{5})\cup V(D_{1})\right)$. Denote these two paths by $Q_{1}$ and $Q_{2}$. Let $V(Q_{1})\cap V(D_{1})=\left\{w\right\}$, $V(Q_{1})\cap V(D_{5})=\left\{u\right\}$, $V(Q_{2})\cap V(D_{1})=\left\{w^{\prime}\right\}$ and  $V(Q_{2})\cap V(D_{5})=\left\{v\right\}$. Let $P_{1}^{\prime}$ be a hamiltonian path of $D_{1}$ from $a_{1}$ to $b_{1}$ such that $ww^{\prime}\in E(P^{\prime})$, $P_{i}^{\prime}$ be a hamiltonian path of $D_{i}-V(Q_{1})\cup V(Q_{2})$ from $a_{i}$ to $b_{i}$ for each $i\in [2, 4]$, and $P_{5}^{\prime}$ be a hamiltonian path of $D_{5}$ from $u$ to $v$. Now replace the path $P_{i}$ on $C_{1}$ with $P_{i}^{\prime}$ for each $i\in [1, 4]$. Denote the resulting cycle by $C_{2}$. Then replace the edge $ww^{\prime}$ in $C_{2}$ with the path $wQ_{1}uP_{5}^{\prime}vQ_{2}w^{\prime}$. The resulting cycle is a desired cycle of $G$.


\noindent{\bf Case 2.} $k=l-2$.

If $ N_{D_{l}}(x_{l-1})\cup N_{D_{l}}(y_{l-1})\cup N_{D_{l-1}}(x_{l})\cup N_{D_{l-1}}(x_{l})=\emptyset $, then Lemma \ref{result 0}(iii) implie that each vertex in $ \left\{x_{l-1}, y_{l-1}, x_{l}, x_{l}\right\} $ is adjacent to all the vertices in at least $ l-3 $ components of $ D_{1}, \dots, D_{l-2} $. Now there exists $ j\in [1, l-2] $ such that $ x_{l-1}\sim D_{j} $ (resp. $D_{j+1}$) and $ y_{l-1}\sim D_{j+1} $ (resp. $D_{j}$), or $ x_{l-1}\sim D_{l-2}$ (resp. $D_{1}$) and $ y_{l-1}\sim D_{1} $ (resp. $D_{l-2}$). Now replace the path $ b_{j}y_{j}a_{j+1} $ or $a_{1}y_{l-1}b_{l-2}$ on $C_{1}$ with $ b_{j}x_{l-1}a_{l-1}P_{l-1}b_{l-1}y_{l-1}a_{j+1} $ or $a_{1}x_{l-1}a_{l-1}P_{l-1}b_{l-1}y_{l-1}b_{l-2}$, respectively. Note that the resulting cycle contains at least $l-1$ consecutive components. By a similar argument as Case 1, we can obtain a cycle containing all the vertices in $G-S$.

Now we assume $ N_{D_{l}}(x_{l-1})\cup N_{D_{l}}(y_{l-1})\cup N_{D_{l-1}}(x_{l})\cup N_{D_{l-1}}(x_{l})\neq \emptyset $. Without loss of generality, we assume $ y_{l-1}a_{l}\in E(G) $. Let $ P^{\prime}=x_{l-1}a_{l-1}P_{l-1}b_{l-1}y_{l-1}a_{l}P_{l}b_{l}y_{l} $. 	If there exists $ j\in [1, l-3] $ such that $ x_{l-1} \sim D_{j} $ (resp. $D_{j+1}$) and $ y_{l} \sim D_{j+1} $ (resp. $D_{j}$), or $ x_{l-1}\sim D_{1} $ (resp. $D_{l-2}$) and $y_{l}\sim D_{l-2}$ (resp. $D_{1}$), then replace the path $b_{j}y_{j}a_{j+1}$ or $b_{l-2}y_{l-2}a_{1}$ in $C_{1}$ with $b_{j}x_{l-1}P^{\prime}y_{l}a_{j+1}$ or $a_{1}x_{l-1}P^{\prime}y_{l}b_{l-2}$, respectively. Note that the resulting cycle is a desired cycle. Otherwise, Lemma \ref{result 0}(iii) implies that $ 5\leq l\leq 6 $, $x_{l-1}\sim D_{l}$ and  $ y_{l}\sim D_{l-1} $. Assume $D_{1}$ is nontrivial. If $\left|V(D_{1})\right|=2$, let $X=\bigcup_{i=2}^{l-2}\left\{a_{i}, b_{i}, y_{i}\right\}\cup\left\{y_{1}, y_{l-1}\right\} $, otherwise, let $X=\bigcup_{i=2}^{l-2}\left\{a_{i}, b_{i}, y_{i}\right\}\cup\left\{a_{1}, y_{1}, y_{l-1}\right\}$. Let 
\[
G_{1} = 
\begin{cases}
	G-X & \text{if } \left|V(D_{l-1})\right|=1 \text{~and~} \left|V(D_{l})\right|=1. \\
	G-X\cup \left\{b_{l-1}\right\}, & \text{if } \left|V(D_{l-1})\right|\geq 2 \text{~and~} \left|V(D_{l})\right|=1, \\
	G-X\cup \left\{a_{l} \right\}, & \text{if } \left|V(D_{l-1})\right|=1 \text{~and~} \left|V(D_{l})\right|\geq 2,\\
	G-X\cup \left\{b_{l-1}, a_{l} \right\}, & \text{if } \left|V(D_{l-1})\right|\geq 2 \text{~and~} \left|V(D_{l})\right|\geq 2, 
\end{cases}
\]
As $\kappa(G)\geq 16$ and $l\leq 6$, we have $\tau(G_{1})\geq 2$, so there exist two disjoint paths $Q_{1}$ and $Q_{2}$ in $G_{1}$ between $V(D_{l-1})\cup V(D_{l})$ and $D_{1}$, whose internal vertices belong to $V(G)\backslash\left(V(D_{1})\cup V(D_{l-1})\cup V(D_{l})\right)$. Let $V(Q_{1})\cap V(D_{1})=\left\{w\right\}$, $V(Q_{2})\cap V(D_{1})=\left\{w^{\prime}\right\}$, $V(Q_{1})\cap \left(V(D_{l-1})\cup V(D_{l})\right)=\left\{u\right\}$ and  $V(Q_{2})\cap \left(V(D_{l-1})\cup V(D_{l})\right)=\left\{v\right\}$.
 Let $P_{1}^{\prime}$ be a hamiltonian path of $D_{1}$ from $a_{1}$ to $b_{1}$ such that $ww^{\prime}\in E(P^{\prime})$, $P_{i}^{\prime}$ be a hamiltonian path of $D_{i}-V(Q_{1})\cup V(Q_{2})$ from $a_{i}$ to $b_{i}$ for each $i\in [2, l-2]$.  Now replace the path $P_{i}$ on $C_{1}$ with $P_{i}^{\prime}$ for each $i\in [1, l-2]$. Denote the resulting cycle by $C_{2}$. If $u$ and $v$ are in two distinct components, then let $P_{l-1}^{\prime}$ be a hamiltonian path of $D_{l-1}$ from $u$ to $b_{l-1}$ and $P_{l}^{\prime}$ be a hamiltonian path of $D_{l}$ from $a_{l}$ to $v$. 
Now replace the edge $ww^{\prime}$ in $C_{2}$ with $wQ_{1}uP^{\prime}_{l-1}b_{l-1}y_{l-1}a_{l}P^{\prime}_{l}vQ_{2}w^{\prime}$. The resulting cycle is a desired cycle of $G$. If $u$ and $v$ are in a common component, then assume $u, v\in V(D_{l-1})$. Let $P_{l-1}^{\prime\prime}$ be a hamiltonian path of $D_{l-1}$ from $u$ to $b_{l-1}$ such that $uv\in E(P^{\prime\prime}_{l-1})$. As $y_{l}\sim D_{l-1}$, let $Q=uy_{l}b_{l}P_{l}a_{l}y_{l-1}b_{l-1}P_{l-1}^{\prime}v_{l}$.  Relace the edge $ww^{\prime}$ in $C_{2}$ with $P^{\prime}$. The resulting cycle is a desired cycle of $G$.
\end{proof}

\begin{lem}\label{result 13}
	Let $ G $ be a $ (P_{3}\cup P_{1}) $-free graph with $ 0<\tau(G)\leq 1 $, and $ S $ be a tough set of $ G $. Then $ w\left(G-S\right)=\alpha(G) $.
\end{lem}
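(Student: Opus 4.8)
The plan is to establish the two inequalities $w(G-S)\le \alpha(G)$ and $\alpha(G)\le w(G-S)$ separately; the second one is where all the work lies. Throughout write $w:=w(G-S)$ and let $D_1,\dots,D_w$ be the components of $G-S$. Since $S$ is a tough set it is in particular a cutset, so $w\ge 2$, and Lemma \ref{result 0}(ii) (applied with $k=1$) tells us that every $D_i$ is complete and that every vertex of $S_1\cup S_2$ is adjacent to all vertices of some component. I would first argue that in fact $S=S_1\cup S_2$: if some $v\in S$ had no neighbour outside $S$, then $S':=S\setminus\{v\}$ would still be a cutset (the components of $G-S$ survive and $v$ becomes an isolated vertex of $G-S'$), whence $|S'|/w(G-S')=(|S|-1)/(w+1)<|S|/w=\tau(G)$, contradicting the minimality defining a tough set; the degenerate subcase $|S|=1$ is excluded since then $v$ would be isolated in $G$, contradicting $\tau(G)>0$. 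The easy inequality is then immediate: choosing one vertex from each $D_i$ gives an independent set of size $w$, so $\alpha(G)\ge w$.

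For $\alpha(G)\le w$, let $I$ be a maximum independent set. As each $D_i$ is a clique, $I$ contains at most one vertex of each component; relabel so that $I$ meets precisely $D_1,\dots,D_a$, and set $b:=|I\cap S|$, so that $\alpha(G)=a+b$. If $a=0$, then $I\subseteq S$, and since $\tau(G)\le 1$ forces $|S|\le w$, we get $\alpha(G)=|I|\le|S|\le w$, as wanted.

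The remaining case $a\ge 1$ is the crux, and it is here that $(P_3\cup P_1)$-freeness enters; I expect the injectivity claim below to be the only non-routine point. For each $v\in I\cap S$ choose, via $S=S_1\cup S_2$ and Lemma \ref{result 0}(ii), an index $f(v)$ with $v$ adjacent to all of $V(D_{f(v)})$; since $v$ is independent of the (unique) $I$-vertex of every component it meets, necessarily $f(v)\in\{a+1,\dots,w\}$. I claim $f$ is injective. Suppose $f(v_1)=f(v_2)=j$ with $v_1\ne v_2$ in $I\cap S$. Pick $u\in V(D_j)$; then $v_1u,v_2u\in E(G)$ while $v_1v_2\notin E(G)$, so $\{v_1,u,v_2\}$ induces a $P_3$. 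Let $z$ be the $I$-vertex lying in $D_1$ (it exists since $a\ge 1$). Then $z$ is non-adjacent to $v_1$ and to $v_2$ (all three lie in $I$) and to $u$ (it lies in a different component of $G-S$, as $j\ge a+1\ge 2$), so $\{v_1,u,v_2,z\}$ induces $P_3\cup P_1$, a contradiction. Hence $f$ is injective, so $b=|I\cap S|\le|\{a+1,\dots,w\}|=w-a$, giving $\alpha(G)=a+b\le w=w(G-S)$. Combining the two inequalities yields $\alpha(G)=w(G-S)$, which completes the proof.
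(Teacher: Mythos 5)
Your proof is correct, but it takes a genuinely different route from the paper's. The paper argues by contradiction: assuming $w(G-S)<\alpha(G)$, it uses $\tau(G)\le 1$ to get $|S|\le w(G-S)<\alpha(G)$, deduces via Lemma \ref{result 0}(ii)--(iii) that a maximum independent set must meet both $S$ and $G-S$ and that $w(G-S)=2$, hence $|S|\le 2$, and then exhibits an induced $P_{3}\cup P_{1}$ of the form $ys_{1}s_{2}$ plus a vertex $x$ of the other component, so the forbidden path lies mostly inside $S$. You instead prove the nontrivial inequality $\alpha(G)\le w(G-S)$ directly by an injection: after observing that a tough set contains no vertex whose neighbours all lie in $S$ (so Lemma \ref{result 0}(ii) applies to every vertex of $S$), you send each vertex of $I\cap S$ to a component it dominates, note that such components are disjoint from $I$, and prove injectivity by producing a $P_{3}\cup P_{1}$ whose $P_{3}$ is centred at a component vertex $u$ with two $S$-vertices as ends and whose isolated vertex is an $I$-vertex of another component. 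Your route avoids Lemma \ref{result 0}(iii) and the reduction to $w(G-S)=2$, uses $\tau(G)\le 1$ only in the corner case $I\subseteq S$, and in fact gives the slightly stronger fact that $\alpha(G)\le w(G-S)$ for any tough set as soon as $I$ meets $G-S$; the paper's proof is shorter on the page but leans on terser structural deductions. The one step worth flagging in your write-up, that a tough set has no vertex with all neighbours inside $S$, is sound: deleting such a vertex yields a cutset of strictly smaller ratio, and the degenerate case $|S|=1$ is ruled out because $\tau(G)>0$ forces $G$ to be connected; this is the same proper-cutset property of tough sets that the paper itself invokes elsewhere (e.g.\ in the proof of Lemma \ref{result 9}).
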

\begin{proof}
	It is easy to see that $ w\left(G-S\right)\leq \alpha(G) $. Suppose that $ w\left(G-S\right)<\alpha(G) $. Then by $ \tau(G)\leq 1 $, we have $ \left|S\right|\leq w\left(G-S\right)<\alpha(G) $. Let $ I $ be a maximum independent set of $ G $.
	By Lemma \ref{result 0}(ii), each component of $ G-S $ is complete, this implies that
	$ I\cap S\neq\emptyset $ and $I\cap V(G-S)\neq\emptyset$.
	Hence by Lemma \ref{result 0}(iii),  we have $ w\left(G-S\right)=2 $. Then  $ \left|S\right|\leq 2 $. Let $ D_{1} $ and $ D_{2} $ be two components of $ G-S $.  Let $ s_{1}\in S\cap I $ and $ x\in V(G-S)\cap I $. Without loss of generality, we assume $ x\in D_{1} $. Lemma \ref{result 0}(iii) implies that $ s_{1}\sim D_{2} $, so $V(D_{2})\cap I=\emptyset $. And since $ D_{1} $ is complete, we obtain $ \left(V(D_{1})\backslash\left\{x\right\}\right)\cap I=\emptyset $. Thus $ \emptyset\neq I\backslash\left\{s_{1}, x\right\}\subseteq S $. Let  $S\backslash\left\{s_{1}\right\}=\left\{s_{2}\right\} $. Then $ s_{2}\in I $ and $ N_{G}(s_{2})\cap\left\{a, s_{1}\right\} =\emptyset $, so by Lemma \ref{result 0}(iii), we have $ s_{2}\sim D_{2} $. Let $ y\in D_{2} $. Then the path $ys_{1}s_{2}$ and the vertex $ x$  induce a copy of $ P_{3}\cup P_{1} $, a contradiction.
\end{proof}

\begin{lem}[\cite{Bauer}]\label{result 2}
	Let $ t>0 $ be a real number and $ G $ be
	a  $ t $-tough graph on $ n\geq 3 $ vertices with $ \delta(G)>\frac{n}{t+1}-1 $. Then $ G $ is hamiltonian.
\end{lem}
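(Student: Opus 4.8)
The plan is to argue by contradiction, running the classical extremal longest-cycle argument with the toughness hypothesis fed in twice and with Lemma \ref{result 4} as the insertion tool. Suppose $G$ is $t$-tough with $\delta(G)>\frac{n}{t+1}-1$ but not hamiltonian. Since $\tau(G)>0$ and $n\geq 3$, $G$ is connected, so a longest cycle $C$, given a fixed orientation, leaves a nonempty set $R=V(G)\setminus V(C)$. I would first record the two consequences of toughness to be used: $(\ast)$ for any independent set $I$ with $|I|\geq 2$ the set $V(G)\setminus I$ is a cutset with $w\bigl(G-(V(G)\setminus I)\bigr)=|I|$, so $n-|I|\geq t|I|$ and hence $\alpha(G)\leq\frac{n}{t+1}<\delta(G)+1$; and $(\ast\ast)$ $G$ is $\lceil 2t\rceil$-connected.

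Next I would apply Lemma \ref{result 4}: as $C$ is longest, no $x\in V(R)$ can satisfy $d_C(x)>\frac{n}{t+1}-1$, for otherwise $G$ would have a cycle on $V(C)\cup\{x\}$. Thus $d_C(x)\leq\frac{n}{t+1}-1$ for every $x\in V(R)$, and since $\delta(G)>\frac{n}{t+1}-1$ this forces $d_R(x)=d_G(x)-d_C(x)>0$, so every component $H$ of $R$ is nontrivial; indeed, for each $x\in V(H)$ we get $|V(H)|\geq d_H(x)+1\geq\delta(G)-\frac{n}{t+1}+2$. Writing $A_H=N_C(H)$, the inequality $d_G(x)\leq(|V(H)|-1)+|A_H|$ gives $|A_H|\geq\delta(G)+1-|V(H)|$, while $|A_H|\geq\kappa(G)\geq\lceil 2t\rceil$ by $(\ast\ast)$. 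I would then invoke the standard longest-cycle facts: with respect to the orientation of $C$, the successor set $A_H^{+}$ is independent, is disjoint from $A_H\cup V(H)$, and no vertex of $A_H^{+}$ has a neighbour in $H$ (each failure produces a strictly longer cycle by a short detour through $H$). Hence $A_H^{+}\cup\{h\}$ is independent for any $h\in V(H)$, and $(\ast)$ yields $|A_H|\leq\frac{n}{t+1}-1$; more generally, using a maximum independent set of $H$ in place of $\{h\}$, $|A_H|\leq\frac{n}{t+1}-\alpha(H)$.

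At this point I would observe that $|A_H|\geq\lceil 2t\rceil$ and $|A_H|\leq\frac{n}{t+1}-1$ together with $n<(\delta(G)+1)(t+1)$ already force $\delta(G)>\lceil 2t\rceil$, so the case $\delta(G)\leq\lceil 2t\rceil$ is finished and all the work is in the regime where $\delta(G)$ is large. There the contradiction must come from a vertex count: the pairwise disjoint sets $V(H)$, $A_H$, $A_H^{+}$ (and $A_H^{-}$) over the components $H$ of $R$, plus the nonempty interiors of the gaps of $C$ between consecutive attachment vertices, should add up to more than $n$; equivalently, $\bigcup_H A_H$, possibly enlarged by the $A_H^{\pm}$'s, is a cutset separating $G$ into more than $\frac1t$ times its size many components — each $H$ is a component of $G-A_H$, and the removed arcs of $C$ should contribute further components — violating toughness. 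The main obstacle, and where a cited theorem of this kind does its real work, is precisely this aggregation: for a single component the bounds $|V(H)|\geq\delta(G)+2-\frac{n}{t+1}$ and $|A_H|\leq\frac{n}{t+1}-1$ are mutually consistent, so one must control the interaction of distinct components — their attachment sets can overlap on $C$, and a successor vertex $a^{+}$ of one component's attachment set can be adjacent to another component. I would handle this by bundling all the successor sets into one large independent set to overshoot $\alpha(G)\leq\frac{n}{t+1}$, or all the attachment sets into one cutset to overshoot the toughness inequality, ruling out the bad cross-component adjacencies by a hopping-type rerouting and, if necessary, refining the choice of $C$ by a secondary extremal parameter among longest cycles. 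Making this bookkeeping precise is the crux.
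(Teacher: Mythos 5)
There is a genuine gap, and you name it yourself: the entire contradiction in the main case is deferred to an aggregation/bookkeeping step that you describe only as a plan (``the crux'') and never carry out. Everything before that point is standard and correct — using Lemma \ref{result 4} to force $d_C(x)\leq \frac{n}{t+1}-1$ for $x$ off a longest cycle $C$, deducing that every component $H$ of $G-V(C)$ is nontrivial, and getting $|A_H|\leq \frac{n}{t+1}-1$ from the independence of $A_H^{+}\cup\{h\}$ together with $\alpha(G)\leq \frac{n}{t+1}$ — but these single-component bounds are, as you observe, mutually consistent, so no contradiction follows from them. The step you postpone is exactly where the difficulty of the theorem lives: successors of attachment vertices of different components may be adjacent to one another or lie in another component's attachment set, $\bigcup_H A_H$ need not separate $G$ into enough components for the toughness inequality to bite, and ``ruling out the bad cross-component adjacencies by a hopping-type rerouting'' is not a routine matter — the known arguments in this area require the insertible-vertex machinery (choosing, in each segment of $C$ between consecutive attachment vertices, a first non-insertible vertex) and a carefully constructed cutset mixing attachment vertices with selected segment vertices, not a naive union of the $A_H$ or $A_H^{+}$. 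As written, the proposal is a correct setup plus an unproved claim that the setup can be closed; that claim is the theorem.

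For comparison: the paper does not prove this statement at all — it is imported verbatim from Bauer, Broersma, van den Heuvel and Veldman \cite{Bauer}, whose original proof is a substantially more intricate longest-cycle argument along the lines just indicated (it works with one component $H$ of $G-V(C)$, the segments of $C$ determined by $N_C(H)$, non-insertible vertices in those segments, and a toughness count on a cutset built from them), rather than an aggregation over all components. So your outline is pointed in a reasonable direction, but to count as a proof you would essentially have to reconstruct that argument; the present text does not.
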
	
In the following, we define $N_{G}(x_{1}\dots x_{k})=\bigcup_{i=1}^{k} N_{G}(x_{i})$.

\begin{lem}\label{result 8}
Let $ G $ be a $ 15 $-tough $ (P_{3}\cup 3P_{1}) $-free graph of order $n$, and $ S $ be a proper cutset of $ G $ such that $ w(G-S)=3 $. If there exists a noncomplete component $ D_{1} $ of $ G-S $ containing at least $\frac{7}{8}n$ vertices, then $ G $ is hamiltonian, or there exists a clique $ Q_{1}$ in $ D_{1} $ such that $ \left|V(Q_{1})\right|-2\left|N_{G}(Q_{1})\right|\geq 2 $. 
\end{lem}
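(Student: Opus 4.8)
The plan is to assume $D_1$ has no clique $Q$ with $|V(Q)|-2|N_G(Q)|\ge 2$ and to deduce that $G$ is hamiltonian. First I would record what the hypotheses force. By Lemma~\ref{result 0}(i) (with $k=3$), since $w(G-S)=3$ and $G$ is $(P_3\cup 3P_1)$-free, $D_1$ is the only non-complete component of $G-S$ and is $(P_3\cup P_1)$-free, while $D_2,D_3$ are cliques; $15$-toughness gives $|S|\ge 45$, $\kappa(G)\ge 30$, so $|N_S(D_i)|\ge 30$ for $i=1,2,3$, and $|V(D_1)|\ge\frac78 n$ gives $|S|+|V(D_2)|+|V(D_3)|\le\tfrac n8$ (so $n\ge 376$). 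The key qualitative remark is: if $M\subseteq V(D_1)$ is a cutset of $D_1$ with $w(D_1-M)\ge 2$, then all components of $D_1-M$ are cliques (Lemma~\ref{result 0}(ii) inside $D_1$, or directly, a non-complete component yielding an induced $P_3$ and a vertex of another component an extra $P_1$); and if such a clique $Q$ satisfies $Q\cap N_{D_1}(S)=\varnothing$, then $N_G(Q)=N_{D_1}(Q)\subseteq M$ with no neighbours in $S$, so a large such $Q$ is immediately the sought $Q_1$. So the bad clique will arise exactly when some large clique of $D_1$ is weakly attached to the rest of $G$, and in its absence one must build the cycle.

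I would split on $\tau(D_1)$. If $\tau(D_1)>1$, then $D_1$ is hamiltonian connected (Lemma~\ref{result 5}). Now $N_{D_1}(S)$ is a cutset of $D_1$ (as $D_1\setminus N_{D_1}(S)\neq\varnothing$), and $\tau(D_1)>1$ forces $w(D_1-N_{D_1}(S))<|N_{D_1}(S)|$, so a largest clique of $D_1-N_{D_1}(S)$ has at least $\frac{|V(D_1)|-|N_{D_1}(S)|}{|N_{D_1}(S)|-1}$ vertices and $G$-neighbourhood inside $N_{D_1}(S)$; if $|N_{D_1}(S)|$ is not too large (threshold of order $\sqrt n$) this is already $Q_1$. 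Otherwise $N_{D_1}(S)$ is large and I would build the Hamilton cycle: fix a generalized $K_{1,2}$-matching $\{x_i,y_i\}\subseteq N_S(D_i)$ at $D_1,D_2,D_3$ (Corollary~\ref{C2}), take Hamilton paths of the cliques $D_2,D_3$ with prescribed ends and a Hamilton path of $D_1$ between the vertices paired with $x_1,y_1$, then splice these into a cycle through $S\cup V(D_2)\cup V(D_3)$ (at most $n/8$ vertices), covering $V(D_2)\cup V(D_3)$ entirely and absorbing the remaining vertices of $S$ by Lemma~\ref{result 4} (those with more than $\frac n{16}-1$ neighbours on the current cycle), the low-degree ones handled via their two incident components.

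If $\tau(D_1)\le 1$, let $T$ be a tough set of $D_1$; by Lemma~\ref{result 13}, $w(D_1-T)=\alpha(D_1)=:m$, the components $C_1,\dots,C_m$ of $D_1-T$ are cliques, and $|T|=\tau(D_1)m\le m$. Since $S\cup T$ is a cutset of $G$ with $w(G-(S\cup T))=m+2$, $15$-toughness gives $|S|+|T|\ge 15(m+2)$, hence $\tfrac n8\ge|S|\ge 14m+30$, $m\le\tfrac{n/8-30}{14}$, and $\sum_i|V(C_i)|=|V(D_1)|-|T|\ge\tfrac78 n-m$. If $m\le 2$, a largest $C_i$ has $|V(C_i)|\ge\tfrac12(\tfrac78 n-2)$ and $|N_G(C_i)|\le|T|+|S|\le 2+\tfrac n8$, so $|V(C_i)|-2|N_G(C_i)|\ge\tfrac{3n}{16}-5\ge 2$ and $Q_1=C_i$ works. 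If $m\ge 3$, I would apply Lemma~\ref{result 0}(iii) to $S\cup T$ (valid as $m+2\ge 5$): every vertex of $S\cup T$ with neighbours in two components of $G-(S\cup T)$ is adjacent to all vertices of all but at most two of $C_1,\dots,C_m$. Unless some $C_i$ is so weakly attached to $S\cup T$ that $|V(C_i)|>2|N_G(C_i)|+1$ (in which case that $C_i$ is $Q_1$), this forces the bipartite graph between $S$ and $C_1\cup\dots\cup C_m$ to be dense, and I would then combine this density with the generalized $K_{1,2}$-matching (Corollary~\ref{C2}), Hamilton paths of the complete pieces $C_i,D_2,D_3$, and Lemma~\ref{result 4} to build a Hamilton cycle of $G$.

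The hard part is the Hamilton-cycle construction in the two ``dense'' configurations ($\tau(D_1)>1$ with $N_{D_1}(S)$ large, and $\tau(D_1)\le 1$ with $m\ge 3$ and no weakly attached $C_i$): one must thread the cycle through the small but possibly poorly-connected set $S\cup V(D_2)\cup V(D_3)$ so as to cover every one of its vertices, which means exploiting the $(P_3\cup 3P_1)$-freeness of $G$ (the strong adjacency supplied by Lemma~\ref{result 0}(iii)) together with the insertion lemma (Lemma~\ref{result 4}) simultaneously --- the former ensuring enough $S$--$D_1$ edges, the latter mopping up the high-degree leftovers. A further nuisance is the bookkeeping: making the inequalities from $15$-toughness, from $|V(D_1)|\ge\tfrac78 n$, and from $|T|\le m\le\tfrac{n/8-30}{14}$ (and their analogues with $N_{D_1}(S)$) combine so that in every configuration exactly one of the two conclusions of the lemma is reached.
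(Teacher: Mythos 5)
Your opening reductions (the structure of $D_2,D_3,D_1$ from Lemma \ref{result 0}(i), the toughness bounds, the observation that a clique component of $D_1$ minus a small set with no neighbours in $S$ is the desired $Q_1$, and the easy $\tau(D_1)\le 1$, $\alpha(D_1)\le 2$ subcase) are consistent with the paper. But there are two genuine gaps. First, a local one: in the $\tau(D_1)>1$ branch you conclude that if $\left|N_{D_1}(S)\right|$ is small then a component of $D_1-N_{D_1}(S)$ is the clique $Q_1$. Your own "key remark" that such components are cliques needs $w\left(D_1-N_{D_1}(S)\right)\ge 2$; $N_{D_1}(S)$ need not be a cutset of $D_1$ at all, and when $D_1-N_{D_1}(S)$ is connected it is merely $(P_3\cup P_1)$-free (e.g.\ a large complete bipartite graph), so it may contain no large clique and the pigeonhole bound $\frac{\left|V(D_1)\right|-\left|N_{D_1}(S)\right|}{\left|N_{D_1}(S)\right|-1}$ does not apply. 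Exactly this configuration (few vertices of $D_1$ attached to $S$, equivalently many $S$-vertices with few neighbours in $D_1$) is where the real difficulty lies.

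Second, and more seriously, the part you explicitly defer as "the hard part" is the proof. Lemma \ref{result 4} can only insert a vertex with more than $\frac{n}{16}-1$ neighbours on the current cycle, while vertices of $S$ may have degree as small as $30$; the global size of $N_{D_1}(S)$ (your $\sqrt{n}$ threshold) says nothing about these individual degrees, so "absorbing the remaining vertices of $S$ by Lemma \ref{result 4}, the low-degree ones handled via their two incident components" is not an argument. The paper's proof is built precisely around this obstruction: it splits $S$ vertex-by-vertex into $S_1$ (more than $\frac{n}{16}-1$ neighbours in $D_1$) and $S_2$, forms the auxiliary graph $G_1=G[V(D_2)\cup V(D_3)\cup S_2]$, and through Claims 2--9, a case analysis on $\alpha(G[S_2])$, tough sets of $D_1$, of $G_1$ and of $H_1=D_1-W_1$, Lemma \ref{result 13}, and Lemma \ref{result 11} (whose hypotheses --- at least five components, three nontrivial, every cut vertex with more than $\frac{n}{16}-1$ outside neighbours --- must be verified, e.g.\ for the cutset $S_1\cup T$ in Case 1.1), it either routes a cycle through every vertex of $V(D_1)\cup V(G_1)$ so that only $S_1$-vertices remain for Lemma \ref{result 4}, or exhibits the weakly attached clique. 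Your sketch replaces none of this; the same criticism applies to your $\tau(D_1)\le 1$, $m\ge 3$ case, where the vertices with few neighbours in $D_1-T$ again block any direct use of Lemma \ref{result 4} and are handled in the paper via Claim 5 and Assumption 1. As written, the proposal is a plan that stops exactly where the lemma becomes nontrivial.
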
 

\begin{proof}
	By $\tau(G)\geq 15$ and Lemma \ref{result 2}, we assume $30\leq \delta(G)\leq \frac{n}{16}-1$, so $ n\geq 496 $.
Let $ D_{2} $ and $D_{3}$ be the components of $ G-S$ other than $D_{1}$. Then 
\begin{equation}
	\left|S \right|=\left|V(G)\right|-\left|V(D_{1})\cup V(D_{2})\cup V(D_{3})\right|\leq n-\left(\frac{7}{8}n+2\right)=\frac{n}{8}-2.
\end{equation}
By Lemma \ref{result 0}(i), we have that both $ D_{2} $ and $ D_{3} $ are complete, and $ D_{1} $ is $ (P_{3}\cup P_{1}) $-free.

By simple calculations, we can obtain the following claim.

\noindent{\bf Claim 1.} If there exists a set $ A\subseteq V(D_{1})$ such that $ \left|A\right|\leq \frac{3n}{16}-3 $ and $\alpha(D_{1}-A)=1$, then $ D_{1}-A $ is a desired clique.

Let $ S_{1}=\left\{x\in S: \left|N_{D_{1}}(x)\right|>\frac{n}{16}-1\right\} $ and $S_{2}=S\backslash S_{1}$. By Claim 1, we may assume each vertex $w\in S_{2} $ satisfies $ \alpha(D_{1}-N_{D_{1}}(w))\geq 2 $, call this {\bf Assumption 1}. Then by the $ (P_{3}\cup 3P_{1}) $-freeness of $ G $, we obtain the following claim.

\noindent{\bf Claim 2.} Each vertex in $ S_{2} $ is adjacent to all the vertices in $ D_{2} $ or $D_{3}$.

If $ \alpha(G[S_{2}])\geq 2 $, then let  $ p=\min\left\{\alpha(G[S_{2}]), 3 \right\} $ and $ q\in [2, p] $. By Claim 1, we assume that any $q$ independent vertices $w_{1}, \dots, w_{q}$ in $S_{2}$  satisfy $ \alpha\left(D_{1}-\bigcup_{i=1}^{q}N_{D_{1}}(w_{i})\right)\geq 2$, call this {\bf Assumption 2}. 		
Let $ G_{1}=G[V(D_{2})\cup V(D_{3})\cup S_{2}] $.

\noindent{\bf Claim 3.} 
If $ \alpha(G[S_{2}])\geq 2 $, then $ \alpha(G_{1})=\alpha(G[S_{2}]) $. 

\begin{proof}
Since $S_{2}\subseteq V(G_{1})$, we have $\alpha(G_{1})\geq \alpha(G[S_{2}])$. Suppose to the contrary that $ \alpha(G_{1})> \alpha(G[S_{2}])\geq 2 $. Let $ I $ be a maximum independent set of $ G_{1} $. Then $ I\cap S_{2}\neq\emptyset $ and $I\cap \left(V(D_{2})\cup V(D_{3})\right)\neq\emptyset $. Since $ D_{2} $ and $ D_{3} $ are complete and by Claim 2, we have $ \left|I\cap \left(V(D_{2})\cup V(D_{3})\right)\right|=1 $, so $\left|I\cap S_{2}\right|\geq 2$. Let $ I\cap \left(V(D_{2})\cup V(D_{3})\right)=\left\{u\right\} $ and $\left\{w_{1}, w_{2}\right\}\subseteq  I\cap S_{2}$. Then $\left\{w_{1}, w_{2}\right\}\cap N_{G}(u)=\emptyset$. Without loss of generality, assume that $ u\in V(D_{2})$. Let  $ v\in V(D_{3}) $.  By Claim 2, we have  $ \left\{w_{1}, w_{2}\right\}\subseteq N_{G}(v)$. By assumption 2 that $\alpha\left(D_{1}-N_{D_{1}}(w_{1}w_{2})\right)\geq 2$, $ N_{D_{1}}(v)=\emptyset $ and $\left\{w_{1}, w_{2}, v\right\}\cap N_{G}(u)=\emptyset$, we have $ \alpha(G-N_{G}(w_{1}vw_{2}))\geq 3 $, a contradiction to the $(P_{3}\cup 3P_{1})$-freeness of $G$.
\end{proof} 

\noindent{\bf Claim  4.}	If $ \tau(D_{1})>1 $, $ \tau\left(G_{1}\right)>1 $ and $ G_{1} $ is $ (P_{3}\cup P_{1}) $-free, then $ G $ is hamiltonian. 

\begin{proof}
	By $ \kappa(G)\geq 30 $, there exist two disjoint paths $ P_{1} $ and $ P_{2} $ in $ G $ between $ V(G_{1}) $ and $ V(D_{1}) $ with all the internal vertices in $S_{1}$. For each $ i\in \left\{1, 2 \right\}$, let $ x_{i}$ and $ y_{i}$ be two ends of $ P_{i} $, where $ x_{i}\in V(G_{1})$ and $y_{i}\in V(D_{1}) $.  By Lemma \ref{result 5}, let $ Q_{1} $  be a hamiltonian path of $ G_{1} $ with two ends $ x_{1} $ and $ x_{2} $, and  $ Q_{2} $  be a hamiltonian path of $ D_{1} $ with two ends $ y_{1} $ and $ y_{2} $. Note that $C=x_{1}Q_{1}x_{2}P_{2}y_{2}Q_{2}y_{1}P_{1}x_{1}$ is a cycle containing all the vertices in $ V(G_{1})\cup V(D_{1}) $, so $V(G)\backslash V(C)\subseteq S_{1}$. Applying Lemma \ref{result 4} on the vertices in $V(G)\backslash V(C)$, we can obtain a hamiltonian cycle of $G$. 
\end{proof}

Let $ T $  be a tough set of $ D_{1} $. By Lemma \ref{result 0}(ii), each component of $D_{1}-T$ is complete.

\noindent{\bf Claim 5.} Assume $ 0<\tau(D_{1})\leq 1 $.  Then $ \left|T\right|\leq w\left(D_{1}- T\right)< \frac{n}{112}-2$.  If there exists a vertex $ w\in S $ such that $0< \left|N_{D_{1}-T}(w)\right|\leq \frac{n}{16}-1 $, or  $ D_{1}-T $ has at most two nontrivial components, then there exists a desired clique.

\begin{proof}
	As $ \tau(D_{1})\leq 1 $, we have  $\left|T\right|\leq w\left(D_{1}-T\right) $. Note that $ T\cup S $ is a cutset of $ G $. So
	$$ 15\leq \frac{\left|T\cup S\right|}{w\left(G-(T\cup S)\right)}\leq \frac{w\left(D_{1}- T\right)+\frac{n}{8}-2}{w\left(D_{1}- T\right)+2}$$ and then $ \left|T\right|\leq w\left(D_{1}- T\right)< \frac{n}{112}-2$. If $ w\left(D_{1}-T\right)=2 $, then since each component of $D_{1}-T$ is complete and $\left|V(D_{1})\right|\geq \frac{7n}{8}$, a largest component of $D_{1}-T$ is a desired clique. Thus we assume $w\left(D_{1}-T\right)\geq 3 $.

	If there exists a vertex $ w\in S $ such that $0< \left|N_{D_{1}-T}(w)\right|\leq \frac{n}{16}-1 $, then by Lemma \ref{result 0}(iii), $ w $ is adjacent to all the vertices in at least $ w(D_{1}-T)-2 $ components of $ D_{1}-T $, so  there exist $ w\left(D_{1}-T\right)-2 $ components in $D_{1}-T$ containing at most $ \frac{n}{16}-1 $ vertices. If there exist at most
	2 nontrivial components in $ D_{1}-T $, then as $w\left(D_{1}- T\right)<\frac{n}{112}-2<\frac{n}{16}+1$, we have that $ w(D_{1}-T)-2 $ trivial components of $ D_{1}-T $ contain less than $ \frac{n}{16}-1 $ vertices.
	Thus in each case, there exists a complete component $ Q_{1} $ of $ D_{1}-T $ such that 
	$$\left|V(Q_{1})\right|\geq\frac{\left|V(D_{1})\right|-\left|T\right|-\left(\frac{n}{16}-1\right)}{2}> \frac{\frac{7}{8}n-(\frac{n}{112}-2)-(\frac{n}{16}-1)}{2}= \frac{45n}{112}+\frac{3}{2}.$$ 
	Note that $ N_{G}(Q_{1})\subseteq S\cup T $.
	So
	$$ \left|N_{G}(Q_{1})\right|<
	\frac{n}{8}-2+\left(\frac{n}{112}-2\right)=\frac{15n}{112}-4.$$
	Hence $ \left|V(Q_{1})\right|-2\left|N_{G}(Q_{1})\right|> \frac{15}{112}n+\frac{19}{2}>2 $. 
\end{proof}

In the following, we consider two cases according to  $\alpha(G[S_{2}])$.		

\noindent{\bf Case 1.}	$ \alpha(G[S_{2}])\leq 2 $.

By Claims 2 and 3, we have $ \alpha(G_{1})=2 $, so $ G_{1} $ is $ (P_{3}\cup P_{1}) $-free. Let $ S_{21}=N_{S_{2}}(D_{2}) $, $ S_{22}=N_{S_{2}}(D_{3}) $, $ G_{11}=G[S_{21}\cup V(D_{2})] $ and $G_{12}=G[S_{22}\cup V(D_{3})]$.

\noindent{\bf Case 1.1.} $ 0<\tau(D_{1})\leq 1 $.

By Claim 5, we have $ \left|T\right|<\frac{n}{16}-1 $. So by the definition of $ S_{1} $, each vertex in $S_{1}$ has a neighbor in $D_{1}-T$. Again by the defintion of $ S_{2} $ and Claim 5, we assume that $ S_{2}\cap N_{G}(D_{1}-T)=\emptyset $, $D_{1}-T$ has at least three nontrivial components, and each vertex in $S_{1}$ has more than $\frac{n}{16}-1$ neighbor in $D_{1}-T$. Then $ N_{D_{1}}(S_{2})\subseteq T $.  Lemma \ref{result 0}(iii) implies that $ V(G_{11})\cap V(G_{12})=\emptyset $ and $E_{G}(G_{11}, G_{12})=\emptyset  $. Thus $ w\left(G-\left(S_{1}\cup T\right)\right)\geq 5 $. By the $(P_{3}\cup P_{1})$-freeness of $D_{1}$ and Lemma \ref{result 0}(iii), each vertex in $ T $ has  $\left|V\left(D_{1}-T\right)\right|>\frac{n}{16}-1 $ neighbors in $ D_{1}-T $. Therefore, by Lemma \ref{result 11}, $ G $ is hamiltonian.

\noindent{\bf Case 1.2.} $ \tau(D_{1})>1 $. 

By Claim 4, we assume $ \tau(G_{1})\leq 1 $. If $G_{1}$ is connected, then let $T_{1}$ be a tough set of $G_{1}$. So $ 1\leq \left|T_{1}\right| \leq w(G_{1}-T_{1})\leq  \alpha(G_{1})= 2 $.
Let $X=S_{1}\cup V(D_{1})$.
As $\delta(G)\geq \kappa(G)\geq 30 $ and each component of $ G_{1}-T_{1} $ is complete by Lemma \ref{result 0}(ii), there exists a hamiltonian path $ P_{1} $ of $G_{1}$ with two ends $a$ and $b$ such that  
$ \left|N_{X}(a)\right|\geq 1 $, $ \left|N_{X}(b)\right|\geq 1 $ and $ \left|N_{X}(ab) \right|\geq 2$.    Let $ w_{1}\in N_{G}(a)\cap X $ and $ w_{2}\in N_{G}(b)\cap X $ be two distinct vertices. By Lemma  \ref{result 5} and the definition of $S_{1}$, there exists a path $ P_{2} $ between $w_{1}$ and $w_{2}$ that covers all the vertices in $ D_{1} $ and all the internal vertices are in $X$. Then $ C=w_{1}aP_{1}bw_{2}P_{2}w_{1} $ is cycle such that $ V(D_{1})\cup V(G_{1})\subseteq V(C) $. Applying Lemma \ref{result 4}  on the vertices of $S_{1}$, $ G $ is hamiltonian.

Thus we assume $ G_{1} $ is not connected. Claim 2 implies that $ V(G_{11})\cap V(G_{12})=\emptyset $ and $E_{G}(G_{11}, G_{12})=\emptyset  $. By Lemma \ref{result 0}(ii),  we have that $ G_{11} $ and $ G_{12}$ are complete. 

By $ V(G_{11})\cap V(G_{12})=\emptyset $, if $ S_{2}\neq \emptyset $, then as $S$ is a propoer cutset of $G$, we have that each vertex in $ S_{2} $ has a neighbor in $ D_{1} $. 
For each $ i\in \left\{1, 2\right\} $, if $ S_{2i}\neq\emptyset $, then let $ s_{i}\in S_{2i} $, otherwise, let $ s_{i}\in N_{S_{1}}(G_{1i}) $. Let $ w_{i}\in N_{D_{1}}(s_{i}) $ and $ e_{i}=s_{i}w_{i}$. 
Let $ P^{\prime} $ be the shortest path in $ D_{1} $ between $w_{1}$ and $w_{2}$. Then since $ D_{1} $ is $ (P_{3}\cup P_{1}) $-free, we have $ \left|V(P^{\prime})\right|\leq 4 $.  Let $ D_{11}=D_{1}-V(P^{\prime}) $. Then $ D_{11} $ is $ (P_{3}\cup P_{1}) $-free. Note that $N_{G}(G_{11})\cup N_{G}(G_{12})\subseteq S_{1}\cup V(D_{1}) $.
By $ \kappa(G-\left(V(P^{\prime})\cup\left\{s_{1}, s_{2}\right\}\right))\geq 24 $ and the definition of $ S_{1} $, there exist at least 3 disjoint edges between $ \left(S_{2i}\cup N_{S_{1}}(G_{1i})\right)\backslash\left\{s_{i}\right\} $ and $ V(D_{11}) $ for each $ i\in [1, 2] $. 
Let $ e_{3}=s_{3}w_{3} $ and $ e_{4}=s_{4}w_{4}$ be two disjoint edges such that $ s_{3}\in \left(S_{21}\cup N_{S_{1}}(G_{11})\right)\backslash\left\{s_{1}\right\} $, $ s_{4}\in \left(S_{22}\cup N_{S_{1}}(G_{12})\right)\backslash\left\{s_{2}\right\} $ and $\left\{ w_{3}, w_{4}\right\}\subseteq V(D_{11}) $. Then since $ G_{11} $ and $ G_{12} $ are complete, there exists a path $ P $ from $s_{3}$ to $s_{4}$  such that  $ V(P)=V(G_{1})\cup V(P^{\prime})\cup\left\{s_{3}, s_{4}\right\} $.

If $ \tau(D_{11})>1 $, then by Lemma \ref{result 5}, there is a hamiltonian path $ P_{2} $ of $ D_{11} $ from $ w_{3} $ to $ w_{4} $. 
Then $C= w_{3}P_{2}w_{4}s_{4}Ps_{3}w_{3} $ is a cycle such that $ V(D_{1})\cup V(G_{1})\subseteq V(C) $. Applying Lemma \ref{result 4} resursively on vertices of $ S_{1} $, we can obtain a hamiltonian cycle of $ G $. 	Hence we assume $ \tau(D_{11})\leq 1 $.
If $ D_{11} $ is connected, then let $ T_{2} $ be a tough
set of $ D_{11} $, otherwise, we define $ T_{2}=\emptyset $. 
As $\tau(D_{11})\leq 1$ and $\left|V(P^{\prime})\right|\leq 4$, we have $\left|T_{2}\cup V(P^{\prime})\right|\leq w\left(D_{11}-T_{2}\right)+4$. Replace the role of $T$ in Case 1.1 with $T_{2}\cup V(P^{\prime})$. By a similar argument as Case 1.1 and by $n\geq 496$, we know that the lemma holds.

\noindent{\bf Case 2.} $ \alpha(G[S_{2}])\geq 3 $.

\noindent{\bf Claim 6.} For any three pairwise nonadjacent vertices $ a, b, c\in S_{2} $, $D_{1}-N_{D_{1}}(abc) $ has exactly 2 complete components.
\begin{proof}
	By Claim 2, assume $\left\{a, b\right\}\sim D_{2}$. Let $u\in D_{2}$. Consider the path $aub$ and the vertices in $D_{1}-N_{D_{1}}(abc)$. By assumption 2, the $(P_{3}\cup 3P_{1})$-freeness of  $G$ and $N_{D_{1}}(u)=\emptyset$, we have that $D_{1}-N_{D_{1}}(abc)$ has exactly 2 complete components. 
\end{proof}

If there exist three pairwise nonadjacent vertices $ a_{1}, a_{2}, a_{3}\in S_{2} $ such that one component of  $D_{1}-N_{D_{1}}(a_{1}a_{2}a_{3})$ containing at most $\frac{n}{16}-1$ vertices, then by Claim 6, the other component is a desired clique.		
So we assume for any three independent vertices $ a, b$ and $c $ in $ S_{2}$, each component of  $D_{1}-N_{D_{1}}(abc)$ contains more than $\frac{n}{16}-1$ vertices, call this {\bf Assumption 3.}
Then since each vertex in $G_{1}$ has at most $\frac{n}{16}-1$ neighbors in $D_{1}$ and $ G $ is $ (P_{3}\cup 3P_{1}) $-free, we have that $ G_{1} $ is $ \left(P_{3}\cup P_{1}\right) $-free.

\noindent{\bf Claim 7.}	Let $a$ be a vertex in $S_{2}$ contained in an independent set of order 3 in $G[S_{2}]$. Then $ V(D_{2})\cup V(D_{3})\subseteq N_{G}(a) $. 

\begin{proof}
	Suppose to the contrary that there exists a vertex $ u\in V(D_{2})\cup V(D_{3}) $ such that $ au\notin E(G) $. Let $ b $ and $ c $ be two nonadjacent vertices in $ S_{2} $ such that $ \left\{b, c\right\}\cap N_{G}(a)=\emptyset $. Since  $ G_{1} $ is $ \left(P_{3}\cup P_{1}\right) $-free, we have $ ub\notin E(G) $ or $ uc\notin E(G) $. Assume  $ ub\notin E(G) $ and $ u\in V(D_{2}) $. Let $ v\in V(D_{3}) $. Then by Claim 2, we have $ \left\{a, b\right\}\subseteq  N_{G}(v) $. By Claim 6 and  $ u\notin N_{G}(avb) $, we can obtain $ \alpha (G-N_{G}(avb))\geq 3 $, which contradicts the $ \left(P_{3}\cup 3P_{1}\right) $-freeness of $ G $.
\end{proof}

By Claim 7, there exists a vertex in $ s\in S_{2} $ such that $ V(D_{2})\cup V(D_{3})\subseteq N_{G}(s) $. Then $ D_{1}-T $ has at most 2 nontrivial components, or $0<\left|N_{D_{1}-T}(s)\right|\leq \frac{n}{16}-1$ by Lemma \ref{result 0}(iii). If $ \tau(D_{1})\leq 1 $, then by Claim 5, there exists a clique $Q_{1}$ such that $\left|V(Q_{1})\right|-2\left|N_{G}(Q_{1})\right|\geq 2$.
Hence we assume $ \tau(D_{1})> 1$. And by Claim 4, we assume $ 0<\tau(G_{1})\leq 1$. Let $T_{1}$ be a tough set of $G_{1}$.  By Lemma \ref{result 13} and Claim 3, we have $w\left(G_{1}-T_{1}\right)=\alpha(G_{1})= \alpha (G[S_{2}])\geq 3$.
By Lemma \ref{result 0}(iii), each vertex in $ T_{1} $ is adjacent to all the vertices in $ G_{1}-T_{1} $, this implies that $ V(D_{2})\cup V(D_{3})\subseteq T_{1} $ or $V(D_{2})\cup V(D_{3})\subseteq V(G_{1}-T_{1})$. 
Moreover, by Claim 2 and $w\left(G_{1}-T_{1}\right)\geq 3$,  we have $ V(D_{2})\cup V(D_{3})\subseteq T_{1} $, so $V(G_{1}-T_{1})\subseteq S_{2}$. 

Choose an arbitrary vertex from each component of $G_{1}-T_{1}$. Denote the set of these vertices by $U$. Then $U$ is independent. Let $\ell=\left|U\right|$, $W=N_{D_{1}}(G_{1}-T_{1})$, and  $W_{0}\subseteq W$ be the set of vertices not adjacent to any vertex in $D_{1}-W$. Then $\ell=w\left(G_{1}-T_{1}\right)\geq 3$.

\noindent{\bf Claim 8.}	 For any three pairewise nonadjacent vertices $a, b, c$ in $U$, we have $W=N_{D_{1}}(abc)$.  Consequently, each vertex in $W$ is adjacent to all the vertices in at least $\mathcal{l}-2$ component of $G_{1}-T_{1}$. Moreover, each vertex in $W_{0}$ is adjacent to all the vertices in $G_{1}-T_{1}$.

\begin{proof}
	By Claim 6, let $x$ and $y$ be two vertices in two distinct  components of $D_{1}-N_{G}(abc)$, respectively. Let $u\in T_{1}$. By Lemma \ref{result 0}(iii), we have $V(G_{1}-T_{1})\subseteq N_{G}(u)$.
	
	It is sufficient to prove that $W\subseteq N_{G}(abc)$. Suppose to the contrary that there exists a vertex $z\in W$ with $z\in W\backslash N_{G}(abc)$.  
	Consider the path $aub$ and three vertices $ x, y, z$. Since $G$ is $(P_{3}\cup 3P_{1})$-free, we have $xz\in E(G)$ or $yz\in E(G)$. Assume $xz\in E(G)$. Let $d\in N_{G_{1}-T_{1}}(z)$. Then $\left|\left\{a, b, c\right\}\cap N_{G}(d)\right|\leq 1$. Assume $\left\{a, b\right\}\cap N_{G}(d)=\emptyset$.  Cosider the path $xzd$ and the vertices $a, b, y$. Since $G$ is $(P_{3}\cup 3P_{1})$-free, we have $yz\in E(G)$. Now the path $xzy$ and the vertices $a, b, c$ induce a copy of $P_{3}\cup 3P_{1}$, a contradiction.

Suppose to the contrary that there exists a vertex $w_{0}\in W_{0}$ with $V(G_{1}-T_{1})\backslash N_{G}(w_{0})\neq\emptyset$. Let $a_{1}\in V(G_{1}-T_{1})\backslash N_{G}(w_{0})$.  If $\left|N_{U}(w_{0})\right|\geq 2$, then let $ b_{1}, c_{1}\in N_{U}(w_{0}) $.  Now the path $b_{1}w_{0}c_{1}$ and three vertices $a_{1}, x, y$ induce a copy of $P_{3}\cup 3P_{1}$, a contradiction. If $\left|N_{U}(w_{0})\right|=1$, then as $\ell\geq 3$, let $b_{2}, c_{2}\in U\backslash N_{G}(w_{0}) $. Consider the path $b_{2}uc_{2}$ and three vertices $w_{0}, x, y$ induce a copy of $P_{3}\cup 3P_{1}$, a contradiction.
\end{proof}

By Claims 6 and 8, $D_{1}-W$ has exactly two complete components. If $\left|W\right|\leq \frac{n}{16}-1 $, then there exists a component $D$ of $D_{1}-W$ such that $\left|V(D)\right|-2\left|N_{G}(D)\right|\geq 2$. So we assume $\left|W\right|>\frac{n}{16}-1 $.  
 Suppose that any two vertices $x, y\in W$ satisfy $\left|N_{U}(xy)\right|\leq \ell-2$. Then Claim 8 implies that all the vertices in $W$ are only adjacent to  $\ell-2$ common vertices in $U$. Let $a, b\in U\backslash N_{G}(W)$ and  $c\in U\backslash\left\{a, b\right\}$. Then by Claim 8 and  $ V(G_{1}-T_{1})\subseteq S_{2} $, we have $\left|W\right|=\left|N_{D_{1}}(abc)\right|=\left|N_{G}(c)\right|\leq \frac{n}{16}-1 $, a contradiction.  So there exist two vertices $y_{1}, y_{2}\in W$ with 
$\left|N_{U}(y_{1}y_{2})\right|\geq \ell-1$.

Since $ W\cup 
V(D_{2}) \cup V(D_{3})\cup (S\backslash U)$ is a cutset of $ G $, we have  $$  \frac{\left|W\right|+\left|V(D_{2}) \cup V(D_{3})\cup (S\backslash U)\right|}{\ell+2}\geq 15.$$ By Claim 8 and the definition of $S_{2}$, we have $ \left|W\right|\leq \frac{3n}{16}-3 $. As $\left|V(D_{2}) \cup V(D_{3})\cup S\right|= n-\left|V(D_{1})\right|\leq \frac{n}{8}$, we have $ \mathcal{l}\leq \frac{5n}{256}-1 $. If $ \left|W\right|\leq \ell-2\leq \frac{5n}{256}-3 $, then there exists a clique $ Q_{1} $ such that $$\left|V(Q_{1})\right|\geq \frac{\left|V(D_{1})\right|-\left|W\right|}{2}\geq \frac{\frac{7}{8}n-\left(\frac{5n}{256}-3\right)}{2}=\frac{219}{512}n+\frac{3}{2}.$$ Since $ N_{G}(Q_{1})\subseteq W\cup S $, we have $ 2\left|N_{G}(Q_{1})\right|\leq 2\left(\frac{5n}{256}-3+ \frac{n}{8}-2\right)=\frac{37}{128}n-10 $. Thus $ \left|V(Q_{1})\right|-2\left|N_{G}(Q_{1})\right|\geq 2 $. Hence we assume  $\left|W\right|\geq \max\left\{\ell-1, \frac{n}{16}-1\right\} $.

  Let $k=\max\left\{\ell-\left|T_{1}\right|+1, 2\right\}$. If $\left|W_{0}\right|\geq k $, then let $ W^{\prime}\subseteq W_{0} $ be a set of size $k$.  If $0\leq \left|W_{0}\right|<k $, then let $ W^{\prime}\subseteq W_{0} $ be a set such that $ W_{0}\subseteq W $, $ \left|W^{\prime}\right|=k $, and there exist 2 vertices in $W^{\prime}$ adjacent to at least $\ell-1$ vertices in $U$. In each case, there exist 2 vertices $y_{1}, y_{2}\in W^{\prime}$ such that $\left|N_{U}(y_{1}y_{2})\right|\geq \ell-1$.
Let $W_{1}=W^{\prime}-\left\{y_{1}, y_{2}\right\}$.  

\noindent{\bf Claim 9.} There exists a hamiltonian path of $G\left[V(G_{1})\cup W^{\prime}\right]$ with two ends $y_{1}$ and $y_{2}$.

\begin{proof}
	Let $B_{1}, B_{ 2}\dots, B_{\ell}$ be the components of $G_{1}-T_{1}$. If $k=2$, then $W_{1}=\emptyset$ and $\mathcal{l}-1\leq \left|T_{1}\right|\leq l$. 
	Let $u\in V(D_{2})$ and $v\in V(D_{3})$. By Claim 2 and as $D_{2}$ and $D_{3}$ are complete, each vertex in $G_{1}\backslash\left\{u, v\right\}$ is adjacent to $u$ or $v$. Moreover, as $\left\{u, v\right\}\subseteq V(D_{2})\cup V(D_{3})\subseteq T_{1}$. there exist exactly $\ell-1$ disjoint paths in $G[T_{1}]$ covering all the vertices in $T_{1}$.	
	Since $\left|N_{U}(y_{1}y_{2})\right|\geq \ell-1$, we assume $N_{B_{ 1}}(y_{1})\neq\emptyset$ and $N_{B_{ \ell}}(y_{2})\neq \emptyset$. Let $a\in N_{B_{ 1}}(y_{1})$ and $b\in N_{B_{ \ell}}(y_{2})$.
	Since each component of $G_{1}-T_{1}$ is complete and each vertex in $T_{1}$ is adjacent to all the vertices in $G_{1}-T_{1}$, there exists a hamiltonian path $P$ of $G_{1}$ between $a$ and $b$. Then $y_{1}aPby_{2}$ is a desired path. 
	
	Thus we assume $k\geq 3$. Then $k=\ell-\left|T_{1}\right|+1$.  Let $W_{1}=\left\{y_{3}, \dots, y_{k}\right\}$. As $\left|T_{1}\right|\geq \left|V(D_{2})\cup V(D_{3})\right|\geq 2$, we have $\mathcal{l}\geq 4$. If $k=3$, then by Claim 8, assume $y_{3}\sim \left\{B_{1}, B_{ 2}\right\}$. If $k\geq 4$, then by Claim 8, assume $\left\{y_{i+2}, y_{i+3}\right\}\sim B_{i}$ for each $i\in [1, k-3]$,  $y_{3}\sim B_{k-2}$ and $y_{k}\sim B_{k-1}$. Since $\left|N_{U}(y_{1}y_{2})\right|\geq \ell-1$, we assume $N_{B_{k-2}}(y_{1})\neq\emptyset$ and $N_{B_{k}}(y_{2})\neq \emptyset$. Since each component of $G_{1}-T_{1}$ is complete, there exsits a hamiltonian path $P_{1}$ of $G\left[\left(\bigcup_{i=1}^{k-2} V(B_{i})\cup W_{1}\cup\left\{y_{1}\right\}\right)\right]$ from $ y_{1} $ to $y_{k}$. Let $a\in N_{B_{k-2}}(y_{k})$ and $b\in N_{B_{k}}(y_{2})$. Since each vertex in $T_{1}$ is adjacent to all the vertices in $G_{1}-T_{1}$, there exists a hamiltonian path of $G\left[\left(\bigcup_{i=k-1}^{\mathcal{l}} V(B_{i})\cup T_{1}\right)\right]$ from $a$ to $b$.  Thus $y_{1}P_{1}y_{k}aP_{2}by_{2}$ is a hamiltonian path of $G[V(G_{1})\cup W^{\prime}]$.
\end{proof}


Let $ H_{1}=D_{1}-W_{1} $.
If $\tau(H_{1})>1$, then by Lemma \ref{result 5},  $ H_{1} $ has a hamiltonian path from $y_{1}$ to $y_{2}$. By Claim 9, we can obtain a cycle covering all the vertices in $V(D_{1})\cup V(G_{1})$. Applying Lemma \ref{result 11} on the remaining vertices, we can obtain a hamiltonian cycle of $G$. 
Hence we assume $ \tau(H_{1})\leq 1 $. Let $ T_{2} $ be  a tough set of  $H_{1}$. Note that $V\left(H_{1}- T_{2}\right)\cap V(D_{1}-W)\neq\emptyset$. Otherwise, we have $V(D_{1}-W)\subseteq T_{2}$ and $V(H_{1}-T_{2})\subseteq W$. As  $ \left|V(D_{1})\right|\geq \frac{7n}{8} $ and $\left|W\right|\leq \frac{3n}{16}-3$, we have $\frac{\left|T_{2}\right|}{w\left(H_{1}-T_{2}\right)}>\frac{\left|V(D_{1}-W)\right|}{\left|W\right|}>1$, a contradiction. Suppose that $w\left(H_{1}- T_{2}\right)\geq 3$. Then by Lemma \ref{result 0}(iii), every vertex in $T_{2}$ is adjacent to all the vertices in $H_{1}-T_{2}$. Since $V(H_{1}-T_{2})\cap V(D_{1}-W)\neq\emptyset$, $w\left(D_{1}-W\right)=2$ and  each component of $D_{1}-W$ is complete, we have that there exist two components $B_{1}$ and $B_{2}$ of $H_{1}-T_{2}$ containing all the verictes in two components of $ D_{1}-W $, respectively. 
 This implies that $ W_{0}\cap T_{2}=\emptyset$, so $V(H_{1})\cap W_{0}\subseteq H_{1}-T_{2}$. Since each vertex in $W-W_{0} $ is adjacent to all the vertices in one components of $D_{1}-W$ by Lemma \ref{result 0}(ii), we have $V(H_{1})\cap W-W_{0}\subseteq V(H_{1}-T_{2})$. Combining with  $w\left(H_{1}- T_{2}\right)\geq 3$, we have $\emptyset\neq V(H_{1})\cap W_{0}\subseteq V\left(H_{1}-T_{2}\right)$. By the constrcution of $W^{\prime}$, we have $\left|W_{0}\right|\geq k$ and $W_{1}\subsetneqq W^{\prime}\subseteq W_{0}$. Note that $T_{2}\cup W_{1}$ is a cutset of $D_{1}$. By the defintion of $W_{0}$,  we have $V(B_{1})-W_{1}\neq\emptyset$ and  $V(B_{2})-W_{1}\neq\emptyset$. Lemma 2.1(iii) implies that each vertex has neighbors in at most one component of $ D_{1}-\left(W_{1}\cup T_{2}\right) $. Again, by Lemma \ref{result 0}(iii), we have 
 $w\left(D_{1}-T_{2}\right)\geq w\left(H_{1}- T_{2}\right)\geq \left|T_{2}\right|$, hence $\tau(D_{1})<1$, which contradicts our assumption that $\tau(D_{1})>1$.
  Thus $w\left(H_{1}- T_{2}\right)=2$.
As $ \tau(H_{1})\leq 1 $, we have $ \left|T_{2}\right|\leq 2 $. Since $\left|V(D_{1})\right|\geq \frac{7n}{8}$, $ \ell\leq \frac{5n}{256}-1 $ and $\left|W_{1}\right|\leq k-2\leq \max \left\{\ell-3, 0\right\} $, there exists a complete component $ Q_{1} $ of  $ H_{1}-T_{2}$ such that $ \left|V(Q_{1})\right|-2\left|N_{G}(Q_{1})\right|\geq 2 $.
\end{proof}

The following lemmas play a crucial role in the proof of Theorem \ref{result 12}.

\begin{lem}[\cite{V}]\label{result 7}
Let $ G $ be a $ k $-connected graph on at least $  3 $ vertices.

\begin{enumerate}[\rm(i)]	
	\item If $ \kappa(G)\geq \alpha(G)-1 $, then $ G $ has a hamiltonian path.
	
	\item If $\kappa (G)\geq \alpha(G)$, then $ G $ is hamiltonian.
	
	\item If $ \kappa(G)\geq \alpha(G)+1 $, then $ G $ is hamiltonian-connected. 
\end{enumerate}	
\end{lem}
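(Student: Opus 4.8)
This is the classical Chv\'atal--Erd\H{o}s theorem, and the plan is to prove it by the standard ``rotation / longest structure'' argument: establish (ii) via a longest cycle, then derive (i) and (iii) from it. Throughout, the complete case is immediate, so I would assume $\alpha(G)\ge 2$ (hence $\kappa(G)\ge 2$ and $G$ contains a cycle).

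\textbf{Part (ii).} I would take a \emph{longest} cycle $C$ of $G$, fix an orientation of $C$, and write $x^{+}$ for the successor of $x\in V(C)$. Suppose $C$ is not hamiltonian; let $H$ be a component of $G-V(C)$ and let $X=N_{C}(H)$ be the set of vertices of $C$ with a neighbour in $H$. The engine of the proof is the \emph{routing claim}: if either $x^{+}$ has a neighbour in $H$ for some $x\in X$, or $x^{+}y^{+}\in E(G)$ for distinct $x,y\in X$, then by deleting the edges $xx^{+}$ (and $yy^{+}$) from $C$, reinserting a path through $H$ between $x$ and $H$ (resp.\ between $x$ and $y$), and closing up via the offending adjacency, one obtains a cycle strictly longer than $C$ --- a contradiction. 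Taking $y=x^{+}$ in the routing claim shows $X$ has no two consecutive vertices on $C$; in particular $x\mapsto x^{+}$ is injective on $X$ and $V(C)\setminus X\ne\emptyset$, so (as $H$ reaches the rest of $C$ only through $X$) $X$ is a vertex cut and $|X|\ge\kappa(G)$. Finally fix $h\in V(H)$: the routing claim forbids every adjacency inside $\{h\}\cup\{x^{+}:x\in X\}$, so this is an independent set of size $|X|+1\ge\kappa(G)+1\ge\alpha(G)+1$, contradicting the definition of $\alpha(G)$. Hence $C$ is hamiltonian.

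\textbf{Part (i).} I would reduce to (ii): if $G$ is noncomplete, let $G'=G+w$ where $w$ is joined to every vertex of $G$. Then $\alpha(G')=\alpha(G)$ and $\kappa(G')=\kappa(G)+1\ge\big(\alpha(G)-1\big)+1=\alpha(G')$, so $G'$ is hamiltonian by (ii); deleting $w$ from a hamiltonian cycle of $G'$ yields a hamiltonian path of $G$. For \textbf{part (iii)} I would run the same rotation argument on a \emph{longest $u$--$v$ path} $P$ for given distinct $u,v$: if $P$ is not spanning, take a component $H$ of $G-V(P)$ with attachment set $X=N_{P}(H)$; analogous splicing lemmas give $|X|\ge\kappa(G)$ and show that an appropriate set of shifts of $X$ along $P$, together with a vertex of $H$ and an endpoint of $P$, is independent of size at least $|X|+2\ge\kappa(G)+2>\alpha(G)$ --- the extra unit in the hypothesis is exactly what pays for the two path endpoints. (Alternatively one may adjoin to $G$ a vertex $z$ joined only to $u$ and $v$ and rerun the cycle argument, observing that a longest cycle must pass through $z$, which reduces (iii) to (ii) after a little extra care about attachment sets near the degree-$2$ vertex.)

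\textbf{Main obstacle.} The one step that needs genuine care is the routing claim of (ii): in each configuration --- including the degenerate subcase of two consecutive attachment vertices --- one must verify that the rerouted closed walk is truly a \emph{simple} cycle spanning $V(C)$ plus at least one vertex of $H$, not merely a closed walk; this is a short, mechanical case analysis but is where all the content sits. For (iii) there is the additional wrinkle that $u$ or $v$ may itself lie in $X$, which dictates choosing predecessors rather than successors along $P$ at the right places and is precisely why (iii) needs $\kappa(G)\ge\alpha(G)+1$.
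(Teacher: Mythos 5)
This lemma is not proved in the paper at all: it is the classical Chv\'atal--Erd\H{o}s theorem, quoted with a citation to \cite{V}, so there is no in-paper argument to compare against; what you give is essentially the standard textbook proof of that theorem. Your part (ii) is the canonical longest-cycle argument and is correct, and your reduction for part (i) (add a universal vertex $w$, note $\alpha(G+w)=\alpha(G)$ and $\kappa(G+w)=\kappa(G)+1$, apply (ii), delete $w$) is also fine. Two remarks on part (iii). First, your claimed independent set of size $|X|+2$ ``together with a vertex of $H$ and an endpoint of $P$'' is not justified: since the ends $u,v$ of the path are prescribed, maximality of $P$ gives you nothing at the ends, so $u$ may well be adjacent to $H$ or to some successor $x^{+}$, and it cannot in general be added to the independent set. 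Fortunately you do not need it: with $X=N_{P}(H)$ one has $|X|\geq\kappa(G)\geq\alpha(G)+1$, and the set $\{x^{+}:x\in X\setminus\{v\}\}\cup\{h\}$ already has size at least $|X|>\alpha(G)$, which is the contradiction; the ``$+1$'' in the hypothesis pays only for the possible loss of the successor of $v$, not for two endpoints. Second, the parenthetical alternative of adjoining a vertex $z$ adjacent only to $u$ and $v$ cannot be used as a black-box reduction to (ii), because $\kappa(G+z)=2$ (remove $u$ and $v$) while $\alpha(G+z)$ can be as large as $\alpha(G)+1$; one must rerun the longest-cycle argument in $G+z$ using $|N(H)|\geq\kappa(G)$ for components $H$ avoiding $z$, which is more than the ``little extra care'' you allow for. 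With these corrections your primary route for (iii) is the standard and correct one.
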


\begin{lem}[\cite{Shan2}]\label{result 3}
Let $ t>0 $ be a real number and $ G $ be
a  $ t $-tough graph on $ n\geq 3 $ vertices. If the degree sum of any two nonadjacent vertices of $ G $ is greater than $ \frac{2n}{t+1}-2 $, then  $ G $ is hamiltonian.
\end{lem}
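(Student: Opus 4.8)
The plan is to argue by contradiction through a longest cycle, using Lemma \ref{result 4} as the engine. We may assume $G$ is noncomplete (otherwise it is trivially hamiltonian for $n\ge 3$), so $G$ is $\lceil 2t\rceil$-connected; a short separate check shows the degree-sum hypothesis forces $G$ to be $2$-connected, hence to contain cycles. Suppose $G$ is not hamiltonian and let $C$ be a longest cycle of $G$, given an orientation, with $R:=V(G)\setminus V(C)\neq\emptyset$. The key observation is that the \emph{entire} problem reduces to a single degree condition on $C$: if some $x\in R$ satisfies $d_{C}(x)>\frac{n}{t+1}-1$, then Lemma \ref{result 4} produces a cycle $C'$ with $V(C')=V(C)\cup\{x\}$, which is longer than $C$, contradicting maximality. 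So it suffices to exhibit such a vertex, and for contradiction I assume $d_{C}(x)\le \frac{n}{t+1}-1$ for every $x\in R$.

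Next I would extract the standard rotation/crossing structure. Fix a component $H$ of $G[R]$ and a vertex $x_{0}\in H$, and let $B:=N_{C}(H)$ with successor set $B^{+}$. By the usual longest-cycle rerouting (an edge inside $B^{+}$, or from $B^{+}$ into $H$, combined with a path through the connected $H$, would splice in the whole of $H$ and lengthen $C$), I obtain: $B^{+}$ is independent; no vertex of $B^{+}$ has a neighbor in $H$; and consequently $I:=B^{+}\cup\{x_{0}\}$ is an independent set with $|I|=|B|+1$. The same argument shows each arc of $C$ between consecutive vertices of $B$ has nonempty interior (otherwise a successor would land in $B\subseteq N(H)$), so $|V(C)|\ge 2|B|$.

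Now I would inject the degree-sum hypothesis through the independent set $I$. Since any two members of $I$ are nonadjacent, the hypothesis gives $d(x_{0})+d(y)>\frac{2n}{t+1}-2$ for each $y\in B^{+}$, and in particular at most one vertex of $I$ has degree at most $\frac{n}{t+1}-1$. Using the no-extension assumption $d_{C}(x_{0})\le\frac{n}{t+1}-1$ together with $d_{R}(x_{0})=d_{H}(x_{0})\le|H|-1$, I bound $d(x_{0})\le \frac{n}{t+1}+|H|-2$ and transfer the deficit to the successors, so that every $y\in B^{+}$ has $d(y)>\frac{n}{t+1}-|H|$. This large lower bound on the degrees of the successors, fed back through $d_{C}(y)\le |V(C)|-1$ and the arc structure, forces $|B|$ to be of order $\frac{n}{t+1}$ and, in particular, forces every component of $R$ to be nontrivial. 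This is precisely the step where the minimum-degree proof of Lemma \ref{result 2} is generalized: wherever that argument invokes $\delta>\frac{n}{t+1}-1$ for a single vertex, here one applies the degree-sum bound to the nonadjacent pair consisting of a vertex off $C$ and a successor of one of its $C$-neighbors, tracking how the additive constant changes from $-1$ to $-2$.

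The final and hardest step is to convert this into a violation of $t$-toughness. I would build a cutset $S$ from the attachment/successor data — e.g.\ $N(I)$, or $B$ together with the neighborhoods of the high-degree successors — and count components of $G-S$: the independent set $I$ contributes $|B|+1$ isolated vertices, so $w(G-S)\ge |B|+1$, while toughness demands $|S|\ge t(|B|+1)$. The delicate point, which I expect to be the main obstacle, is calibrating this count so that the two bounds collide. Because the degree-sum hypothesis only yields degrees of order $\frac{n}{t+1}$ rather than a constant-factor excess, the factor $t$ in the toughness inequality must be extracted from the $(|B|+1)$-fold component count, and one must tighten the wasteful estimate $|N(I)|\le n-|I|$ (for instance by identifying further vertices nonadjacent to all of $I$, or by exploiting that the nonempty arcs of $C$ supply extra components once $B^{+}$ is deleted). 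Getting these constants to close — precisely matching the $\frac{2n}{t+1}-2$ threshold against the toughness factor — is where the real work lies; the Chv\'atal--Erd\H{o}s-type Lemma \ref{result 7} does not suffice here, since the available degrees are too small to bound $\alpha(G)$ below $\kappa(G)$ directly, so the longest-cycle counting is unavoidable.
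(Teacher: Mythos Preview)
The paper does not give a proof of this lemma at all: it is quoted verbatim from \cite{Shan2} (Sanka--Shan, 2024) as a known external result, exactly like Lemma~\ref{result 4} is quoted from \cite{Shan1} and Lemma~\ref{result 2} from \cite{Bauer}. So there is no ``paper's own proof'' to compare your proposal against; the present paper uses the statement purely as a black box at the very start of the proof of Theorem~\ref{result 12}.

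As for the proposal itself, it is a reasonable high-level plan (longest cycle, successor independent set, feed the Ore-type bound through a nonadjacent pair $\{x_0,y\}$ with $y\in B^{+}$, then manufacture a cutset violating $t$-toughness), and indeed the original proof in \cite{Shan2} proceeds in this spirit. But by your own admission the argument is incomplete: you stop precisely at the point where the work begins, namely calibrating the cutset $S$ and the component count so that $|S|<t\cdot w(G-S)$ actually follows from the threshold $\frac{2n}{t+1}-2$. Your suggested cutset $N(I)$ with the bound $|N(I)|\le n-|I|$ is far too loose to close, and you do not indicate what the correct $S$ is or how the factor $t$ emerges. So this is a sketch of the standard framework, not a proof; since the paper only cites the result, that is all that can be said here.
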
	
Let $x$ and $y$ be two distinct vertices in $P$ such that $xy\notin E(P)$. Denote the path $xPy-\left\{x, y\right\}$ by $P(x, y)$.

\noindent{\bf Proof of Theorem \ref{result 12}.}
Let $n=\left|V(G)\right|$. By Lemma \ref{result 3}, we assume that there exist two nonadjacent vertices $u, v\in G $ such that  $d_{G}(u)+d_{G}(v)\leq \frac{n}{8}-2 $.     Let $$ N_{u}=\left\{x\in N_{G}(uv): N_{G}(x)\subseteq \left\{u\right\}\cup N_{G}(uv)\right\},$$$$ 
N_{v}=\left\{x\in N_{G}(uv)\backslash N_{u} : N_{G}(x)\subseteq \left\{v\right\}\cup\left(N_{G}(uv)\backslash N_{u}\right) \right\} \text{~and~} S=N_{G}(uv)\backslash\left(N_{u}\cup N_{v}\right).$$ 
Note that $S$ is a proper cutset, $\left|S\right|\leq \left|N_{G}(uv)\right|\leq d_{G}(u)+d_{G}(v)\leq \frac{n}{8}-2$ and $w\left(G-S\right)=w\left(G-N_{G}(uv)\right)\geq 3$. 
If any two components $D_{i}$ and $D_{j}$ of $G-N_{G}(uv)$ satisfy that $$ \left|V(G)\backslash \left(N_{G}(uv)\cup V(D_{i})\cup V(D_{j})\right)\right|>\frac{n}{16}-1 ,$$ then as $\tau(G)\geq 15$, we have that there exist at least 3 nontrivial components in $G-N_{G}(uv)$, so $w\left(G-S\right)=w\left(G-N_{G}(uv)\right)\geq 5$. By Lemma \ref{result 0}(iii),  each vertex in $S$ has more than $\frac{n}{16}-1$ neighbors in $ G-S $. Thus by Lemma \ref{result 11}, $G$ is hamiltonian. In the following, we assume that there exist two components $  D^{\prime}, D^{\prime\prime} $ of $ G-N_{G}(uv) $ such that $$ \left|V(G)\backslash \left(N_{G}(uv)\cup V(D^{\prime})\cup V(D^{\prime\prime})\right)\right|\leq \frac{n}{16}-1 .$$

Let $D_{1}$ be the component of $G-S$ containing all the vertices in a largest component of $G-N_{G}(uv)$. Then $ N_{G}(D_{1})\subseteq S $ and $$ \lvert V(D_{1})\rvert\geq \max\left\{ \left|V(D^{\prime})\right|, \left|V(D^{\prime\prime})\right|\right\}\geq \frac{\left|V(G)\right|-\left|N_{G}(uv)\right|-\left(\frac{n}{16}-1\right)}{2}\geq \frac{13n}{32}+\frac{3}{2}. $$ We claim that $ G $ is hamiltonian, or there exists a clique $ Q_{1} $ in $D_{1}$ such that $ \left|V(Q_{1})\right|-2\left|N_{G}(Q_{1})\right|\geq 2 $. If $ D_{1} $ is complete, then $ D_{1} $ is a desired clique since $ \left|V(D_{1})\right|-2\left|N_{G}(D_{1})\right|\geq \left|V(D_{1})\right|-2\left|S\right|\geq  \frac{13n}{32}+\frac{3}{2}-2\left(\frac{n}{8}-2\right)>2 $. If $ D_{1} $ is  noncomplete, then
Lemma \ref{result 0}(ii) implies that  $ w\left(G-S\right)=w(G-N_{G}(uv))=3 $, so $ \left|V(D_{1})\right|\geq n-\left|N_{G}(uv)\right|-2\geq \frac{7}{8}n $. By Lemma \ref{result 8}, the claim holds.

In the following, we assume there exists a clique $ Q_{1}$ in $D_{1}$ such that $ \left|V(Q_{1})\right|-2\left|N_{G}(Q_{1})\right|\geq 2 $. 
Let $ S^{\prime} $ be a largest subset of $N_{G}(Q_{1})$ such that $ \left|N_{Q_{1}}(S^{\prime})\right|<2\left|S^{\prime}\right| $, $ S^{\prime\prime}=N_{G}(Q_{1})-S^{\prime} $ and $ S^{\ast}=N_{Q_{1}}(S^{\prime})\cup S^{\prime\prime} $. Hence for any subset $ T\subseteq S^{\prime\prime} $, $ \left|N_{Q_{1}}(T)\backslash N_{Q_{1}}(S^{\prime})\right|\geq 2\left|T\right| $. By Lemma \ref{result 1}, $ G $ has a $ K_{1, 2} $-matching   with centers as  vertices of $S^{\prime\prime}$ and  partners in $  V(Q_{1})\backslash N_{Q_{1}}(S^{\prime})$. Let  $ D_{1}^{\ast}=Q_{1}-N_{Q_{1}}(S^{\prime}) $. Then $ \lvert V(D_{1}^{\ast})\rvert=\left|V(Q_{1})\right|-\left|N_{Q_{1}}(S^{\prime})\right|> \lvert V(Q_{1})\rvert-2\left|S^{\prime}\right|\geq 
\lvert V(Q_{1})\rvert-2\left|N_{G}(Q_{1})\right|\geq 2 $ and $ D_{1}^{\ast} $ is complete component of $G-S^{\ast}$. Note that $\emptyset\neq S^{\prime}\cup \left(V(G)\backslash\left(V(D_{1})\cup S\right)\right)\subseteq V(G)\backslash\left(V(Q_{1})\cup S^{\ast}\right) $.

\noindent{\bf Claim 1.} 
If the vertices in $G-\left(V(Q_{1})\cup S^{\ast}\right)$ are covered by a set $ F $ of pairwise disjoint paths  of $ G $ such that two ends of each path in $F$ belong to $ S^{\ast} $, then $ G $ has a hamiltonian cycle.

If $ w\left(G-S^{\ast}\right)\geq 3 $, then from the proof of Theorem 1.2 in \cite{Gao}, the theorem is proved. Thus we assume $ w\left(G-S^{\ast}\right)=2 $. Let $D^{\ast}_{2}$ be the other component of $G-S^{\ast}$.
As $ G $ is $(P_{3}\cup 3P_{1})$-free, this implies that $D_{2}^{\ast} $ is $ \left(P_{3}\cup 2P_{1}\right) $-free.

\noindent{\bf Claim 2.} If $ \alpha(D_{2}^{\ast})\leq 4 $, then $ G $ is hamiltonian. 

\begin{proof}
	If $ \alpha(S^{\ast})\leq 4 $, then $ \alpha (G)\leq 9< \kappa(G) $, so by Lemma \ref{result 7}(ii),  $ G $ is hamiltonian. Hence we assume $ \alpha(S^{\ast})\geq 5 $.
	
	\noindent{\bf Case 1.} $ D_{2}^{\ast} $ is hamiltonian.
	
	Let $ C $ be a hamiltonian cycle of $ D_{2}^{\ast} $ and $ I $ be an independent set of $ S^{\ast} $. By $\kappa(G)\geq 30$, there exist 3 disjoint edges $s_{1}w_{1}, s_{2}w_{2}, s_{3}w_{3}$  such that $ s_{i}\in S^{\ast} $ and $w_{i}\in D_{2}^{\ast}$ for eah $i\in [1, 3]$. If there exists an edge  between $\left\{w_{1}, w_{2}, w_{3}\right\}$ in $C$, then  we can obtain a set satisfying Claim 1, which contains only one path. We assume there exists no edge in $C$ between $\left\{w_{1}, w_{2}, w_{3}\right\}$. Let $s_{4}, s_{5}\in I\backslash\left\{s_{1}, s_{2}, s_{3}\right\}$. As each vertex in $ S^{\ast} $ has a neighbor in $D_{1}^{\ast}$, there exists a path of order $3$ in $G[V(D_{1}^{\ast})\cup\left\{s_{4}, s_{5}\right\}]$. By the $(P_{3}\cup 3P_{1})$-freeness, there exists an edge in $G$ between $\left\{w_{1}^{+}, w_{2}^{+}, w_{3}^{+}\right\}$ or $ \left(N_{G}(s_{4})\cup N_{G}(s_{5})\right)\cap \left\{w_{1}^{+}, w_{2}^{+}, w_{3}^{+}\right\}\neq \emptyset $. It is easy to obtain  a set of one path satisfying Claim 1.

	\noindent{\bf Case 2.} $ D_{2}^{\ast} $ is not hamiltonian.
	
	Lemma \ref{result 7}(ii) implies that $ \kappa(D_{2}^{\ast})\leq 3 $.  Let $ k=\kappa(D_{2}^{\ast}) $, $ W $ be a minimum cutset of $ D_{2}^{\ast} $, and $ l=w\left(D^{\ast}_{2}-W\right)$. Then $\left|W\right|\leq 3$ and $l\leq \alpha(D^{\ast}_{2})\leq 4 $.  
	
	We will prove that there exists a set $ A$ of at most 4 pairwise disjoint  paths covering all the vertices in $D^{\ast}_{2}-W$, with two ends of each path in $S^{\ast}$ and all the internal vertices in $D^{\ast}_{2}-W$.
	Since $\left|W\right|\leq 3$ and $\tau(G)\geq 12$, we have that $H=G-W$ is $ 12 $-tough.
	By $\kappa(H)\geq 24$ and Lemma \ref{result 6}, $H$ has a generalized $K_{1, 2}$-matching with centers as components of $H-S^{\ast}$. If each component of $D^{\ast}_{2}-W$ is complete, then clearly, $A$ exists. Thus we assume there exists a noncomplete component $D_{21}$ of $D^{\ast}_{2}-W$. Then by Lemma \ref{result 0}(i), $D_{21}$ is $(P_{3}\cup P_{1})$-free. Lemma \ref{result 0}(ii) implies that $w\left(D^{\ast}_{2}-W\right)=2$. Let $D_{22}$ be the other component of $D^{\ast}_{2}-W$. Then by Lemma \ref{result 0}(i), $D_{22}$ is complete. If $D_{21}$ is hamiltonian, then by a similar argument as Case 1, we can obtain a path covering all the vertices in $D_{21}$ with two ends $s_{1}, s_{2}\in S^{\ast}$ and all the internal vertices in $D_{21}$.  By $\kappa(H)\geq 24$, there exist two disjoint edges between $D_{22}$ and $S^{\ast}\backslash\left\{s_{1}, s_{2}\right\}$. So $A$ exists.	
	If $D_{21}$ is not hamiltonian, then  Theorem \ref{result 10} implies that $\tau(D)<1$. Let 
	$T$ be a tough set of $D_{21}$. By Lemma \ref{result 0}(ii), every component of $D_{21}-T$ is complete. As $\alpha(D_{21})\leq 3$ and $\tau(D_{21})<1$, we have $1\leq \left|T\right|<w\left(D_{21}-T\right)\leq \alpha(D_{21})\leq 3$. Let $H_{1}=G-\left(W\cup T\right).$  Then $ \tau(H_{1})\geq 9 $. By Corollary \ref{C2},
	 $H_{1}$ has a generalized $K_{1, 2}$-matching with centers as components of $D_{2}^{\ast}-(W\cup T)$ and partners in $S^{\ast}$. By Lemma 2.1(iv), we can obtain $w\left(D_{21}-T\right)-\left|T\right|+1$ pairwise disjoint paths covering all the vertices in $D^{\ast}_{2}-W$ with all ends in $S^{\ast}$ and all the internal vertices in $D_{2}^{\ast}-W$.

	Hence $\left|N_{S^{\ast}}(A)\right|\leq 8$. 	Let $z$ be an arbitrary vertex in $W$. If $z$ has at least $9$ neighbors in $D_{2}^{\ast}-W$, then as $\left|A\right|\leq 4$, we have that $z$ is adjacent to two consecutive vertices in some path of $A$, or by $\alpha (D_{2}^{\ast})\leq 4$, there exist two vertices $x_{1}, x_{2}\in N_{G}(z)$ in some path of $A$ such that $x_{1}^{+}x_{2}^{+}\in E(G)$. Now we can insert the vertex $z$ to some path of $A$.
	Otherwise, as $\delta(G)\geq 30$, $k\leq 3$ and $\left|N_{S^{\ast}}(A)\right|\leq 8$, this implies that for each $z\in W$, $z$ has  at least 9 neighbors in $S^{\ast}\backslash N_{S^{\ast}}(A)
	$. Let $\left\{x, y\right\}\subseteq N_{S^{\ast}}(z)\backslash N_{S^{\ast}}(A)$ and $P=xzy$. Then $A\cup\left\{P\right\} $ is a set covering all the  vertices in $G-(W\backslash\left\{w\right\})$. By the repetition of this argument on the vertices of $W$, we can obtain a set satisfying Claim 1, so $G$ is hamiltonian.
\end{proof}


Let $l=\alpha(D_{2}^{\ast})$. By Claim 2, we assume $ l\geq 5 $. Let $ I $ be a maximum independent  set of $ D_{2}^{\ast} $ and  $ U=\left\{x\in N_{D_{2}^{\ast}}(I): \left|N_{I}(x)\right|\geq 2\right\} $. By Lemma \ref{result 0} (iv), we have $ w(D_{2}^{\ast}-U) =w\left(D_{2}^{\ast}-N_{D_{2}^{\ast}(I)}\right)=l\geq 5$.  Let $ B=\left\{B_{1}, \cdots, B_{l}\right\} $ be the set of  components in $D^{\ast}_{2}-U$. By Lemma \ref{result 0}(ii), each component in $B$ is complete. For each $ i\in [1, l] $, let $ Q_{i} $ be a hamiltonian path of $ B_{i} $ with two ends $a_{i}$ and $b_{i}$. By $\kappa(G)\geq 30$, there exist two disjoint edges between $U$ and $S^{\ast}$, or there exist two disjoint edges between $B$ and $S^{\ast}$.   Let \( s_{1}, s_{2} \in S^{\ast} \) be the ends of two disjoint edges, either between \( U \) and \( S^{\ast} \) or between \( B \) and \( S^{\ast} \).
Let $ H=G-\left\{s_{1}, s_{2}\right\} $. Then $H$ is $12$-tough. 
Let $X$ be a maximum subset of $B$ such that $ H $ has a generalized $K_{1, 2}$-matching $M$ with centers as components in $X$ and all the partners in $S^{\ast}$. Let $Y=B\backslash X$ and $y=\left|Y\right|$.


\noindent{\bf Claim 3.} $\left|U\right|\geq 6y$.

\begin{proof}
By $l\geq 5$ and Lemma \ref{result 6}, there exists a $ K_{1, 12} $-matching $ M $ in $H$ with centers as the components of $B$ and partners in $ U\cup \left(S^{\ast}\backslash\left\{s_{1}, s_{2}\right\}\right)$. Let $G_{1}$ be a graph with $V(G_{1})=V(B)\cup (V(M)\cap \left(S^{\ast}\backslash\left\{s_{1}, s_{2}\right\}\right))$ and $E(G_{1})=E(M)\cup E(B)$. Let $X_{1}$ be the maximum subset of $B$ such that $G_{1}$ has a generalized $K_{1, 2}$-matching with centers as $X_{1}$, and $ Y_{1}=B_{1}\backslash X_{1} $. Then $\left|U\right|\geq 6\left|Y_{1}\right|$. 
By the maximality of $X$, we have $ \left|X_{1}\right|\leq \left|X\right| $ and then $\left|Y_{1}\right|\geq y$, so $ \left|U\right|\geq  6y$.
\end{proof}

We will construct a path $ P $ with two ends in $S^{\ast}$ and all the internal vertices in $D^{\ast}_{2}$ that covers all the vertices in at least $ y-2 $ components of $Y$ and all the verices in $U$. Moreover, since $G$ has a generalized $K_{1, 2}$-matching with centers as  components in $X$ and partners in $S^{\ast}$, we can obtain a cycle $C_{1}$ containing all the vertices in $V(D_{1}^{\ast})\cup V(X)\cup S^{\ast}$ by Claim 1.  Then insert the vertices in the remaining components of $Y$ to $C_{1}$. 
We consider two cases below regarding the number of disjoint edges between $D_{2}^{\ast}$ and $S^{\ast}$.

\noindent{\bf Case 1.} There exist 4 disjoint edges between $U$ and $S^{\ast}$.

Assume $ s_{1}, s_{2}\in S^{\ast} $ be two ends of two disjoint edges between $U$ and $ S^{\ast}$.

\noindent{\bf Claim 4.} In \( G[U] \), there exists a set of at most \( l \) pairwise disjoint paths covering all vertices in \( U \), with one path ending at a vertex adjacent to a vertex in \( S^{\ast} \) and another ending at a vertex adjacent to a distinct vertex in \( S^{\ast} \).

\begin{proof}
Let $ A $ be a minimum set of pairwise disjoint paths in $ G[U] $ covering all the vertices in $U$, and $k=\left|A\right|$. 
By Lemma \ref{result 9}, we have $k\leq l$. Let $y_{1}$ and $y_{2}$ be two distint vertices in $U$ such that $s_{1}y_{1}\in E(G)$ and $s_{2}y_{2}\in E(G)$.	
If $k\leq l-3$, then let $P_{i}$ be a path in $A$ containing $y_{i}$ and $e_{i}$ be an edge incident with $y_{i}$ in $P_{i}$ for each $i\in [1, 2]$. Now if $y_{i}$ is an internal vertex of $P_{i}$, then delete $e_{i}$ from $P_{i}$ and denote the  resulting path obtained from $ P_{i} $ by $P_{i}^{\prime}$, otherwise, let $P_{i}^{\prime}=P_{i}$. Then $\left\{P_{1}^{\prime}, P_{2}^{\prime}\right\}\cup \left(A\backslash 
\left\{P_{1}, P_{2}\right\}\right)$ is a desired set.  		
Thus assume $ 3\leq l-2\leq k\leq l $. By Lemma \ref{result 9}, there exists a set $U_{1}\subseteq U$ such that 
$ l-2\leq w\left(G[U]-U_{1}\right)-\left|U_{1}\right|\leq l$. This implies that $\left|U_{1}\right|\leq 2$. By the condition of Case 1, assume $\left\{y_{1}, y_{2}\right\}\subseteq U\backslash U_{1} $.
By Lemma \ref{result 0}(ii), each component of $G[U]-U_{1}$ is complete.  If $s_{1}$  and $s_{2}$ have neighbors in two distinct components of $G[U]-U_{1}$, then as each vertex in $U_{1}$ (if exists) is adjacent to all the vertices at least $w\left(G[U]-U_{1}\right)-1$ components of $ G[U]-U_{1} $ by Lemma \ref{result 0}(iv), we can obtain a desired set. Otherwise, 
$s_{1} $ and $s_{2}$ have neighbors in one common component of $G[U]-U_{1}$.  Lemma \ref{result 0}(iv) implies that $w\left(G[U]-U_{1}\right)=3$, so $U_{1}=\emptyset$.  Let $P_{1}$, $P_{2}$ and $P_{3}$ be  hamiltonian paths of 3 components  of $G[U]-U_{1}$, respectively. Without loss of generality, we assume $y_{1}$ and $y_{2}$ are two ends of $P_{1}$. Delete the edge in $P_{1}$ incident with $y_{1}$, and denote the set of two resulting paths by $A^{\prime}$. Then $A^{\prime}\cup \left(A\backslash \left\{P_{1}\right\}\right)$ is a desired set.
\end{proof}



Let $A_{1}$ be a minimum set satisfying Claim 4. 
Let $A_{1}=\left\{P_{1}, \dots , P_m\right\} $
$$ P_{1}=x_{1}\cdots x_{k_{1}},  P_{2}=x_{k_{1}+1} \cdots x_{k_{2}}, \dots,  P_{m}=x_{k_{m-1}+1} \dots  x_{k_{m}}.$$ Then $m\leq l$.  Assume $ x_{1}s_{1}, x_{k_{m}} s_{2}\in E(G) $,   $\left|V(P_{1})\right|\leq \left|V(P_{m})\right| $ and $\left|V(P_{2})\right|\leq \cdots \leq \left|V(P_{m-1})\right|$. 
If there exists a trivial path in $A_{1}$, then let $ j\in [2, m-1] $ such that $ \left|V(P_{j})\right|=1 $ and $ \left|V(P_{j+1})\right|\geq 2 $; otherwise, let $ j=0 $ and $ k_{j}=0 $. 

\noindent{\bf Claim 5.} If $l=m$, then there exists a set satisfying Claim 1.

\begin{proof}
	From the proof of Claim 4, we have $ w\left(G[U]\right)=m=l\geq 5 $.
	By Lemma \ref{result 0}(ii), each component of $G[U]$ is complete. Let $C_{1}, \dots, C_{m}$ be the components of $G[U]$. For each $i\in [1, m]$, assume $P_{i}$ is a hamiltonian path of $C_{i}$.  Lemma \ref{result 0}(iv) implies that the vertices in at least $ l-1 $ components of $B$ are adjacent to all the vertices in at least $ l-1 $ components of $G[U]$ and each vertex in $U$ is adjacent  to all the vertices in at leasr $l-1$ components of $B$, call this {\bf the adjacency condition 1.} And by Lemma \ref{result 0}(iii), each vertex in a component from $B$ that has a neighbor in $G[U]$ is adjacent to all the vertices in some component of $G[U]$, call this the {\bf adjacency condition 2.}

	By adjacency condition 1, for each $i\in [1, m-1]$, we may
	assume $ \left\{x_{k_{i}}, x_{k_{i}+1}\right\}\sim B_{i} $, and then connect two paths $P_{i}$ and $P_{i+1}$ by adding a path $x_{k_{i}}a_{i}Q_{i}b_{i}x_{k_{i}+1}$. Denote the resulting path by $P$.
	If $G\left[V(B_{l})\cup\left(S^{\ast}\backslash\left\{s_{1}, s_{2}\right\}\right)\right]$ has a generalized $K_{1, 2}$ centered at $B_{l}$, then let $s_{3}$ and $s_{4}$ be two partners of $B_{l}$. Without loss of generality, we ssume $s_{3}a_{l}, s_{4}b_{l}\in E(G)$. Then $\left\{P\right\}\cup \left\{s_{3}a_{l}Q_{l}b_{l}s_{4}\right\}$ is a desired set. Now we assume $G\left[V(B_{l})\cup\left(S^{\ast}\backslash\left\{s_{1}, s_{2}\right\}\right)\right]$ has no generalized $K_{1, 2}$ centered at $B_{l}$. By $\kappa(G)\geq 30$, $G\left[V(B_{l})\cup U\right]$ has a generalized $K_{1, 4}$ centered at $B_{l}$. Let $ \left\{B_{l}^{1}, B_{l}^{2}\right\}$ and $ \left\{T_{1}, T_{2}\right\}$ be two partitions of $B_{l}$ and four partners of $B_{l}$ such that $T_{i}\subseteq N_{G}(B_{l}^{i})$ for $i=1, 2$.   
Let	$T_{1}=\left\{y_{1}, y_{2}\right\}$ and $T_{2}= \left\{z_{1}, z_{2}\right\}$. If $4$ partners of $B_{l}$ are in a common component $C_{r}$ of $G[U]$, then since $C_{r}$ is complete, there exists a hamiltonian path $P_{r}^{\prime}$ of $C_{r}$ such that $x_{k_{r-1}+1}$ and $x_{k_{r}}$ are two ends of $P_{r}^{\prime}$ and $y_{i}z_{j}\in E(P_{r}^{\prime})$, where $y_{i}\in \left\{y_{1}, y_{2}\right\}$ and $z_{j}\in \left\{z_{1}, z_{2}\right\}$. Assume $y_{i}\sim a_{l}$ and $z_{j}\sim b_{l}$. Replace the edge $y_{i}z_{j}$ with the path $y_{i}a_{l}Q_{l}b_{l}z_{j}$. Denote by $P_{r}^{\prime\prime}$ the resulting path obtained from $P_{r}^{\prime}$.  Now replace the subpath $P_{r}$ of $P$ with $P_{r}^{\prime\prime}$. Denote by $P^{\ast}$ the resulting path obtained from $P$. Note that $\left\{P^{\ast}\right\}$ is a desired set. Otherwise, for each $i\in [1, l]$, $G\left[V(B_{i})\cup U\right]$ has a generalized $K_{1, 2}$ centered at $B_{l}$ with 2 partners in 2 distinct components of $G\left[W_{2}\right]$. If $E\left(V(B), S^{\ast}\backslash\left\{s_{1}, s_{2}\right\}\right)\neq\emptyset$, then let $s_{3}\in S^{\ast}\backslash\left\{s_{1}, s_{2}\right\}$ be a vertex adjacent to some vertex in $B$. By adjacency conditions 1 and 2, assume $\left\{s_{3}, x_{1}\right\}\sim B_{1}$, $\left\{x_{k_{i}}, x_{k_{i}+1}\right\}\sim B_{i+1}$ for each $i\in [1, l-1]$. Now join $s_{3}$ and $x_{1}$ to two ends $a_{1}$ and $b_{1}$ of $Q_{1}$, respectively. After that, for each $i\in [1, l-1]$, connect two paths $P_{i}$ and $P_{i+1}$ by adding a path $x_{k_{i}}a_{i+1}Q_{i+1}b_{i+1}x_{k_{i}+1}$. Denote the resulting path by $P^{\prime}$. Note that $\left\{s_{3}a_{1}Q_{1}b_{1}x_{1}P^{\prime}x_{k_{l}s_{2}}\right\}$ is a desired set. If $E\left(V(B), S^{\ast}\backslash\left\{s_{1}, s_{2}\right\}\right)=\emptyset$, then by $\kappa(G)\geq 15$, we have $\frac{\left|U\cup\left\{s_{1}, s_{2}\right\}\right|}{l+1}\geq 15$, so $\left|U\right|\geq 15l+13$. This implies that there exists a component $C_{j}$ of $G[U]$ containing at least 15 vertices. By adjacency conditions 1 and 2, we can assume $\left\{x_{k_{j}-1}, x_{k_{j}}\right\}\sim B_{1}$, 
	and $\left\{x_{k_{i}}, x_{k_{i}+1}\right\}\sim B_{i+1}$ for each $i\in [1, l-1]$. Now add a path $x_{k_{i}}a_{i+1}Q_{i+1}b_{i+1}x_{k_{i}+1}$ to connect two paths $P_{i}$ and $P_{k_{i+1}}$, and then replace the edge $x_{k_{j}-1}x_{k_{j}}$ with the path  $x_{k_{j}-1}a_{1}Q_{1}b_{1}x_{k_{j}}$. Denote the resulting path by $P^{\prime}$. Clearly, $\left\{s_{1}x_{1}P^{\prime}x_{k_{l}}s_{2}\right\}$ is a desired set.
\end{proof}

Now we assume $l\geq m+1$.  Let $ k=\max\left\{y, m+1\right\} $ and $ B^{\ast}=Y\cup X_{1} $, where $X_{1}\subseteq X$ with $\left|B^{\ast}\right|=\left|X_{1}\right|+\left|Y\right|=k$. 
Assume $ B^{\ast}=\left\{B_{1},\dots, B_{k}\right\} $, and $\left\{B_{1},\dots, B_{y-2}\right\}\subseteq Y$ if $y\geq 3$. We will construct a path covering all the vertices in $k-2$ components of $B^{\ast}$ and the vertices in $U$ through the following steps.

\noindent{\bf Step 1.}
If $ m=1 $, then let $ P^{\prime}=s_{1}x_{1}P_{1}x_{k_{1}}s_{2} $.
If $ m\geq 2 $, then by Lemma \ref{result 0}(iv), assume $ \left\{x_{k_{i}}, x_{k_{i}+1}\right\}\sim B_{i} $ for each $i\in [1, m-1]$. Now connect  two path $P_{i}$ and $P_{i+1}$ by adding a path $x_{k_{i}}a_{i}Q_{i}b_{i}x_{k_{i}+1}$. Denote the resulting  path by $P^{\prime}$. 

\noindent{\bf Step 2.} If $y\leq m+1$, then let $P=P^{\prime}$. If $y\geq m+2$, then let $a\in [1, k_{m}]$ such that $ \left|E(x_{1}P^{\prime}x_{a})\cap E(A_{1})\right|=y-m-1 $. Let $ E(x_{1}P^{\prime}x_{a})\cap E(A_{1})=\left\{e_{1}, \dots, e_{y-m-1}\right\} $. For each $i\in [1, y-m-1]$, let $y_{i}$ and $z_{i}$ be two ends of $e_{i}$, and assume $\left\{y_{i}, z_{i}\right\}\sim B_{m-1+i} $ by Lemma \ref{result 0}(iv). Replace each edge $e_{i}$ in $P^{\prime}$ with the path $y_{i}a_{m-1+i}Q_{m-1+i}b_{m-1+i}z_{i}$. Denote by $P$ the resulting path obtained from $P^{\prime}$.

By the construction of $B^{\ast}$, we have $B-B^{\ast}\subseteq X$. Since $H$ has a generalized $K_{1, 2}$-matching with centers as components in $X$ and partners in $S^{\ast}$. For each $i\in [k+1, l]$, let $u_{i}, v_{i}\in S^{\ast}$ be two partners of $B_{i}$. Since $B_{i}$ is complete, assume $u_{i}a_{i}, v_{i}b_{i}\in E(G)$. Let $P^{\prime}_{i}=u_{i}a_{i}Q_{i}b_{i}v_{i}$. In the following, we will prove that there exists a set satisfying Claim 1.

Note that there are exactly two components $ B_{k-1} $ and $B_{k}$ from $B^{\ast}$ not  covered by $P$. If $\left\{B_{k-1}, B_{k}\right\}\subseteq X$, then let $u_{i}, v_{i}\in S^{\ast}$ be two partners of $B_{i}$ for each $i\in \left\{k-1, k\right\}$. Note that $\left\{P\right\}\cup \left(\bigcup_{i=k-1}^{l} \left\{u_{i}a_{i}Q_{i}b_{i}v_{i}\right\}\right)$ is a desired set. Thus we assume $B_{k}\in Y$. Then $y\geq 1$. By the construction of $P$, let $p$ be the minimum number in $[1, k_{m}]$ such that each vertex of $ V(x_{p}Px_{k_{m}})\cap V(A_{1})$ is adjacent in $P$ to some vertex in at most one component from $Y$. Let $ z=\left|V(x_{p}Px_{k_{m}})\cap V(A_{1})\right| $.  We claim that $z\geq \lceil \frac{5y}{2}\rceil+1\geq 4$. By the construction of $P$, we consider the following cases.
If  $ y\leq j+1 $, then  $ z\geq \frac{k_{m}-\left((y-1)-1\right)}{2} $. If $ j+2\leq y\leq m+1 $, then  $z\geq \frac{k_{m}-j+1}{2} $. Now assume $ y\geq m+2 $. Recall that $\left|V(P_{1})\right|=k_{1}$. If $ 1\leq k_{1}\leq y-m-1 $, then $z=\left|E(A_{1})\right|-\left|E(x_{1}P^{\prime}x_{p})\cap E(A_{1})\right|+1\geq k_{m}-m-(y-m-1)+1.$ If $ k_{1}\geq y-m $, then as $ k_{1}\leq \left|V(P_{m})\right| $, we have $z\geq \frac{\left|E(A_{1})\right|}{2}+1\geq \frac{k_{m}-m}{2}+1.$ In each case, since $k_{m}\geq 6y$ by Claim 3, the claim holds.

Assume the drection of $P$ is from $x_{1}$ to $x_{k_{m}}$. Let $x_{i}\in V(A_{1})$. By the construction of $P$, we have that either $\left|V(x_{i} Px_{i+1})\right|=2$ or $P(x_{i}, x_{i+1})$ is a hamiltonian path of some component in $B^{\ast}$. If $P(x_{i}, x_{i+1})$ is a hamiltonian path of a component in $Y$, we call $ x_{i}Px_{i+1} $ a saturated segment. Otherwise, we call $ x_{i}Px_{i+1} $ a unsaturated segment.  Clearly, the following claim holds.
 


\noindent{\bf Claim 6.} Any two saturated segments in $x_{p-1}Px_{k_{m}}  $ contain no common vertices.

\noindent{\bf Case 1.1.} 
For each component $B^{\prime}\in\left\{B_{k-1}, B_{k}\right\}$, $G
\left[V(B^{\prime})\cup \left(\bigcup_{i=p}^{k_{m}}\left\{x_{i}\right\}\right)\right]$ has a generalized $K_{1, 2}$ centered at $B^{\prime}$.

We will prove that there exists a path $ P^{\ast} $ obtained from $P$ in $D^{\ast}_{2}$  that covers all the vertices in $V(Y)\cup U$ and has two ends $x_{1}$ and  $x_{k_{m}}$.
 Then $\left\{s_{1}x_{1}P^{\ast}x_{k_{m}}s_{2}\right\}\cup \left(\bigcup_{i=k+1}^{l} P_{i}^{\prime}\right)$ is a desired set.
 For any two vertices $x_{i}, x_{j}$ in $ A_{1} $, if $j=i+1$, we say that $x_{i}$ and $x_{j}$ are consecutive.

\noindent{\bf Case 1.1.1.}  For each component $B^{\prime}\in\left\{B_{k-1}, B_{k}\right\}$, $G
\left[V(B^{\prime})\cup \left(\bigcup_{i=p}^{k_{m}}\left\{x_{i}\right\}\right)\right]$ has a generalized $K_{1, 2}$ centered at $B^{\prime}$ with two consecutive partners.


Let $x_{i}$ and $x_{i+1}$ be two partners of $B_{k-1}$, and let $x_{j}$ and $x_{j+1}$ be two partners of $B_{k}$. Assume $x_{i}a_{k-1}$, $x_{i+1}b_{k-1}$, $x_{j}a_{k}$, $x_{j+1} b_{k}\in E(G)$. By $ z\geq 4 $ and Lemma \ref{result 0}(iv), we assume $i<j$, and $x_{i-1}\nsim B_{k-1}$ if $i-1\in [p, x_{k_{m}}]$. We consider the following possibilities. 

(i) Both $x_{i}Px_{i+1}$ and $x_{j}Px_{j+1}$ are unsaturated segements.  Replace two subpaths $x_{i}Px_{i+1}$ and $x_{j}Px_{j+1}$ of $P$ with $x_{i}a_{k-1}Q_{k-1}b_{k-1}x_{i+1}$ and $x_{j}a_{k}Q_{k}b_{k}x_{j+1}$, respectively. Denote the resulting path by $P^{\ast}$.

(ii) $x_{i}Px_{i+1}$ is saturated. Assume $V(P(x_{i}, x_{i+1}))\subseteq V(B_{i})$.  By Claim 5, $x_{i-1}Px_{i}$ is unsaturated, this implies that $i-1\geq p$. By our assumption that $x_{i-1}\nsim B_{k-1}$ and Lemma \ref{result 0}(iv), we have $x_{i-1}\sim B\backslash\left\{B_{k-1}\right\}$.  Replace two subpaths $x_{i-1}Px_{i}$ and $x_{i}Px_{i+1}$ of $P$ with $ x_{i-1}b_{i}Q_{i}a_{i}x_{i} $
and $x_{i}a_{k-1}Q_{k-1}b_{k-1}x_{i+1}$, respectively. When $x_{j}Px_{j+1}$ is unsaturated,  replace the path $x_{j}Px_{j+1}$ with $x_{j}a_{k}Q_{k}b_{k}x_{j+1}$.
When $x_{j}Px_{j+1}$ is saturated, assume $P\left(x_{j}, x_{j+1}\right)\subseteq  V(B_{j})$. By Claim 6, we have $j\geq i+2$ and $x_{j-1}Px_{j}$ is unsaturated. By Lemma \ref{result 0}(iv), we have $x_{j-1}\sim B_{k}$ or $B_{j}$. Without loss of generality, assume $x_{j}\sim B_{j}$. Replace two paths $x_{j-1}Px_{j}$ and $x_{j}Px_{j+1}$ with $ x_{j-1}b_{j}Q_{j}a_{j}x_{j} $
and $x_{j}a_{k}Q_{k}b_{k}x_{j+1}$, respectively. Denote the resulting path by $P^{\ast}$.

(iii) $x_{i}Px_{i+1}$ is unsaturated and $x_{j}Px_{j+1}$ is saturated. Assume $P(x_{j-1}, x_{j})\subseteq V(B_{j-1})$. Replace the subpath $x_{i}Px_{i+1}$ of $P$ with $x_{i}a_{k-1}Q_{k-1}b_{k-1}x_{i+1}$.  By Claim 5, we have $x_{j-1}Px_{j}$ and $x_{j+1}Px_{j+2}$ (if exists) are unsaturated. Assume $V\left(P(x_{j}, x_{j+1})\right)\subseteq V(B_{j})$. By Lemma \ref{result 0}(iv), we have $x_{j-1}\sim B_{j} \text{~or~} B_{k}$. Without loss of generality, assume $x_{j-1}\sim B_{j}$.
When $j\geq i+2$ or $j+1< k_{m}$, replace two paths $x_{j-1}Px_{j}$ or $x_{j+1}Px_{j+2}$, and $x_{j}Px_{j+1}$ with $x_{j-1}a_{j}Q_{j}b_{j}x_{j}$ or $x_{j+1}a_{j}Q_{j}b_{j}x_{j+2}$, and $x_{j}a_{k}Q_{k}b_{k}x_{j+1}$, respectively. When $j=i+1=k_{m}$, as $z\geq 4$, we have $i-1\geq p$. By our assumption that $x_{i-1}\nsim B_{k-1}$ and Lemma \ref{result 0}(iv), we have $ x_{i-1}\sim B\backslash\left\{B_{k-1}\right\} $, and $x_{i}\sim B_{k} \text{~or~} B_{j}$. By Claim 6, we have that $x_{i-2}Px_{i-1}$ or $x_{i-1}Px_{i}$ is unsaturated. Assume $x_{i-1}Px_{i}$ is unsaturated and $x_{i}\sim B_{j}$. Rplace the paths $x_{i-1}Px_{i}$ and $x_{j}Px_{j+1}$ with $x_{i-1}a_{j}Qb_{j}x_{i}$ and $x_{j}a_{k}Qb_{k}x_{j+1}$, repectively. Denote the resulting path by $P^{\ast}$.

\noindent{\bf Case 1.1.2.}  There exists a component $B^{\prime}\in \left\{B_{k-1}, B_{k}\right\}$ such that two partners of each generalized $K_{1, 2}$ centered at $B^{\prime}$ in $G
\left[V(B^{\prime})\cup \left(\bigcup_{i=p}^{k_{m}}\left\{x_{i}\right\}\right)\right]$ are not consecutive. 

Assume $B^{\prime}=B_{k-1}$.
Let $x_{i}$ and $x_{j}$ be two partners of a generalized $K_{1, 2}$ centered at $B_{k-1}$ in $G
\left[V(B^{\prime})\cup \left(\bigcup_{i=p}^{k_{m}}\left\{x_{i}\right\}\right)\right]$ such that each vertex in $x_{i}Px_{j}$ has a nonneighbor in $B_{k-1}$ and so is adjacent to all the vertices in $B\backslash\left\{B_{k-1}\right\}$ by Lemma \ref{result 0}(iv).
Without loss of generality, assume $i<j$. By the condition of Case 1.1.2,  if $i-1\geq p$, we have $x_{i-1}\nsim B_{k-1}$. 

If $x_{i-1}\sim B_{k-1}$, then $i-1<p$, this implies that $x_{i-1}Px_{i}$ is saturated. Assume $V\left(P(x_{i-1}, x_{i})\right)\subseteq V(B_{i-1})$. By Claim 6, $x_{i}Px_{i+1}$ is unsaturated. As $z\geq 4$, we have $j\geq i+3$ or $j<k_{m}$. If $j\geq i+3$, then replace the paths $x_{i-1}Px_{i}$ and $x_{i}Px_{i+1}$ with $x_{i-1}a_{k-1}Q_{k-1}b_{k-1}x_{i}$ and $x_{i}a_{i-1}Q_{i-1}b_{i-1}x_{i+1}$, respectively.  When $x_{j-2}Px_{j-1}$ is saturated, assume $V\left\{P(x_{j-2}, x_{j-1})\right\}\subseteq V(B_{j-2})$. By Lemma \ref{result 0}(iv), $x_{j}\in B_{k}$ or $ B_{j-2}$. Assume $x_{j}\sim B_{j-2}$. Replace the paths $x_{j-2}Px_{j-1}$ and   $x_{j-1}Px_{j}$ with $ x_{j-2}a_{k}Q_{k}b_{k}x_{j-1} $ and $ x_{j-1}a_{j-2}Q_{j-2}b_{j-2}x_{j} $, respectively. If $j<k_{m}$, then by the condition of Case 1.1.2 and Lemma \ref{result 0}(iv), we have $x_{j+1}\sim B\backslash\left\{B_{k-1}\right\}$. Let $Q^{\prime}=x_{i}a_{k-1}Q_{k-1}b_{k-1}x_{j}Px_{i+1}a_{k}Q_{k}b_{k}x_{j+1}$. Replace the path $x_{i}Px_{j+1}$ with $x_{i}Q^{\prime}x_{j+1}$. Moreover, if $x_{j}Px_{j+1}$ is saturated,  assume $V\left(P(x_{j}, x_{j+1})\right)\subseteq V(B_{j})$. By Claim 6, $x_{j-1}Px_{j}$ is unsaturated. Replace the path $x_{j-1}Px_{j}$ with $x_{j-1}a_{j}Q_{j}b_{j}x_{j}$. 
Denote the resulting path by $P^{\ast}$.

If $x_{i-1}\nsim B_{k-1}$, then by Lemma \ref{result 0}(iv), we have $x_{i-1}\sim B\backslash\left\{B_{k-1}\right\}$. Let $ Q=x_{i-1}a_{k}Q_{k}b_{k}$\\$x_{j-1}Px_{i}a_{k-1}Q_{k-1}b_{k-1}x_{j} .$  Replace the path $x_{i-1}Px_{j}$ with $x_{i-1}Qx_{j}$.
Firstly, we assume $i-1\geq p$. If $x_{i-1}Px_{i}$ is saturated, the $x_{i-2}Px_{i-1}$ and $x_{i}Px_{i+1}$ are unsaturated. Assume $P(x_{i-1}, x_{i})$ is a hamiltonian path of $ B_{i-1}$.
Relace the path $x_{i}Px_{i+1}$ with $x_{i}a_{i-1}Q_{i-1}b_{i-1}x_{i+1}$.
 When $x_{j-1}Px_{j}$ is saturated, assume $P(x_{j-1}, x_{j})\subseteq V(B_{j-1})$. By Lemma \ref{result 0}(iv), $x_{i-2}\sim B_{k} \text{~or~} B_{j-1}$. Assume $x_{i-2}\sim B_{j-1}$. Replace the path $x_{i-2}Px_{i-1}$ with $x_{i-2}a_{j-1}Q_{j-1}b_{j-1}x_{i-1}$. 
Now we assume $i-1< p$. Then $x_{i-1}Px_{i}$ is saturated. Replace the path $x_{i}Px_{i+1}$ with $x_{i}a_{i-1}Q_{i-1}b_{i-1}x_{i+1}$.  As $z\geq 4$, we have $j\geq i+3$ or $j<k_{m}$. If $j\geq i+3$ and $x_{j-1}Px_{j}$ is saturated,  assume $P(x_{j-1}, x_{j})\subseteq V(B_{j-1})$.
 By Claim 6, we have $x_{j-2}Px_{j-1}$ is unsaturated. Replace the path $x_{j-2}Px_{j-1}$ with $x_{j-2}a_{j-1}Q_{j-1}b_{j-1}x_{j-1}$. If $j<k_{m}$, then by  the condition of Case 1.1.2 and Lemma \ref{result 0}(iv), we have $x_{j+1}\sim B\backslash\left\{B_{k-1}\right\}$. When $x_{j-1}Px_{j}$ is saturated,  assume $P(x_{j-1}, x_{j})\subseteq V(B_{j-1})$.  By Claim 6, $x_{j}Px_{j+1}$ is unsaturated. Replace the path $x_{j}Px_{j+1}$ with $x_{j}a_{j-1}Q_{j-1}b_{j-1}x_{j+1}$. 
Denote the resulting path by $P^{\ast}$.

\noindent{\bf Case 1.2.} There exists a component $B^{\prime}\in\left\{B_{k-1}, B_{k}\right\}$, such that $G
\left[V(B^{\prime})\cup \left(\bigcup_{i=p}^{k_{m}}\left\{x_{i}\right\}\right)\right]$ has no generalized $K_{1, 2}$ centered at $B^{\prime}$.

We assume $B^{\prime}=B_{k}$.  By Lemma \ref{result 0}(iv), there exist at least $z-1\geq 3$ vertices in $\bigcup_{i=p}^{k}\left\{x_{i}\right\}$ adjacent to all the vertices in $B\backslash\left\{B_{k}\right\}$. This implies that for each $i\in [1, l]\backslash\left\{k\right\}$,  $G\left[V(B_{i})\cup \left(\bigcup_{i=p}^{k}\left\{x_{i}\right\}\right)\right]$ has a generalized $K_{1, 2}$ centered at $B_{k-1}$ with two consecutive partners in $\bigcup_{i=p}^{k}\left\{x_{i}\right\}$. Let $ x_{j} $ and $x_{j+1}$ be two consecutive partners of $B_{k-1}$. By a similar argument as Case 1.1.1, we can obtain a path $P^{\ast}$ with two ends $s_{1}$ and $s_{2}$ such that $V(P^{\ast})=V(P)\cup V(B_{k-1})$. 
If $G
\left[V(B_{k})\cup \left(\bigcup_{i=1}^{p}\left\{x_{i}\right\}\right)\right]$ has a generalized $K_{1, 2}$ centered at $B_{k}$,  then let $x_{q}, x_{r}\in \bigcup_{i=1}^{p}\left\{x_{i}\right\} $ be two partners  of a generalized $K_{1, 2}$ centered at $ B_{k} $ such that $ r-q $ is minimum. Let  $Q=s_{1}x_{1}Px_{q}x_{k}Q_{k}x_{k}Px_{q+1}b_{r}Q_{r}a_{r}x_{k+1}Ps_{2}$. If $x_{q}Px_{q+1}$ is saturated, then assume $ V(x_{q}Px_{q+1})\subseteq B_{q}$.  By a similar argument as Case 1.1, we can construct a path $Q^{\ast}$ obtained from $P$ such that $s_{1}$ and $s_{2}$ are two ends of $Q^{\ast}$ and  $V(Q^{\ast})=V(P)\cup V(B_{k-1})\cup V(B_{k})$. Note that $\left\{Q^{\ast}\right\}\cup \left(\bigcup_{i=k+1}^{l}\left\{P_{i}^{\prime}\right\}\right)$ is a 
set satisfying Claim 1. Otherwise, by $\kappa(G)\geq 30$, $G\left[V(B_{k})\cup S^{\ast}\backslash\left\{s_{1}, s_{2}\right\}\right]$ has a generalized $K_{1, 2}$ centered at $ B_{k} $. Let $s_{3}$ and $s_{4}$ be two partners of $B_{k}$. Assume $s_{3}\sim a_{k}$ and $s_{4}\sim b_{k}$. Let $ P_{k}^{\prime}=s_{3}a_{k}Q_{k}b_{k}s_{4} $ and  $U=\left\{u_{k+1}, v_{k+1}, \dots, u_{l}, v_{l}\right\}$.  If $ \left\{s_{3}, s_{4}\right\}\cap U=\emptyset $, then $\left\{P^{\ast}\right\}\cup \left(\bigcup_{i=k+1}^{l}\left\{P_{i}^{\prime}\right\}\right)\cup P_{k}^{\prime}$ is a desired set. If $\left|\left\{s_{3}, s_{4}\right\}\cap U\right|=1 $, then assume $ s_{4}=u_{k+1}$, hence  $\left\{P^{\ast}\right\}\cup \left(\bigcup_{i=k+1}^{l}\left\{P_{i}^{\prime}\right\}\right)\cup \left\{s_{3}P_{k}^{\prime}u_{k+1}P_{k+1}^{\prime}v_{k+1}\right\}$ is a desired set.
If $s_{3}$ and $s_{4}$ are partners of one component $B_{r}\in B\backslash B^{\ast}$, then replace the role of $B_{k}$ in Case 1.1 with $B_{r}$, we can obtain a desired set. If $s_{3}$ and $s_{4}$ are partners of two components in $ B\backslash B^{\ast}$, then assume $s_{3}=v_{k+1}$ and $s_{4}=u_{k+2}$, hence $\left\{P^{\ast}\right\}\cup \left(\bigcup_{i=k+3}^{l}\left\{P_{i}^{\prime}\right\}\right)\cup \left\{u_{k+1}P_{k+1}^{\prime}v_{k+1}P_{k}^{\prime}u_{k+2}P_{k+1}^{\prime}v_{k+2}\right\}$ is a desired set.

\noindent{\bf Case 2.} There exists no two disjoint edges between $ U$ and $S^{\ast}$.

$\kappa(G)\geq 30$ implies that there exist two disjoint edges between $S^{\ast}$ and $B$.  Assume $ s_{1}$ and $s_{1} $ are two vertices in $S^{\ast}$ adjacent to vertices in $B$. By Lemma \ref{result 9}, there exists a set $A_{1}$ of at most $l$ disjoint paths in $G[U]$ covering all the vertices in $U_{1}$.
  Let $ A_{1}=\left\{P_{1}, \dots, P_{m}\right\} $, and
$$ P_{1}=x_{1} \cdots x_{k_{1}},  P_{2}=x_{k_{1}+1} \cdots x_{k_{2}} , \dots, P_{m}=x_{k_{m-1}+1} \cdots x_{k_{m}} .$$ Assume  $\left|V(P_{1})\right|\leq \cdots \leq \left|V(P_{m})\right|$. If there exists a trivial path,  let $ j\in [1, m-1] $ such that $ \left|V(P_{j})\right|=1 $ and $ \left|V(P_{j+1})\right|\geq 2 $.
We will construct a path $ P $ by applying the following operations.

\begin{enumerate}[(1)]
\item 
Assume $ l\geq m+3 $. 
Let $ k=\max\left\{5, y, m+3\right\} $ and $ B^{\ast}=Y\cup X_{1} $, where $X_{1}\subseteq X$ with $\left|X_{1}\right|+\left|Y\right|=k$. Assume $ B^{\ast}=\left\{B_{1},\dots, B_{k}\right\} $.
By Lemma \ref{result 0}(iv), assume $ \left\{s_{1}, x_{1}\right\}\sim B_{1} $ and $ \left\{s_{2}, x_{k_{m}}\right\}\sim B_{k-2} $.

\begin{enumerate}[(a)]
\item Assume $m=1$. Let $ P^{\prime}=s_{1}a_{1}Q_{1}b_{1}x_{1}P_{1}x_{k_{1}}a_{k-2}Q_{k-2}b_{k-2}s_{2} $.
If $y\leq 2$, then let $P=P^{\prime}$. If $3\leq y\leq 4$, then  by Lemma \ref{result 0}(iv), assume $\left\{x_{1}, x_{2}\right\}\sim B_{2}$. Replace the edge $x_{1}x_{2}$ in $P^{\prime}$ with the path $x_{1}a_{2}Q_{2}b_{2}x_{2}$. If $y\geq 5$, then by Lemma \ref{result 0}(iv), assume $\left\{x_{i}, x_{i+1}\right\}\sim B_{i+1}$ for each $i\in [1, y-m-3]$. Replace each edge $x_{i}x_{i+1}$ in $P^{\prime}$ with the path $x_{i}a_{i+1}Q_{i+1}b_{i+1}x_{i+1}$. In each case, denote by $P$ the resulting path obtained from $P^{\prime}$.

\item If $ m\geq 2 $, then  for each $i\in [1, k-3]$, assume $ \left\{x_{k_{i}}, x_{k_{i}+1}\right\}\sim B_{i+1} $ by Lemma \ref{result 0}(iv). Then connect two paths $P_{i}$ and $P_{i+1}$ by adding a path $x_{k_{i}}a_{i+1}Q_{i+1}b_{i+1}x_{k_{i}+1}$.  Denote the resulting  path by $P^{\prime}$. Let $P^{\prime\prime}=s_{1}a_{1}Q_{1}b_{1}x_{1}P^{\prime}x_{k_{m}}a_{k-2}Q_{k-2}b_{k-2}s_{2}$. If $ y\leq m+3 $, then let $P=P^{\prime\prime}$. If $y\geq m+4$, then let  $ a\in [j+1, k_{m}] $ such that $ \left|E(x_{k_{1}}P^{\prime\prime}x_{a})\cap E(A_{1})\right|=y-m-3 $. and let $ E(x_{k_{1}}P^{\prime\prime}x_{a})\cap E(A_{1})=\left\{e_{1}, \cdots, e_{y-m-3}\right\} $. 
 For each $i\in [1, y-m-3]$, let $y_{i}$ and $z_{i}$ be two ends of $e_{i}$, and assume $\left\{y_{i}, z_{i}\right\}\sim B_{m+i} $ by Lemma \ref{result 0}(iv). Replace each edge $e_{i}$ in $P^{\prime\prime}$ with $y_{i}a_{m+i}Q_{m+i}b_{m+i}z_{i}$. Denote by $P$  the resulting path obtained from $P^{\prime\prime}$.
\end{enumerate}


Note that the vertices in at most three components of $B^{\ast}$
are not covered by $  P$, of which at most two are in $Y$. Assume $B_{k}\in Y$ is not covered by $P$. Then $y\geq 1$. Let $p$ the minimum number in $[1, k_{m}]$ such that each vertex in $V(x_{p}Px_{k_{m}})$ is adjacent to at most one component of $Y$. Let $z=\left|V(x_{p}P^{\prime}x_{k_{m}})\cap V(A_{1})\right|$. We claim that $z\geq 5y+1\geq 6$. By the construction of $P$, we consider the following cases.
If  $ y\leq j+1 $, then  $ z\geq k_{m}-(y-1) $. If $ j+2\leq y\leq m+3 $, then  $z\geq k_{m}-j+1\geq k_{m}-y+3 $. If $ y\geq m+4 $, then $z=\left|E(A_{1})\right|-\left|E(x_{1}P^{\prime}x_{p})\cap E(A_{1})\right|+1\geq k_{m}-m-(y-m-3)+1.$  By Claim 3, we have $k_{m}\geq 6y$. So in each case, the claim holds.
By a similar argument as Case 1, we can obtain a desired set.



\item Assume $3\leq l-2\leq m $. By Lemma \ref{result 9}, there exists a cutset $ U_{1} $ of $G[U]$ such that $ m\leq w\left(G[U]-U_{1}\right)-\left|U_{1}\right|\leq l $. Let $ m_{1}=w\left(G[U]-U_{1}\right) $. By Lemma \ref{result 0}(iv), the vertices in at least $ l-1 $ components of $D_{2}^{\ast}-U$ are adjacent to all the vertices in at least $ m_{1}-1 $ components of $G[U]-U_{1}$. Let $ B^{\ast}=B $. 

\begin{enumerate}[(a)]
		\item If $ m_{1}=l $, then $ \left|U_{1}\right|\leq 2 $. Let $W$ be a set of vertices in $B$ adjacent to vertices in at most one component of $G[U]-U_{1}$. By Lemma \ref{result 0}(iv), we can obtain $w\left(D_{2}^{\ast}-\left(\left(V(B)\backslash W\right)\cup U_{1}\right)\right)\geq l$.	
	Replacing the role of $B$ and $G[U]$ in Case 1 with $G[\left(U\backslash U_{1}\right)\cup W]$ and $G[\left(V(B)\backslash W\right)\cup U_{1}]$, we can obtain a desired set. 
	
	\item If $ m_{1}=l-2 $, then $ U_{1}=\emptyset $.  Assume $ \left\{x_{1}, s_{1}\right\}\sim B_{1} $, $\left\{s_{2}, x_{k_{m}}\right\}\sim B_{l-1}$ and $\left\{x_{k_{i}}, x_{{k_{i}}+1}\right\}\sim B_{i}$ for each $i\in [1, l-2]$.  
	Let 	$ P^{\prime}=s_{1}a_{1}Q_{1}b_{1}x_{1}P_{1}x_{k_{1}} a_{2}Q_{2}b_{2}x_{k_{1}+1}\cdots$\\$ x_{k_{m}}a_{l-1}Q_{l-1}b_{l-1}s_{2}.$ There exist exactly 1 component of $B$ not covered by $P^{\prime}$. By a similar argument as Claim 5, we can obtain a set satisfying Claim 1.
	

	\item If $ m_{1}=l-1 $, then $ \left|U_{1}\right|\leq 1 $.
	If for each component $B_{i}$ of $B$, $G[V(B_{i})\cup U]$ has a generalized $K_{1, 2}$ centered at $B_{i}$ with two partners in two distinct components of   $G[U]-U_{1}$, then assume $ \left\{x_{1}, s_{1}\right\}\sim B_{1} $, $\left\{s_{2}, x_{k_{m}}\right\}\sim B_{l}$ and $\left\{x_{k_{i}}, x_{{k_{i}}+1}\right\}\sim B_{i}$ for each $i\in [1, l-1]$. Let 	$ P^{\prime}=s_{1}a_{1}Q_{1}b_{1}x_{1}P_{1}x_{k_{1}}a_{2}Q_{2}b_{2}x_{k_{1}+1}\cdots x_{k_{m}}a_{l}Q_{l}b_{l}s_{2}.$ Note that $P^{\prime}$ covers all the vertices in $B$. 	If $U_{1}\neq \emptyset$, then by Lemma \ref{result 1}(iv), we can insert the vertices in $U_{1}$ to $P^{\prime}$. Deonte the result path by $P$. Note that $\left\{P\right\}$ is a desired set.
	
	
Now we assume $G[V(B_{l})\cup U]$ has no generalized $K_{1, 2}$ centered at $B_{l}$ with two partners in two distinct components of $G[U]-U_{1}$. Then there exists a vertex $x\in V(B_{l})$ such that $x$ has neighbors in at most one component of $G[U]-U_{1}$. Assume $N_{U}(x)\subseteq V(A_{l-1})\cup U_{1}$. By Lemma \ref{result 0}(iv), the vertices in $B\backslash\left\{B_{l}\right\}$ are adjacent to all the vertices in $ U\backslash \left(V(A_{l-1})\cup U_{1}\right) $.  Assume $ \left\{x_{1}, s_{1}\right\}\sim B_{1} $, $\left\{s_{2}, x_{k_{m}}\right\}\sim D_{l-1}$ and $\left\{x_{k_{i}}, x_{{k_{i}}+1}\right\}\sim B_{i}$ for each $i\in [1, l-2]$. Let 	$ P^{\prime}=s_{1}a_{1}Q_{1}b_{1}x_{1}P_{1}x_{k_{1}}a_{2}Q_{2}b_{2}x_{k_{1}+1}\cdots$\\$ x_{k_{m_{1}}}a_{l-1}Q_{l-1}b_{l-1}s_{2}.$ Note that $P^{\prime}$ covers all the vertices in $B\backslash\left\{B_{l}\right\}$ and $U\backslash U_{1}$.
By a similarly argument as Case 1, we can obtain a desired set. \qed
\end{enumerate}	
\end{enumerate}

\end{document}